\def\arXiv{1} 
\newcommand{\notarxiv}[1]{foo}
\newcommand{\arxiv}[1]{ba}
\renewcommand{\arxiv}[1]{#1}%
\renewcommand{\notarxiv}[1]{\ignorespaces}%
\renewcommand{\arxiv}[1]{\ignorespaces}%
\renewcommand{\notarxiv}[1]{#1}%
\theoremstyle{plain}
\newtheorem{theorem}{Theorem}[section]
\newtheorem{proposition}[theorem]{Proposition}
\newtheorem{lemma}[theorem]{Lemma}
\newtheorem{corollary}[theorem]{Corollary}
\theoremstyle{definition}
\newtheorem{definition}[theorem]{Definition}
\theoremstyle{remark}
\newtheorem{remark}[theorem]{Remark}}
\theoremstyle{plain}
\declaretheorem[name=Lemma,sibling=theorem]{lemma}
\declaretheorem[name=Proposition,sibling=theorem]{proposition}
\declaretheorem[name=Corollary,sibling=theorem]{corollary}
\declaretheorem[name=Definition,sibling=theorem]{definition}
	\theoremstyle{plain}
	\newtheorem{theorem}{Theorem}
	\newtheorem{lemma}{Lemma}
	\newtheorem{proposition}{Proposition}
	\newtheorem{corollary}{Corollary}
	\newtheorem{definition}{Definition}
	\theoremstyle{definition}
	\newtheorem*{example*}{Example}
	\newenvironment{psketch}{\noindent{\emph{Proof sketch.}}}{\qed\bigskip}
\long\def\@makecaption#1#2{
	\vskip 0.8ex
	\setbox\@tempboxa\hbox{\small {\bf #1:} #2}
	\parindent 1.5em  %
	\dimen0=\hsize
	\advance\dimen0 by -3em
	\ifdim \wd\@tempboxa >\dimen0
	\hbox to \hsize{
		\parindent 0em
		\hfil 
		\parbox{\dimen0}{\def\baselinestretch{0.96}\small
			{\bf #1.} #2
		} 
		\hfil}
	\else \hbox to \hsize{\hfil \box\@tempboxa \hfil}
	\fi
}
\DeclarePairedDelimiter{\brk}{[}{]}
\DeclarePairedDelimiter{\crl}{\{}{\}}
\DeclarePairedDelimiter{\prn}{(}{)}
\newcommand{\Par}[1]{\left(#1\right)}
\newcommand{\norm}[1]{\left\lVert#1\right\rVert}
\newcommand{\inprod}[2]{\left\langle#1, #2\right\rangle}
\newcommand{\inner}[2]{\left<#1,#2\right>}
\newcommand{\E}{\mathbb{E}} %
\renewcommand{\P}{\mathbb{P}} %
\newcommand{\geoRV}{\mathsf{Geom}}
\NewDocumentCommand\Ex{s O{} m }{%
	\mathbb{E}%
	\begingroup
	\IfBooleanTF{#1}
	{\ExInn*{#3}}
	{\ExInn[#2]{#3}}%
	\endgroup
}
\DeclarePairedDelimiterX\ExInn[1]{[}{]}{%
	\activatebar
	#1%
}
\RenewDocumentCommand\Pr{sO{}r()}{%
	\mathbb{P}%
	\begingroup
	\IfBooleanTF{#1}
	{\PrInn*{#3}}
	{\PrInn[#2]{#3}}%
	\endgroup
}
\DeclarePairedDelimiterX\PrInn[1](){%
	\activatebar
	#1%
}
\newcommand{\activatebar}{%
	\begingroup\lccode`~=`|
	\lowercase{\endgroup\def~}{\;\delimsize\vert\;}%
	\mathcode`|=\string"8000
}
\newcommand{\xset}{\mathcal{X}}
\newcommand{\yset}{\mathcal{Y}}
\newcommand{\R}{\mathbb{R}}
\newcommand{\bx}{{\bar{x}}}
\newcommand{\tx}{{\widetilde{x}}}
\newcommand{\xopt}{x^\star}
\newcommand{\yopt}{y^\star}
\newcommand{\xinit}{x_{\mathrm{init}}}
\newcommand{\yinit}{y_{\mathrm{init}}}
\newcommand{\xprev}{x_{\mathrm{prev}}}
\newcommand{\xfull}{x_{\mathrm{full}}}
\newcommand{\ybr}{y^{\mathsf{br}}}
\newcommand{\proj}{\mathsf{Proj}}
\DeclareMathOperator*{\argmin}{arg\,min}
\DeclareMathOperator*{\argmax}{arg\,max}
\DeclareMathOperator*{\minimize}{minimize}
\newcommand{\ouralg}{\textsc{RECAPP}\ignorespaces}
\newcommand{\ApproxProx}{\textsc{ApproxProx}}
\newcommand{\UnbiasedProx}{\textsc{UnbiasedProx}}
\newcommand{\MLMC}{\textsc{MLMC}}
\newcommand{\MirrorProx}{\textsc{MirrorProx}}
\newcommand{\SVRGone}{\textsc{One}\textsc{Epoch}\textsc{Svrg}}
\newcommand{\SVRG}{\textsc{Svrg}}
\newcommand{\WarmStart}{\textsc{WarmStart}}
\newcommand{\fun}{\Phi}
\newcommand{\Flam}{F^{\lambda}}
\newcommand{\Veu}{V^\mathrm{e}}
\newcommand{\eps}{\epsilon}
\newcommand{\grad}{\nabla}
\newcommand{\brorcl}{\mathcal{O}^\mathsf{br}}
\newcommand{\calN}{\mathcal{N}}
\newcommand{\gopt}{g^\star}
\newcommand{\Otil}[1]{\widetilde{O}\prn*{#1}}
\newcommand{\codeStyle}[1]{{\bfseries #1} }
\newcommand{\codeInput}{\codeStyle{Input:}}	
\newcommand{\codeOutput}{\codeStyle{Outut:}}	
\newcommand{\codeReturn}{\codeStyle{Return:}}	
\newcommand{\codeParameter}{\codeStyle{Parameter:}}
\newcommand{\defeq}{:=}
\icmltitlerunning{\ouralg: Crafting a More Efficient Catalyst for Convex Optimization}
\begin{document}

\twocolumn[
\icmltitle{\ouralg: Crafting a More Efficient Catalyst for Convex Optimization}

\begin{icmlauthorlist}
\icmlauthor{Yair Carmon}{ta}
\icmlauthor{Arun Jambulapati}{s}
\icmlauthor{Yujia Jin}{s}
\icmlauthor{Aaron Sidford}{s}
\end{icmlauthorlist}

\icmlaffiliation{s}{Stanford University}
\icmlaffiliation{ta}{Tel Aviv University}

\icmlcorrespondingauthor{Yujia Jin}{yujiajin@stanford.edu}

\icmlkeywords{Machine Learning, ICML}

\vskip 0.3in
]

\printAffiliationsAndNotice{} %
}

\arxiv{
	\title{\ouralg: Crafting a More Efficient Catalyst\\ for Convex Optimization}

	\author{Yair Carmon ~~~ Arun Jambulapati ~~~ Yujia Jin ~~~ Aaron Sidford\\
	\href{mailto:ycarmon@tauex.tau.ac.il}{\texttt{ycarmon@tauex.tau.ac.il}}, 
	\texttt{\{\href{mailto:jmblpati@stanford.edu}{jmblpati},%
		\href{mailto:yujiajin@stanford.edu}{yujiajin},%
		\href{mailto:sidford@stanford.edu}{sidford}\}@stanford.edu}}
	\date{}
	
	\begin{document}
	
	\maketitle
}

\begin{abstract}
The accelerated proximal point algorithm (APPA), also known as ``Catalyst'', is a well-established reduction from convex optimization to approximate proximal point computation (i.e., regularized minimization). This reduction is conceptually elegant and yields strong convergence rate guarantees. However, these rates feature an extraneous logarithmic term arising from the need to compute each proximal point to high accuracy. In this work, we propose a novel Relaxed Error Criterion for Accelerated Proximal Point ($\ouralg$) that eliminates the need for high accuracy subproblem solutions. We apply $\ouralg$ to two canonical problems: finite-sum and max-structured minimization. For finite-sum problems, we match the best known complexity, previously obtained by carefully-designed problem-specific algorithms. For minimizing $\max_y f(x,y)$ where $f$ is convex in $x$ and strongly-concave in $y$, we improve on the best known (Catalyst-based) bound by a logarithmic factor.

\end{abstract}

\section{Introduction}\label{sec:intro}

\newcommand{\prox}{\mathrm{prox}_{F,\lambda}}

A fundamental approach to optimization algorithm design is to break down the problem of minimizing $F:\xset \to \R$ into a sequence of easier optimization problems, whose solution converges to $\xopt\in\argmin_{x\in \xset} F(x)$. A canonical ``easier''  problem is proximal point computation, i.e., computing
\begin{equation*}
	\prox(x) = \argmin_{x'\in\xset} \crl*{ F(x') +  \frac{\lambda}{2} \norm{x'-x}^2}\,.
\end{equation*}
Here  $\lambda > 0$ is a regularization parameter which balances between how close computing $\prox$ is to minimizing $F$, and how easy it is to compute---as $\lambda$ decreases  $\prox(x)$ tends to the true minimizer but becomes harder to compute.

The classical proximal point method~\citep{rockafellar1976monotone,parikh2014proximal} simply iterates $x_{t+1} = \prox(x_t)$, and (for convex $F$) minimizes $F$ with rate $F(x_{T}) - F(\xopt) = O\prn[\big]{\frac{\lambda R^2}{T}}$ for $R \defeq \norm{x_0 - \xopt}$. This rate can be improved, at essentially no additional cost, by carefully combining proximal steps, i.e. $\prox(x_t)$, with gradient steps using $\grad F(\prox(x_t)) = \lambda( x_t -  \prox(x_t) )$~\cite{guler1992new}. This accelerated method converges with rate $O\prn[\big]{\frac{\lambda R^2}{T^2}}$, a quadratic improvement over proximal point.

To turn this conceptual acceleration scheme into a practical algorithm, one must prescribe the accuracy to which each proximal point needs to be computed. \citet{guler1992new,salzo2012inexact} provided such conditions, which were later refined in independently-proposed accelerated approximate proximal point algorithm (APPA)~\citep{frostig2015regularizing} and Catalyst~\citep{lin2015universal,lin2017catalyst}. Furthermore, APPA/Catalyst obtained global convergence guarantees for concrete problems by using linearly convergent algorithms to compute the proximal points to their required accuracy. The APPA/Catalyst framework has since been used to accelerate full-batch gradient descent, coordinate methods, finite-sum variance reduction ~\citep{frostig2015regularizing,lin2017catalyst}, eigenvalue problems~\citep{garber2016faster}, min-max problems~\citep{yang2020catalyst}, and more. 

However, the simplicity and generality of APPA/Catalyst seems to come at a  practical and theoretical cost: satisfying existing proximal point accuracy conditions requires solving subproblems to fairly high accuracy. In practice, this means expending computation in subproblem solutions that could otherwise be used for more outer iterations. In theory, APPA/Catalyst complexity bounds feature a logarithmic term that appears unnecessary. For example, in the finite sum problem of minimizing $F(x) = \frac{1}{n}\sum_{i=1}^n f_i(x)$ where each $f_i$ is convex and $L$-smooth, APPA/Catalyst combined with SVRG~\citep{johnson2013accelerating} and $\lambda = L/n$ finds an $\epsilon$-optimal point of $F$ in $O\prn*{ (n + \sqrt{nLR^2/\epsilon})\log \frac {LR^2}{\epsilon}}$ computations of $\grad f_i$, a nearly optimal rate~\citep{woodworth2016tight}. A line of work devoted to designing accelerated method tailored to finite sum problems~\citep{shalev2014accelerated,allen2016katyusha,lan2019unified} attains progressively better practical performance and theoretical guarantees, culminating in an $O\prn*{ n\log\log n + \sqrt{nLR^2/\epsilon}}$ complexity bound~\citep{song2020variance}.

Given the power of APPA/Catalyst, it is natural to ask whether the additional logarithmic complexity term is fundamentally tied to the black-box structure that makes it generally-applicable? Indeed,~\citet{lin2017catalyst} speculate that the logarithmic term ``may be the price to pay for a generic acceleration scheme.'' 

Our work proves otherwise by providing a new Relaxed Error Condition for Accelerated Proximal Point ($\ouralg$) which standard subproblem solvers can satisfy without incurring an extraneous logarithmic complexity term. For finite-sum problems, our approach combined with SVRG recovers the best existing complexity bound of $O\prn*{ n\log\log n + \sqrt{nLR^2/\epsilon}}$.\footnote{Our framework gives accelerated linear convergence for strongly-convex objectives via a standard restarting argument; see~\Cref{prop:catalyst-sc}.} Preliminary experiments on logistic regression problem indicate that our method is competitive with Catalyst-SVRG in practice.\footnote{Code available at: \href{https://github.com/yaircarmon/recapp}{\texttt{github.com/yaircarmon/recapp}}.}

As an additional application of our framework, we consider the problem of minimizing $F(x)=\max_{y\in\yset} f(x,y)$ for a function $f$ that is $L$-smooth, convex in $x$ and $\mu$-strongly-convex in $y$. The best existing complexity bound for this problem is $O{\prn*{\frac{L R}{\sqrt{\mu \epsilon}} \log \frac{L R}{\sqrt{\mu \epsilon}}}}$ evaluations of $\grad f$, using an extension of APPA/Catalyst to min-max problems~\citep{yang2020catalyst}, and  mirror-prox~\citep{korpelevich1976extragradient,nemirovski2004prox,azizian2020tight}
as the subproblem solver.\footnote{\citet{yang2020catalyst} also establish the same rate for the dual problem of maximizing $\Psi(y) = \min_{x\in\xset} f(x,y)$ over $y$. Since our acceleration framework is primal-only, we are currently unable to remove the logarithmic factor from that rate.} Our framework (also combined with mirror-prox) removes this logarithmic factor, finding a point with expected suboptimality $\epsilon$ in $O\prn*{\frac{LR}{\sqrt{\mu \epsilon}}}$ gradient queries (up to lower order terms), which is asymptotically optimal~\citep{ouyang2021lower}. 
We summarize our complexity bounds in \Cref{table:summary}. 

\newcolumntype{x}[1]{>{\centering\arraybackslash\hspace{0pt}}p{#1}}
\notarxiv{
\newcommand{\mrow}[1]{\multirow{2}{*}[-8pt]{#1}}
}
\arxiv{
\newcommand{\mrow}[1]{\multirow{2}{*}[-8pt]{#1}}
}
\begin{table*}[t]
	\centering
	\renewcommand{\arraystretch}{1}
	\bgroup
	\everymath{\displaystyle}
		\begin{tabular}{ccx{3.0cm}x{2.5cm}x{5cm}c}
			\toprule
		Objective  $F$  &Reg. & $\ApproxProx$  complexity & $\WarmStart$  complexity & {Overall \linebreak complexity} & Ref. \\  \midrule
		 \mrow{general} & \mrow{$\lambda$} & \mrow{$\mathcal{T}_{\textsc{A}}$} & \mrow{$\mathcal{T}_{\textsc{W}}$} & $O\prn*{\mathcal{T}_{\textsc{A}}\sqrt{\frac{\lambda R^2}{\epsilon}}+ \mathcal{T}_{\textsc{W}}}$ & Thm.~\ref{thm:improved-catalyst}\\ 
		 & & & & $O\prn*{\mathcal{T}_{\textsc{A}}\sqrt{\frac{\lambda }{\gamma}}\log \frac{LR^2}{\epsilon}+ \mathcal{T}_{\textsc{W}}}$ & Prop.~\ref{prop:catalyst-sc}\\ \midrule
		\mrow{$\frac{1}{n}\sum_{i\in[n]}f_i(x)$} &  \mrow{$\frac{L}{n}$} & \mrow{$O(n)$} & \mrow{$O(n\log\log n)$} & $O\prn*{  \sqrt{\frac{nLR^2}{\epsilon}} + n\log\log n }$ & \mrow{Thm.~\ref{thm:OptCatfs}} \\ 
		& & & & $\Otil{ \sqrt{\frac{nL}{\gamma}} } +O\prn*{ n\log\log n }$  & \\ \midrule
	\mrow{ $\max_{y\in\yset}f(x,y)$} & \mrow{$\mu$} & \multirow{2}{*}[8pt]{${O}\Par{\frac{L}{\mu}}+\Otil{\sqrt{\frac{L}{\mu}}}$} & \mrow{$\widetilde{O}\Par{\frac{L}{\mu}}$} & $O\Par{\frac{LR}{\sqrt{\mu\epsilon}}} + \widetilde{O}\prn*{\frac{L}{\mu} + \sqrt{\frac{LR^2}{\epsilon}}}$ & 	\mrow{Thm.~\ref{thm:minimax-delta}}\\
	& & & &  $\widetilde{O}\prn*{\frac{L}{\sqrt{\mu\gamma}} + \frac{L}{\mu}} $ & \\
		\bottomrule
	\end{tabular}
	\caption{\label{table:summary}\textbf{Summary of our results.} Throughout, $\epsilon$ denotes the solution accuracy, and $\gamma$ denotes the strong convexity parameter of $F$. For finite sum problems (the middle row) we assume that each $f_i$ is $L$-smooth, and measure complexity in terms of $\grad f_i$ evaluations. For max-structured problem (the bottom) we assume that $f$ is $L$-smooth and $\mu$-strongly-concave in $y$, and measure complexity in $\grad f$ evaluations. The notation $\Otil{\cdot}$ hides a logarithmic term. See \Cref{sec:framework} for the definition of $\ApproxProx$ and $\WarmStart$. }
\egroup
\end{table*}

\paragraph{Technical overview.} Our development consists of four key parts. First, we define a criterion on the function-value error of the proximal point computation (\Cref{def:approx-prox}) that significantly relaxes the relative error conditions of prior work; see \Cref{sec:framework-comparison} for a detailed comparison. Second, instead of directly bounding the distance error of the approximate proximal points (as most prior works implicitly do), we follow \citet{asi2021stochastic} and require an \emph{unbiased} estimator of the proximal point whose variance is bounded similarly to the function-value error (\Cref{def:unbiased-prox}). We prove that any approximation satisfying these guarantees has the same convergence bounds on its (expected) error as the exact accelerated proximal points method. Third, we use the multilevel Monte Carlo technique~\citep{giles2015multilevel,blanchet2015unbiased,asi2021stochastic} to obtain the required unbiased proximal point estimator using (in expectation) a constant number of queries to any method satisfying the function-value error criterion. Finally, we show how to use SVRG and mirror-prox to efficiently meet our error criterion, allowing us to solve finite-sum and minimax optimization problems without the typical extra logarithmic factors incurred by previous proximal point frameworks.

Even though we maintain the same iteration structure as APPA/Catalyst, our novel error criterion induces two nontrivial modifications to the algorithm. First and foremost, our relaxed error bound depends on the previous approximate proximal point $x_{t-1}$ as well as the current query point $s_{t-1}$ (see \Cref{alg:catalyst}). This dependence strongly suggests that the subproblem solver should depend on $x_{t-1}$ somehow. For finite-sum problems we use $x_{t-1}$ as the reference point for variance reduction, while for max-structured problems we initialize mirror-prox with $x=s_{t-1}$ and (approximately) $y=\argmax_{y\in\yset} f(x_{t-1}, y)$. The second algorithmic 
consequence, which appeared previously in~\citet{asi2021stochastic}, stems from the fact that our function-value error and zero-bias/bounded-variance requirements are leveraged for distinct parts of the algorithm (the prox step and gradient step, respectively). This naturally leads to using distinct approximate prox points for each part: one directly obtained from the subproblem solver and one debiased via MLMC.

\arxiv{%
\section{Additional Related Work}\label{app:related}

Beyond the closely related work already described, our paper touches on several lines of literature. 

\paragraph{Finite-sum problems.}
The ubiquity of finite-sum optimization problems in machine learning has led to a very large body of work on developing efficient algorithms for solving them.  We refer the reader to~\citet{gower2020variance} for a broad survey and  focus on \emph{accelerated} finite-sum methods, i.e., with a leading order complexity term scaling as $\sqrt{n/\epsilon}$ (or as $\sqrt{n\kappa}$ for strongly-convex problems with condition number $\kappa$). 
Accelerated Proximal Stochastic Dual Coordinate Ascent~\citep{shalev2014accelerated} gave the first such accelerated rate for an important subclass of finite-sum problems. This method was subsequently interpreted as a special case of APPA/Catalyst~\cite{lin2015universal,frostig2015regularizing}, which  can also accelerate several other finite-sum optimization problems. Since then, research has focused on designing more practical and theoretically efficient accelerated algoirthms by opening the APPA/Catalyst black box. The algorithms Katyusha~\cite{allen2016katyusha}, Varag~\cite{lan2019unified} and VRADA~\citep{song2020variance} offer improved complexity bound at the price of the generality and simplicity of APPA/Catalyst. Our approach matches the best existing guarantee (due to VRADA) without paying this price.

\paragraph{Max-structured problems.}

Objectives of the form $F(x) = \max_{y\in\yset} f(x,y)$ are very common in machine learning and beyond. Such objectives arise from constraints (via Lagrange multipliers)~\citep{bertsekas1999nonlinear}, robustness requirements~\citep{bental2009robust,ganin2016domain,madry2018towards}, and game-theoretic considerations~\citep{morgenstern1953theory,silver2017mastering}. When $f$ is convex in $x$ and concave in $y$, the mirror-prox algorithm minimizes $F$ to accuracy epsilon in $O(LR R'/\epsilon)$ gradient evaluations (with respect to both $x$ and $y$), where $R'$ is the diameter of $\yset$. This rate can be improved when $f$ is $\mu$-strongly-concave in $y$. For the special bilinear case $f(x,y) = \phi(x) + \inner{y}{Ax} - \psi(y)$, where $\psi$ a ``simple'' $\mu$-strongly-convex function, an improved complexity bound of $O(LR/\sqrt{\mu \epsilon})$ has long been known~\citep{nesterov2005smooth}.

More recent work studies the case of general convex-strongly-concave $f$. \citet{thekumparampil2019efficient} and \citet{zhao2020primal} establish complexity bounds of $O(\frac{L^{3/2}}{\mu\sqrt{\epsilon}} \log^2 \frac{L^2 R R'}{\mu\epsilon})$, which~\citet{lin2020near} improve to $O(\frac{L}{\sqrt{\mu\epsilon}} \log^3 \frac{L^2 R R'}{\mu\epsilon})$ using an algorithm loosely based on APPA/Catalyst. \citet{yang2020catalyst} present a more direct application of APPA/Catalyst to min-max problems, further improving the complexity to $O(\frac{L}{\sqrt{\mu\epsilon}} \log \frac{L^2 R^2}{\mu\epsilon})$, with logarithmic dependence on $R'$ only in a lower order term. Similarly to standard APPA/Catalyst, the min-max variant requires highly accurate proximal point computation, e.g., to function-value error of $O(\frac{\mu^3 \epsilon^2}{L^4 R^2})$. In contrast, $\ouralg$ requires constant (relative) suboptimality and removes the final logarithmic factor from the leading-order complexity term.  \citet{yang2020catalyst} also provide extensions to finite-sum min-max problems and problems where $f$ is non-convex in $x$, which would likely benefit from out method as well (see \Cref{sec:discussion}).

\emph{Independent work.} In recent independent work, 
 \citet{kovalev2022first}  develop a method that minimizes  $\max_{y}f(x,y)$ assuming $\mu$-strong-concavity in $y$ and $\gamma$-strong-convexity in $x$. They attain an essentially optimal complexity proportional to $\frac{1}{\sqrt{\mu\gamma}}$ times a logarithmic factor depending on problem parameters. Their method is tailored to saddle point problems, working in  an expanded space by using point-wise conjugate function and applying recent advances in monotone operator theory. We note that RECAPP with restarts attains the same complexity bound (see \Cref{thm:minimax-delta}). However, it is unclear whether the algorithm of \citep{kovalev2022first} can recover the RECAPP's complexity bound in the setting where $f$ is not strongly convex in $x$.

\paragraph{Monteiro-Svaiter-type acceleration.}
Monteiro-Svaiter~\citep{monteiro2013accelerated} propose a variant of the accelerated proximal point method that uses an additional gradient evaluation to facilitate approximate proximal point computation. The Monteiro-Svaiter method and its extensions~\citep{gasnikov19near,bubeck2019complexity,bullins2020highly,carmon2020acceleration,song2021unified,kovalev2022first,carmon2022optimal} also allow for the regularization parameter $\lambda$ to be determined  dynamically by the procedure approximating the proximal point. \citet{ivanova2021adaptive} leverage this technique to develop a variant of Catalyst that offers improved adaptivity and, in certain cases, improved complexity. We provide additional comparison between the approximation condition of~\cite{monteiro2013accelerated,ivanova2021adaptive} and RECAPP in \Cref{sec:framework-comparison}.

\paragraph{Multilevel Monte Carlo (MLMC).}
MLMC is a method for debiasing function estimators by randomizing over the level of accuracy~\citep{giles2015multilevel}.
While originally conceived for PDEs and system simulation, a particular variant of MLMC due to~\citet{blanchet2015unbiased} has found recent applications in the theory of stochastic optimization~\citep{levy2020large,hu2021bias}.
Our method directly builds on the recent proposal of~\citet{asi2021stochastic} to use MLMC in order to obtain unbiased estimates of proximal points (or, equivalently, the Moreau envelope gradient). 
\citet{asi2021stochastic} apply this estimator to de-bias proximal points estimated via SGD and improve several structured acceleration schemes. 
In contrast, we apply MLMC on linearly convergent algorithms, allowing us to configure it much more aggressively and avoid the extraneous logarithmic factors that appeared in the rates of \citet{asi2021stochastic}.
}

\paragraph{Paper organization.} After providing some notation and preliminaries in \Cref{sec:prelims}, we present our improved inexact accelerated proximal point framework in \Cref{sec:framework}. We then instantiate our framework: in \Cref{sec:fs} we consider finite-sum problems and SVRG (providing preliminary empirical results in \Cref{ssec:experiments}) and in \Cref{sec:minimax} we consider min-max problems and mirror-prox. \arxiv{We conclude in \Cref{sec:discussion} by discussing limitations and possible extensions.}\notarxiv{We provide additional discussion of related work, including recent independent work by~\citet{kovalev2022first}, in~\Cref{app:related}. The rest of the appendix is composed of the proofs for each corresponding section, followed by \Cref{sec:discussion} which provides a discussion of limitations and possible extensions of this work.}

\section{Preliminaries}\label{sec:prelims}

\paragraph{General notation.} Throughout, $\xset$ and $\yset$ refer to closed, convex sets, with diameters denoted by $R$ and $R'$ respectively (when needed).
We use $F$ to denote a convex function defined on $\xset$. For any parameter  $\lambda>0$ and point $s \in \xset$, we let 
\begin{equation}\label{def:prox-function}
\Flam_{s}(x) \defeq F(x)+\frac{\lambda}{2}\norm{x-s}^2
\end{equation}
denote the proximal regularization of $F$ around $x$, and let $\prox(s) = \argmin_{x\in\xset} F_s^\lambda(x)$.

\paragraph{Distances and norms.} We consider Euclidean space throughout the paper and use $\norm{\cdot}$ to denote standard Euclidean norm. We denote a \emph{projection} of $x\in\R^d$ onto a closed subspace $\xset\subseteq\R^d$ by $\proj_{\xset}(x)= \arg\min_{x'\in\xset}\norm{x-x'}$. For a convex function $F : \xset \rightarrow \R$, we denote the \emph{Bregman divergence} induced by $F$ as
\arxiv{
\begin{equation*}
	V^F_{x}(x') \defeq F(x')-F(x)-\inprod{\nabla F(x)}{x'-x},
\end{equation*}
}
\notarxiv{
$V^F_{x}(x') \defeq F(x')-F(x)-\inprod{\nabla F(x)}{x'-x},$
}
for every $x,x'\in\xset$. We denote the Euclidean Bregman divergence by
\arxiv{
\begin{equation*}
	\Veu_{x}(x') \defeq V_x^{\frac{1}{2}\norm{\cdot}^2}(x') = \frac{1}{2}\norm{x'-x}^2.
\end{equation*}
}
\notarxiv{
$\Veu_{x}(x') \defeq V_x^{\frac{1}{2}\norm{\cdot}^2}(x') = \frac{1}{2}\norm{x'-x}^2.$
}

\paragraph{Smoothness, convexity and concavity.} Given a differentiable, convex function $F: \xset \rightarrow \R$, we say $F$ is \emph{$L$-smooth} if its gradient $\nabla F:\xset\rightarrow \xset^*$ is $L$-Lipschitz. We say $F$ is \emph{$\mu$-strongly-convex} if for all $x, x'\in \xset$,
\arxiv{\[
F(x')\ge F(x)+\langle\nabla F(x),x'-x\rangle+\frac{\mu}{2}\norm{x'-x}^2.
\]}
\notarxiv{
$F(x')\ge F(x)+\langle\nabla F(x),x'-x\rangle+\frac{\mu}{2}\norm{x'-x}^2$.
}
A function $\Psi$ is $\mu$-strongly concave if $-\Psi$ is $\mu$-strongly convex.
For $f(x,y)$ that is convex in $x$ and concave in $y$, the point $(\xopt,\yopt)$ is a saddle-point if $\max_{y\in\yset}f(\xopt,y)\le f(\xopt,\yopt)\le \min_{x\in\xset}f(x,\yopt)$ for all $x,y\in\xset\times\yset$.
\section{Framework}\label{sec:framework}

In this section, we present our Relaxed Error Criterion Accelerated Proximal Point ($\ouralg$) framework. We start by defining our central algorithms and relaxed error criteria (\Cref{sec:framework-defs}). Next, we state our main complexity bounds (\Cref{sec:framework-complexity}) and sketch its proof. Then, we illustrate our new relaxed error criterion by comparing it to the error requirements of prior work (\Cref{sec:framework-comparison}). Finally, as an illustrative warm-up, we show our framework easily recovers the complexity bound of Nesterov's classical accelerated gradient descent (AGD) method~\citep{nesterov1983method}.

\subsection{Methods and Key Definitions}\label{sec:framework-defs}

\arxiv{
\begin{figure}[t]
	\begin{minipage}[t]{0.47\linewidth}%
		\centering
\begin{algorithm2e}[H]
	\caption{$\ouralg$}
	\label{alg:catalyst}
	\DontPrintSemicolon
	{\bfseries Parameters:} $\lambda>0$, step budget $T$ \;
	\label{line:catalyst:warm_start_start}Initialize $\alpha_0\gets 1$ and  $x_0=v_0\leftarrow \WarmStart_{F,\lambda}(R^2)$\;
	\Comment*{To satisfy $\E F(x_0)-F(\xopt)\le \lambda R^2$}
	\For{$t=0$ {\bfseries{\textup{to}}} $T-1$ \label{line:catalyst:for_start}}{
		\label{line:catalyst:convex_start}Set $\alpha_{t+1}\in[0,1]$ to satisfy $\frac{1}{\alpha_{t+1}^2}-\frac{1}{\alpha_{t+1}} = \frac{1}{\alpha_t^2}$ \;
		\label{line:catalyst:convex_end}$s_t\gets \Par{1-\alpha_{t+1}}x_t + \alpha_{t+1}v_t  $ \;
		\label{line:catalyst:approx}$x_{t+1}\gets \ApproxProx_{F,\lambda}(s_t; s_t, x_t)$ \;
		\label{line:catalyst:unbias_start}$\tx_{t+1}\gets \UnbiasedProx_{F,\lambda}(s_t; x_t)$ \; %
		\label{line:catalyst:unbias_end}$v_{t+1}\gets \proj_{\xset}\prn[\big]{v_t-\frac{1}{\alpha_{t+1}}\Par{s_t-\tx_{t+1}}}$ %
	}\label{line:catalyst:for_end}
	\codeReturn $x_T$
\end{algorithm2e}
\end{minipage}\hfill
	\begin{minipage}[t]{0.52\linewidth}%
		\centering
\begin{algorithm2e}[H]
	\caption{$\UnbiasedProx$ via MLMC}
	\label{alg:unbiased-min-MLMC}
	\DontPrintSemicolon
	\codeInput $\ApproxProx$,  points $s,\xprev\in\xset$  \;
	\codeParameter Geometric distribution parameter $p\in[0,1)$ and integer offset $j_0 \geq 0$\;
	\codeOutput Unbiased estimator of $x^\star = \prox(s)$ \;
	$x^{(0)}\gets \ApproxProx_{F,\lambda}(s; s, \xprev)$ \;
	Sample $J_+ \sim \geoRV\left(1 - p\right)\in \{0, 1, 2,\ldots\}$\;
	$J \gets j_0 + J_+$\;
	\For{$j= 0$ {\bfseries{\textup{to}}} $J -1 $}{
		$x^{(j + 1)}\gets\ApproxProx_{F,\lambda}(s; x^{{(j)}}, x^{{(j)}})$\;
	}
 	$p_J \gets \P[\geoRV\left(1 - p\right) = J_+] = (1 - p) \cdot p^{J_+}$\;
	\codeReturn $x^{(j_0)} + p_J^{-1} (x^{(J)}-x^{(\max\{J-1,j_0\})})$ \;
\end{algorithm2e}
	\end{minipage}
\end{figure}
}

\Cref{alg:catalyst} describes our core accelerated proximal method. 
The algorithm follows the standard template of the (inexact) accelerated proximal point method, except that unlike most such methods (but similar to the methods of~\citet{asi2021stochastic}), our algorithm relies on two distinct approximations of $\prox(s_t)$ with different relaxed error criterion. We now define each approximation in turn.

Our first relaxed error criterion constrains the function value of the approximate proximal point and constitutes our key contribution.

\notarxiv{
\begin{algorithm2e}[t]
	\caption{$\ouralg$}
	\label{alg:catalyst}
	\DontPrintSemicolon
	{\bfseries Parameters:} $\lambda>0$, step budget $T$ \;
	\label{line:catalyst:warm_start_start}Initialize $\alpha_0\gets 1$ and  $x_0=v_0\leftarrow \WarmStart_{F,\lambda}(R^2)$\;
	\Comment*{To satisfy $\E F(x_0)-F(\xopt)\le \lambda R^2$}
	\For{$t=0$ {\bfseries{\textup{to}}} $T-1$ \label{line:catalyst:for_start}}{
		\label{line:catalyst:convex_start}Set $\alpha_{t+1}\in[0,1]$ to satisfy $\frac{1}{\alpha_{t+1}^2}-\frac{1}{\alpha_{t+1}} = \frac{1}{\alpha_t^2}$ \;
		\label{line:catalyst:convex_end}$s_t\gets \Par{1-\alpha_{t+1}}x_t + \alpha_{t+1}v_t  $ \;
		\label{line:catalyst:approx}$x_{t+1}\gets \ApproxProx_{F,\lambda}(s_t; s_t, x_t)$ \;
		\label{line:catalyst:unbias_start}$\tx_{t+1}\gets \UnbiasedProx_{F,\lambda}(s_t; x_t)$ \; %
		\label{line:catalyst:unbias_end}$v_{t+1}\gets \proj_{\xset}\prn[\big]{v_t-\frac{1}{\alpha_{t+1}}\Par{s_t-\tx_{t+1}}}$ %
	}\label{line:catalyst:for_end}
	\codeReturn $x_T$
\end{algorithm2e}
}

\begin{definition}[$\ApproxProx$]\label{def:approx-prox}
Given convex function $F:\xset\to \R$, parameter $\lambda>0$, and points $s, \xinit, \xprev\in\xset$, the point $x=\ApproxProx_{F,\lambda}(s;\xinit,\xprev)$ is an approximate minimizer of $F^\lambda_s(x) \defeq F(x) + \lambda \Veu_{s}(x)$ such that for $x^\star \defeq  \prox(s)= \arg\min_{x\in\xset} F^\lambda_s(x)$, 
\begin{gather}
\E \Flam_s(x) - \Flam_s(x^\star)\le \frac{\lambda\Veu_{x^\star}\Par{\xinit}+V^F_{x^\star}(\xprev)}{8}\,.\label{approx-prox-cond}
\end{gather}
\end{definition}

Beyond the prox-center $s$, our robust error criterion depends on two additional points: $\xinit$ (which in \Cref{alg:catalyst} is also set the prox-center $s_t$) and $\xprev$ (which in \Cref{alg:catalyst} is set to the previous iterate $x_t$). The criterion requires the suboptimality of the approximate solution to be bounded by weighted combination of two distances: the Euclidean distance between the true proximal point $\xopt$ and $\xinit$, and the Bregman divergence (induced by $F$) between $\xopt$ and $\xprev$. In \Cref{sec:framework-comparison} we provide a detailed comparison between our criterion and prior work, but note already that---unlike APPA/Catalyst---the relative error we require in~\eqref{approx-prox-cond} is \emph{constant}, i.e., independent of the desired accuracy or number of iterations. This constant level of error is key to enabling our improved complexity bounds.

Our second relaxed error criterion constrains the bias and variance of the approximate proximal point.

\begin{definition}[$\UnbiasedProx$]\label{def:unbiased-prox}
Given convex function $F:\xset\to \R$, parameter $\lambda>0$, and points $s$ and $\xprev\in\xset$, the point $x=\UnbiasedProx_{F,\lambda}(s;\xprev)$ is an approximate minimizer of $F^\lambda_s(x) = F(x) + \lambda \Veu_{s}(x)$ such that $\E~x = x^\star = \prox(s)= \arg\min_{x\in\xset} F^\lambda_s(x)$, and 
\begin{equation}
	\E\norm{x-\xopt}^2\le \frac{\lambda\Veu_{x^\star}\Par{s}+V^F_{x^\star}(\xprev)}{4\lambda}.
	\label{unbiased-prox-cond}
\end{equation}
\end{definition}

Note that the any $x=\ApproxProx_{F,\lambda}(s;s,\xprev)$ satisfies the distance bound~\eqref{unbiased-prox-cond} (due to $\lambda$-strong-convexity of $F_s^\lambda$), but the zero-bias criterion $\E x = \xopt$ is not guaranteed. Nevertheless, an MLMC technique (\Cref{alg:unbiased-min-MLMC}) can extract an $\UnbiasedProx$ from any $\ApproxProx$. \Cref{alg:unbiased-min-MLMC} repeatedly calls $\ApproxProx$ a geometrically-distributed number of times $J$ (every time with $\xinit$ and $\xprev$ equal to the last output), and outputs a point whose expectation equals to an infinite numbers of iterations of $\ApproxProx$, i.e., the exact $\prox(s)$. Moreover, we show that the linear convergence of the procedure implies that the variance of the result remains appropriately bounded (see \Cref{lem:MLMC} below). 
\Cref{alg:unbiased-min-MLMC} is a variation of an estimator by \citet{blanchet2015unbiased} that was previously used in a context similar to ours \citep{asi2021stochastic}. However, prior estimators typically have complexity exponential in $J$, whereas ours are linear in $J$. 

Finally, we define a warm start procedure required by our method.

\begin{definition}[$\WarmStart$]\label{def:warmstart}
	Given convex function $F:\xset\to \R$, parameter $\lambda>0$ and diameter bound $R$, $ x_0=\WarmStart_{F,\lambda} (R^2)$ is a procedure that outputs $x_0 \in \xset$ such that $\E F(x_0)-\min_{x' \in \xset}F(x')\le \lambda R^2$.
\end{definition}

Note that the exact proximal mapping $x=\prox(s)$ satisfies all the requirements above; replacing $\ApproxProx_{F,\lambda}$, $\UnbiasedProx_{F,\lambda}$, and $\WarmStart_{F,\lambda}$ with $\prox$ recovers the exact accelerated proximal method.

\notarxiv{
	\begin{algorithm2e}[t]
		\caption{$\UnbiasedProx$ via MLMC}
		\label{alg:unbiased-min-MLMC}
		\DontPrintSemicolon
		\codeInput $\ApproxProx$,  points $s,\xprev\in\xset$  \;
		\codeParameter Geometric distribution parameter $p\in[0,1)$ and integer offset $j_0 \geq 0$\;
		\codeOutput Unbiased estimator of $x^\star = \prox(s)$ \;
		$x^{(0)}\gets \ApproxProx_{F,\lambda}(s; s, \xprev)$ \;
		Sample $J_+ \sim \geoRV\left(1 - p\right)\in \{0, 1, 2,\ldots\}$\;
		$J \gets j_0 + J_+$\;
		\For{$j= 0$ {\bfseries{\textup{to}}} $J -1 $}{
			$x^{(j + 1)}\gets\ApproxProx_{F,\lambda}(s; x^{{(j)}}, x^{{(j)}})$\;
		}
		$p_J \gets \P[\geoRV\left(1 - p\right) = J_+] = (1 - p) \cdot p^{J_+}$\;
		\codeReturn $x^{(j_0)} + p_J^{-1} (x^{(J)}-x^{(\max\{J-1,j_0\})})$ \;
	\end{algorithm2e}
}

\subsection{Complexity Bounds}\label{sec:framework-complexity}

We begin with a complexity bound for implementing $\UnbiasedProx$ via \Cref{alg:unbiased-min-MLMC} (proved in \Cref{apdx:framework}).

\begin{restatable}[MLMC turns $\ApproxProx$ into $\UnbiasedProx$]{proposition}{lemMLMC}\label{lem:MLMC}
	For any convex $F$ and parameter $\lambda>0$,~\Cref{alg:unbiased-min-MLMC} with $p=1/2$ and $j_0 \geq 2$ implements $\UnbiasedProx$ and makes $2 + j_0$ calls to $\ApproxProx$ in expectation. 
\end{restatable}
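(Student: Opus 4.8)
The plan is to first establish a geometric contraction for the iterates $x^{(0)}, x^{(1)}, \dots$ produced by the repeated $\ApproxProx$ calls, and then use it to verify unbiasedness, the variance bound, and the expected cost in turn. Write $\Psi \defeq \lambda\Veu_{\xopt}(s) + V^F_{\xopt}(\xprev)$ for the quantity controlling the target variance, and define the potential $\Phi_j \defeq \lambda\Veu_{\xopt}(x^{(j)}) + V^F_{\xopt}(x^{(j)})$, which equals the Bregman divergence $V^{\Flam_s}_{\xopt}(x^{(j)})$ of the regularized objective by additivity of Bregman divergences. Since $\xopt$ minimizes the convex program $\min_{x\in\xset}\Flam_s(x)$, first-order optimality gives $\Flam_s(x)-\Flam_s(\xopt)\ge V^{\Flam_s}_{\xopt}(x)$ for all $x\in\xset$; applying this at $x=x^{(j+1)}$ together with the $\ApproxProx$ guarantee \eqref{approx-prox-cond} for the call $x^{(j+1)}=\ApproxProx_{F,\lambda}(s;x^{(j)},x^{(j)})$ yields $\E[\Phi_{j+1}\mid x^{(j)}]\le \Phi_j/8$. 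The base case $x^{(0)}=\ApproxProx_{F,\lambda}(s;s,\xprev)$ gives $\E\Phi_0\le \Psi/8$, so $\E\Phi_j\le 8^{-(j+1)}\Psi$ and, by $\lambda$-strong convexity of $\Flam_s$, $\sigma_j^2\defeq\E\norm{x^{(j)}-\xopt}^2\le \tfrac{2}{\lambda}\E\Phi_j\le \tfrac{\Psi}{4\lambda}8^{-j}$.

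For the remaining arguments I would couple the algorithm with an \emph{infinite} sequence $\{x^{(j)}\}_{j\ge0}$ generated by the same recursion with fresh independent randomness at each step, so that $J_+\sim\geoRV(1-p)$ is independent of the entire sequence; the output is a measurable function of the sequence and of $J_+$, and with $p=1/2$ we have $\P[J_+=m]=2^{-(m+1)}$. Writing $D_k\defeq x^{(k)}-x^{(k-1)}$, inspecting $x^{(J)}-x^{(\max\{J-1,j_0\})}$ shows the returned estimator is $Z=x^{(j_0)}$ when $J_+=0$ and $Z=x^{(j_0)}+p_J^{-1}D_{j_0+J_+}$ when $J_+\ge1$, where $p_J=(1-p)p^{J_+}=2^{-(J_++1)}$. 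The $\sigma_j$ bound gives $\sum_{m\ge1}\E\norm{D_{j_0+m}}\le \sum_{m\ge1}(\sigma_{j_0+m}+\sigma_{j_0+m-1})<\infty$, which guarantees the termwise computations below are absolutely convergent and that $\sum_{m\ge1}D_{j_0+m}$ telescopes almost surely to $\xopt-x^{(j_0)}$ (its $L^2$-limit is $\xopt$ by the $\sigma_j$ bound).

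For unbiasedness, conditioning on the sequence and averaging over $J_+$ cancels the inverse-probability weights: $\E[Z\mid\{x^{(j)}\}]=x^{(j_0)}+\sum_{m\ge1}D_{j_0+m}=\lim_{M\to\infty}x^{(j_0+M)}=\xopt$ a.s., so $\E Z=\xopt$. For the variance, the same conditioning gives
\begin{equation*}
\E\brk*{\norm{Z-\xopt}^2\mid\{x^{(j)}\}}=\norm{x^{(j_0)}-\xopt}^2+2\inner{x^{(j_0)}-\xopt}{\textstyle\sum_{m\ge1}D_{j_0+m}}+\sum_{m\ge1}2^{m+1}\norm{D_{j_0+m}}^2,
\end{equation*}
where the cross term equals $-2\norm{x^{(j_0)}-\xopt}^2$ by the telescoping identity. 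Taking expectations, bounding $\E\norm{D_{j_0+m}}^2\le 2(\sigma_{j_0+m}^2+\sigma_{j_0+m-1}^2)\le \tfrac{9\Psi}{2\lambda}8^{-(j_0+m)}$, and summing the geometric series $\sum_{m\ge1}2^{m+1}8^{-m}=2/3$ gives $\E\norm{Z-\xopt}^2\le -\sigma_{j_0}^2+\tfrac{3\Psi}{\lambda}8^{-j_0}\le \tfrac{\Psi}{4\lambda}$ whenever $j_0\ge2$, matching \eqref{unbiased-prox-cond}. Finally, the algorithm makes $1+J=1+j_0+J_+$ calls to $\ApproxProx$, and $\E J_+=p/(1-p)=1$, so the expected number of calls is $j_0+2$.

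The main obstacle I anticipate is the rigorous justification of the unbiasedness interchange: the estimator carries inverse-probability weights $2^{m+1}$ that grow geometrically, so one must verify absolute ($L^1$) summability before swapping expectation with the infinite sum and before identifying the telescoping limit with $\xopt$ almost surely. This is exactly where the geometric $L^2$ contraction $\sigma_j^2\le \tfrac{\Psi}{4\lambda}8^{-j}$ is essential — its rate $1/8$ dominates the weight growth, so that both $\sum_{m\ge1}2^{m+1}\E\norm{D_{j_0+m}}^2$ and $\sum_{m\ge1}\E\norm{D_{j_0+m}}$ converge — and the requirement $j_0\ge2$ emerges precisely from forcing the resulting constant $3\cdot 8^{-j_0}$ below $1/4$.
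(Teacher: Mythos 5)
Your proof is correct and follows the same skeleton as the paper's: a geometric one-eighth contraction $\E\Phi_{j+1}\le\Phi_j/8$ obtained from the $\ApproxProx$ guarantee combined with first-order optimality of $\xopt$ (the paper's chain $(i)$--$(iii)$ in its induction is exactly your potential argument), then MLMC telescoping for unbiasedness, a weighted second-moment sum for the variance, and the same call count. Within that skeleton you diverge in three ways worth noting. First, for the variance the paper splits $\norm{x^{(j_0)}+p_J^{-1}(x^{(J)}-x^{(\max\{J-1,j_0\})})-\xopt}^2$ via $\norm{a+b}^2\le 2\norm{a}^2+2\norm{b}^2$ and lands on the constant $6.5\cdot 8^{-j_0}$, whereas you compute the conditional expectation exactly and exploit the cross-term cancellation $2\inner{x^{(j_0)}-\xopt}{\sum_m D_{j_0+m}}=-2\norm{x^{(j_0)}-\xopt}^2$, giving the sharper bound $3\cdot 8^{-j_0}\Psi/\lambda-\sigma_{j_0}^2$; both comfortably satisfy \eqref{unbiased-prox-cond} for $j_0\ge 2$, but your version would permit $j_0\ge 2$ with more slack (and the paper's crude split is more robust, needing no a.s.\ identification of the telescoping limit). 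Second, you explicitly justify the interchange of expectation with the infinite weighted sum via coupling with an infinite iterate sequence and $L^1$-summability of $\sum_m 2^{m+1}\E\norm{D_{j_0+m}}^2$ dominated by the $8^{-j}$ contraction; the paper asserts $\E x=\lim_j x^{(j)}=\xopt$ with this bookkeeping left implicit (and with a typo, writing $x^{(j_0-1)}$ for $x^{(j_0)}$). Third, your call count $1+J=1+j_0+J_+$ with $\E J_+=p/(1-p)=1$ is the correct accounting: the paper's text contains two compensating slips (it omits the initial call for $x^{(0)}$ and uses $\E J_+=\tfrac{1}{1-p}$, the mean of a geometric supported on $\{1,2,\dots\}$, for a variable supported on $\{0,1,\dots\}$) yet reaches the same answer $j_0+2$.
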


We now give our complexity bound for $\ouralg$ and sketch its proof, deferring the full proof to~\Cref{apdx:framework}.

\begin{restatable}[$\ouralg$ complexity bound]{theorem}{catalyst}\label{thm:improved-catalyst} Given any convex function $F:\xset \to \R$ and parameters $\lambda,R>0$, $\ouralg$ (\Cref{alg:catalyst}) finds 
	 $x \in \xset$ with $\E F(x) - \min_{x'\in\xset}F(x')\le \eps$, within $O\prn[\big]{\sqrt{\lambda R^2/\epsilon}}$ iterations using one call to $\WarmStart$, and $O\prn[\big]{\sqrt{\lambda R^2/\epsilon}}$ calls to $\ApproxProx$ and $\UnbiasedProx$. If we implement $\UnbiasedProx$ using \Cref{alg:unbiased-min-MLMC} with $p=1/2$ and $j_0 = 2$, the total number of calls to $\ApproxProx$ is $O\prn[\big]{\sqrt{\lambda R^2/\epsilon}}$ in expectation. 
\end{restatable}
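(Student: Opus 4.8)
The plan is to analyze \Cref{alg:catalyst} as a noisy version of the exact accelerated proximal point method and show that the two relaxed error criteria (\Cref{def:approx-prox} and \Cref{def:unbiased-prox}) are exactly strong enough to preserve the $O(1/T^2)$ convergence rate in expectation. I would set up a potential function of the standard accelerated-proximal form, something like
\begin{equation*}
\Phi_t \defeq \frac{1}{\alpha_t^2}\Par{F(x_t) - F(\xopt)} + \frac{\lambda}{2}\norm{v_t - \xopt}^2,
\end{equation*}
and try to show $\E \Phi_{t+1} \le \E \Phi_t$ (or that it decreases up to controllable error terms), exploiting the recursion $\frac{1}{\alpha_{t+1}^2} - \frac{1}{\alpha_{t+1}} = \frac{1}{\alpha_t^2}$ on \cref{line:catalyst:convex_start}. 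Since $1/\alpha_t^2 = \Omega(t^2)$ for this sequence, telescoping and bounding $\Phi_0$ via the $\WarmStart$ guarantee (which gives $\E F(x_0) - F(\xopt) \le \lambda R^2$ and $\norm{v_0 - \xopt}^2 \le R^2$) would then yield $\E F(x_T) - F(\xopt) = O(\lambda R^2 / T^2)$, so that $T = O(\sqrt{\lambda R^2/\epsilon})$ iterations suffice; this directly gives the iteration count and the number of calls to $\ApproxProx$ and $\UnbiasedProx$.

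The first key step is the descent of the function-value (prox) term. Writing $x_{t+1} = \ApproxProx_{F,\lambda}(s_t; s_t, x_t)$, I would use $\lambda$-strong convexity of $\Flam_{s_t}$ around its minimizer $\xopt_t \defeq \prox(s_t)$ together with the three-point identity for $\Veu$ to relate $F(x_{t+1})$ to $F(\xopt_t)$, $F(x_t)$, and the regularization terms, and then charge the resulting slack against the right-hand side of \eqref{approx-prox-cond}. The crucial feature is that the error budget in \eqref{approx-prox-cond} is $\frac18\prn{\lambda\Veu_{\xopt_t}(s_t) + V^F_{\xopt_t}(x_t)}$, whose two pieces are precisely the Euclidean and Bregman quantities that appear with positive sign in the clean descent inequality — so the constant $\frac18$ lets the error be absorbed while leaving a net decrease. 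This is the place where the paper's relaxation over APPA/Catalyst pays off: the allowed error is a \emph{constant} fraction of these natural distances rather than an accuracy-dependent quantity.

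The second key step is the gradient (momentum) update $v_{t+1} = \proj_\xset(v_t - \frac{1}{\alpha_{t+1}}(s_t - \tx_{t+1}))$. Here $\lambda(s_t - \xopt_t) = \grad F(\xopt_t)$ is the Moreau-envelope gradient, and $\tx_{t+1} = \UnbiasedProx_{F,\lambda}(s_t; x_t)$ is an \emph{unbiased} estimate of $\xopt_t$, so $\E[\lambda(s_t - \tx_{t+1}) \mid \mathcal F_t] = \grad F(\xopt_t)$. I would expand $\norm{v_{t+1} - \xopt}^2$ using nonexpansiveness of the projection, take conditional expectation, and split into a ``mean'' term (which plays the role of a mirror-descent/gradient step against the true gradient and couples to convexity of $F$ at $\xopt_t$) plus a ``variance'' term $\frac{1}{\alpha_{t+1}^2}\Var(\lambda \tx_{t+1})$. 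The variance bound \eqref{unbiased-prox-cond} is engineered so that, after multiplying by $\lambda$ and $1/\alpha_{t+1}^2$, this term is again a controlled multiple of $\lambda\Veu_{\xopt_t}(s_t) + V^F_{\xopt_t}(x_t)$, matching what the prox step produced.

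I expect the main obstacle to be the bookkeeping that makes the two error budgets cancel simultaneously against a single pool of $\Veu_{\xopt_t}(s_t)$ and $V^F_{\xopt_t}(x_t)$ terms. Both the $\ApproxProx$ slack and the $\UnbiasedProx$ variance consume fractions of the same two nonnegative quantities, and the clean accelerated analysis also generates these quantities with definite signs from the $\alpha_{t+1}$ recursion and the identity $s_t = (1-\alpha_{t+1})x_t + \alpha_{t+1}v_t$; the delicate part is verifying that the constants $\frac18$ and $\frac{1}{4\lambda}$ leave a nonnegative residual so the potential is genuinely nonincreasing in expectation. Once the per-step inequality $\E[\Phi_{t+1}\mid\mathcal F_t]\le \Phi_t$ is established, telescoping, the $\alpha_t$ growth estimate, and the $\WarmStart$ initialization are routine. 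The final sentence of the theorem — replacing each $\UnbiasedProx$ call by \Cref{alg:unbiased-min-MLMC} — follows immediately by combining the $O(\sqrt{\lambda R^2/\epsilon})$ call count with \Cref{lem:MLMC}, which shows each $\UnbiasedProx$ costs $O(1)$ expected $\ApproxProx$ calls, so the expected totals multiply to $O(\sqrt{\lambda R^2/\epsilon})$.
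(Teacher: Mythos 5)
Your proposal is correct and follows essentially the same route as the paper's proof: the identical potential $\alpha_t^{-2}(F(x_t)-F(\xopt))+\tfrac{\lambda}{2}\norm{v_t-\xopt}^2$, absorbing the $\ApproxProx$ slack and the $\UnbiasedProx$ bias/variance split (via projection nonexpansiveness) against the extra descent terms $\lambda\Veu_{\xopt_{t+1}}(s_t)$ and $V^F_{\xopt_{t+1}}(x_t)$ that the exact accelerated step provides, then telescoping with the $\WarmStart$ bound and invoking \Cref{lem:MLMC} for the expected $\ApproxProx$ count. The only detail you leave implicit that the paper makes explicit is the ratio bound $\alpha_t^2/\alpha_{t+1}^2\le 3$ needed to match the two budgets across consecutive scalings (the reason $\alpha_0=1$ with a warm start is required), but your flagged "delicate bookkeeping" is exactly this step and it goes through as you anticipate.
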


\notarxiv{\textbf{Proof sketch.}}
\arxiv{\begin{psketch}}
 We split the proof into two steps.

\newcommand{\vopt}{v^\star}

\emph{Step 1: Tight idealized potential decrease.} Consider iteration $t$ of the algorithm, and define the potential
\begin{equation*}
	P_t \defeq \E \brk*{ \alpha_t^{-2}\Par{F(x_{t}) - F(x')} +  \lambda\Veu_{v_{t}}\Par{x'}},
\end{equation*}
where $x'$ is a minimizer of $F$ in $\xset$. Let $\xopt_{t+1} \defeq \prox(s_t)$ and $\vopt_{t+1} = v_t - (\alpha_{t+1})^{-1} (s_t - \xopt_{t+1})$ be the ``ideal'' values of $x_{t+1}$ and $v_{t+1}$ obtained via an exact prox-point computation, where for simplicity we ignore the projection onto $\xset$. Using these points we define the idealized potential
\begin{equation*}
	P^\star_{t+1} \defeq \E \brk*{ \alpha_{t+1}^{-2}\Par{F(\xopt_{t+1}) - F(x')} +  \lambda\Veu_{\vopt_{t+1}}\Par{x'}}.
\end{equation*}
Textbook analyses of acceleration schemes show that $P^\star_{t+1} \le P_t$~\cite{nesterov2018lectures,monteiro2013accelerated}. Basic inexact accelerated prox-point analyses proceed by showing that the true potential is not much worse that the idealized potential, i.e., 
$P_{t+1} \le P^\star_{t+1} + \delta_t$, which immediately allows one to conclude that $P_T \le P_0 + \Delta_T$ for $\Delta_T =\sum_{t < T} \delta_t$, and therefore that $\E F(x_T) - F(x') \le \alpha_T^2 (\lambda R^2 + \Delta_T)$, implying the optimal rate of convergence as long $\Delta_T = O(\lambda R^2)$. However, obtaining such a small $\Delta_T$ to be that small naively requires approximating the proximal points to very high accuracy, which is precisely what we attempt to avoid.   

Our first step toward a relaxed error criterion is proving stronger idealized potential decrease. We show that, 
\begin{equation*}
	P^\star_{t+1} \le P_t - \E\brk*{\alpha_{t+1}^{-2} \lambda \Veu_{\xopt_{t+1}}(s_t) +  \alpha_{t}^{-2} V^F_{\xopt_{t+1}}(x_t)}.
\end{equation*}
While the potential decrease term $\alpha_{t+1}^{-2} \lambda \Veu_{\xopt_{t+1}}(s_t)$ is well known and has been thoroughly exploited by prior work~\citep{frostig2015regularizing,lin2017catalyst,monteiro2013accelerated}, making use of the term $ \alpha_{t}^{-2} V^F_{\xopt_{t+1}}(x_t)$ is, to the best of our knowledge, new to this work. 

\emph{Step 2: Matching approximation errors.}
With the improved potential decrease at bound in hand, our strategy is clear: make the approximation error cancel with the potential decrease. That is, we wish to show
\begin{equation*}
	P_{t+1} \le P^\star_{t+1} + \E\brk*{\alpha_{t+1}^{-2} \lambda \Veu_{\xopt_{t+1}}(s_t) + \alpha_{t}^{-2} V^F_{\xopt_{t+1}}(x_t)},
\end{equation*}
so that overall we have $P_{t+1} \le P_{t}$ and consequently 
 $\E F(x_T) - F(x') = O\prn*{\alpha_T^2 ( F(x_0) - F(x') + \lambda R^2 } = O\prn*{\lambda R^2 / T^2}$, with the last bound following form the warm-start condition and $\alpha_t = O(1/t)$. 

It remains to show that the $\ApproxProx$ and $\UnbiasedProx$ criteria provide the needed error bounds. For the function value, \Cref{def:approx-prox} implies
\arxiv{
\begin{flalign*}
	\E F(x_{t+1})  & = \E \brk*{F_{s_t}^\lambda(x_{t+1}) -\lambda\Veu_{x_{t+1}}\Par{s_t}}
	 \\&
	\le \E\brk*{F_{s_t}^\lambda(\xopt_{t+1}) + \tfrac{\lambda\Veu_{\xopt_{t+1}}\Par{s_t}+V^F_{\xopt_{t+1}}(x_t)}{8}-\lambda\Veu_{x_{t+1}}\Par{s_t}}
	\\&
	=
	\E \brk*{F(\xopt_{t+1}) + \tfrac{7}{8}\lambda \Veu_{\xopt_{t+1}}\Par{s_t} + \tfrac{1}{4}\lambda \Veu_{\xopt_{t+1}}\Par{s_t}+\tfrac{1}{8}V^F_{\xopt_{t+1}}(x_t)-\lambda\Veu_{x_{t+1}}\Par{s_t}}\\
	& \le \E \brk*{F(\xopt_{t+1}) + \tfrac{7}{8}\lambda \Veu_{\xopt_{t+1}}\Par{s_t} + \tfrac{1}{8}V^F_{\xopt_{t+1}}(x_t)+\tfrac{1}{2}\lambda \Veu_{\xopt_{t+1}}\Par{x_{t+1}}+\tfrac{1}{2}\lambda \Veu_{x_{t+1}}\Par{s_{t}}-\lambda \Veu_{x_{t+1}}\Par{s_t}}\\
	& \le F\left(\xopt_{t+1}\right)+\tfrac{7}{8}\lambda\Veu_{\xopt_{t+1}}\Par{s_t} + \tfrac{5}{24}V^F_{\xopt_{t+1}} \Par{x_t},
\end{flalign*}
}
\notarxiv{
\begin{flalign*}
	& \E F(x_{t+1}) - F(s_t)   = \E \brk*{F_{s_t}^\lambda(x_{t+1}) -\lambda\Veu_{x_{t+1}}\Par{s_t}}
	 \\& \hspace{2em}
	\le \E\brk*{F_{s_t}^\lambda(\xopt_{t+1}) + \tfrac{\lambda\Veu_{\xopt_{t+1}}\Par{s_t}+V^F_{\xopt_{t+1}}(x_t)}{8}-\lambda\Veu_{x_{t+1}}\Par{s_t}}
	\\
	& \hspace{2em}
	\le F\left(\xopt_{t+1}\right)-F(s_t )+\tfrac{7}{8}\lambda\Veu_{\xopt_{t+1}}\Par{s_t} + \tfrac{5}{24}V^F_{\xopt_{t+1}} \Par{x_t},
\end{flalign*}

}
where for the last inequality we use the property that $\E[\tfrac{1}{2}\lambda \Veu_{\xopt_{t+1}}\Par{x_{t+1}}]\le \E[\tfrac{1}{6}\lambda \Veu_{x_{t+1}}\Par{s_{t}}]+\tfrac{1}{12}V^F_{\xopt_{t+1}}\Par{x_t}$ due to the strong convexity of $F^\lambda$ and the approximate optimality of $x_{t+1}$ guaranteed by $\ApproxProx$ (see~\eqref{eq:catalyst-helper} and the derivations before it in Appendix).

For $\Veu_{\vopt_{t+1}}\Par{x'}$, \Cref{def:unbiased-prox} implies $\vopt_{t+1} = \E v_{t+1}$ and
\begin{flalign*}
	&\E\Veu_{v_{t+1}}\Par{x'} - \E\Veu_{\vopt_{t+1}}\Par{x'} =   (\alpha_{t+1})^{-2} \tfrac{1}{2} \E \norm{\xopt_{t+1} - \tx_{t+1}}^2
	\notarxiv{\\ & ~~~~~~~~}
	\le (\alpha_{t+1})^{-2} \cdot \tfrac{\lambda\Veu_{\xopt_{t+1}}\Par{s_t}+V^F_{\xopt_{t+1}}(x_t)}{8\lambda}.
\end{flalign*}
Substituting back into the expressions for $P_{t+1}$ and $P^\star_{t+1}$ and using the fact that $\frac{\alpha_{t+1}^2}{\alpha_t^2} \ge \frac{1}{3}$, we obtain the desired bound on $P_{t+1}-P^\star_{t+1}$ and conclude the proof.\footnote{The need to have a lower bound like $\frac{\alpha_{t+1}^2}{\alpha_t^2} \ge \frac{1}{3}$ is the reason $\ouralg$ does not take $\alpha_0 = \infty$ and requires a warm-start.}

\arxiv{\end{psketch}}

\notarxiv{\textbf{Complexity bound for strongly-convex functions.}}
\arxiv{\paragraph{Complexity bound for strongly-convex functions.}}
For completeness, we also include a guarantee for minimizing a strongly-convex function $F$ by restarting $\ouralg$. See \Cref{apdx:framework} for pseudocode and proofs.

\begin{restatable}[$\ouralg$ for strongly-convex functions]{proposition}{catalystsc}\label{prop:catalyst-sc}
	For any $\gamma$-strongly-convex function $F:\xset \to \R$, and parameters $\lambda\ge \gamma$, $R>0$, restarted $\ouralg$ (\Cref{alg:catalyst-sc}) finds $x$ such that $\E F(x) - \min_{x'\in\xset}F(x')\le \eps$, using one call to $\WarmStart$, and $O\prn[\big]{\sqrt{\lambda/\gamma}\log\frac{LR^2}{\epsilon}}$ calls to $\ApproxProx$ and $\UnbiasedProx$. If we implement $\UnbiasedProx$ using \Cref{alg:unbiased-min-MLMC} with $p=1/2$ and $j_0 = 2$, the number of calls to $\ApproxProx$ is $O\prn[\big]{\sqrt{\lambda/\gamma}\log\frac{LR^2}{\epsilon}}$ in expectation.
\end{restatable}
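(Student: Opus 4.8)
The plan is to prove the proposition by a standard restarting (``doubling'') argument layered on top of \Cref{thm:improved-catalyst}, exploiting that strong convexity converts a bound on function-value suboptimality into a bound on squared distance to the optimum. The key observation is that the potential argument behind \Cref{thm:improved-catalyst} yields a \emph{self-improving} guarantee: for a single run of $\ouralg$ of length $T$ initialized (at an arbitrary, possibly random, point) with $x_0 = v_0 = \xinit$, the analysis gives
\begin{equation*}
\E\brk*{F(x_T) - F(\xopt)} \le \alpha_T^2 \Par{\E\brk*{F(\xinit) - F(\xopt)} + \tfrac{\lambda}{2}\E\norm{\xinit - \xopt}^2},
\end{equation*}
where $\alpha_T = O(1/T)$, so $\alpha_T^2 = O(1/T^2)$. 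This is precisely the bound $P_T \le P_0$ from the proof sketch, written out with $\alpha_0 = 1$ and $v_0 = x_0 = \xinit$ (so $P_0 = \E[F(\xinit)-F(\xopt)] + \tfrac{\lambda}{2}\E\norm{\xinit-\xopt}^2$) together with $P_T \ge \alpha_T^{-2}\E[F(x_T)-F(\xopt)]$. Crucially it holds in \emph{unconditional} expectation and is linear in the initial quantities, which lets it chain across restarts via the tower property.

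First I would invoke $\gamma$-strong-convexity of $F$ to bound $\tfrac{\lambda}{2}\norm{\xinit - \xopt}^2 \le \tfrac{\lambda}{\gamma}\Par{F(\xinit) - F(\xopt)}$ pointwise, and hence in expectation. Substituting into the display and using $\lambda \ge \gamma$ gives
\begin{equation*}
\E\brk*{F(x_T) - F(\xopt)} \le \alpha_T^2\Par{1 + \tfrac{\lambda}{\gamma}}\E\brk*{F(\xinit) - F(\xopt)} \le 2\alpha_T^2 \tfrac{\lambda}{\gamma}\, \E\brk*{F(\xinit) - F(\xopt)}.
\end{equation*}
Choosing the epoch length $T = O(\sqrt{\lambda/\gamma})$ large enough that $2\alpha_T^2 \lambda/\gamma \le \tfrac12$ (possible since $\alpha_T^2 = O(1/T^2)$) makes each epoch contract the expected suboptimality by a factor of $\tfrac12$.

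Next I would set up the restart schedule of \Cref{alg:catalyst-sc}: call $\WarmStart_{F,\lambda}(R^2)$ \emph{once} to obtain $x^{(0)}$ with $\E F(x^{(0)}) - F(\xopt) \le \lambda R^2$ (\Cref{def:warmstart}), then run $K$ epochs of $\ouralg$ of length $T$ each, feeding the output of epoch $k$ as the initial point $\xinit = x_0 = v_0$ of epoch $k+1$ (so no further $\WarmStart$ calls are needed). Writing $\Delta_k \defeq \E F(x^{(k)}) - F(\xopt)$, the per-epoch contraction gives $\Delta_k \le 2^{-k}\Delta_0 \le 2^{-k}\lambda R^2$, so $K = \ceil*{\log_2(\lambda R^2/\epsilon)}$ epochs suffice to reach $\Delta_K \le \epsilon$. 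The total number of $\ApproxProx$/$\UnbiasedProx$ calls is $K\cdot T = O\Par{\sqrt{\lambda/\gamma}\log(\lambda R^2/\epsilon)} = O\Par{\sqrt{\lambda/\gamma}\log(LR^2/\epsilon)}$, where I use $\lambda \le L$ in the relevant regime to rewrite the logarithm; \Cref{lem:MLMC} then converts the $\UnbiasedProx$ calls into the same asymptotic number of $\ApproxProx$ calls in expectation.

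The main obstacle --- and the reason the argument stays clean rather than requiring a high-probability or per-restart warm-start condition --- is the chaining of in-expectation guarantees across randomized restarts. If one tried to condition on the random output $x^{(k)}$ and reapply \Cref{thm:improved-catalyst} directly, the warm-start hypothesis $F(x^{(k)}) - F(\xopt) \le \lambda R^2$ would hold only in expectation, not pointwise, breaking the per-epoch application. I avoid this entirely by using the linear, unconditional self-improving bound above, so the recursion $\Delta_{k+1} \le \tfrac12 \Delta_k$ holds for the unconditional expectations directly. The only remaining care is confirming that the potential inequality $P_{t+1}\le P_t$ from the proof of \Cref{thm:improved-catalyst} is valid for an arbitrary (random) initialization rather than a $\WarmStart$ output; this holds because that step-wise inequality is established through (conditional) expectations and never uses the specific form of $x_0$.
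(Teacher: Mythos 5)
Your proposal is correct and follows essentially the same route as the paper's proof: one call to $\WarmStart$, epochs of length $T = O(\sqrt{\lambda/\gamma})$ whose single-run potential bound (from \Cref{lem:sapm-step-bound} telescoped, valid for arbitrary random initialization $x_0=v_0$) yields a factor-$\tfrac12$ contraction per epoch, chained in unconditional expectation over $K=O(\log(\lambda R^2/\epsilon))$ restarts. The only (cosmetic) difference is bookkeeping: the paper's induction halves the combined quantity $\E[F(x^{(k)})-F(x')+\tfrac{\lambda}{2}\norm{x^{(k)}-x'}^2]$, converting the function gap back into the distance term via $\gamma$-strong convexity \emph{after} each epoch, whereas you track only the function gap $\Delta_k$ and absorb the distance term \emph{before} each epoch via $\tfrac{\lambda}{2}\norm{\xinit-\xopt}^2 \le \tfrac{\lambda}{\gamma}(F(\xinit)-F(\xopt))$ --- the two are equivalent up to constants.
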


\subsection{Comparisons of Error Criteria}\label{sec:framework-comparison}

We now compare $\ApproxProx$ (\Cref{def:approx-prox}) to other proximal-point error criteria from the literature. Throughout, we fix a center-point $s$ and let $\xopt = \prox(s)$.

\notarxiv{\textbf{Comparison with \citet{frostig2015regularizing}.}}
\arxiv{\paragraph{Comparison with \citet{frostig2015regularizing}.}}
 The APPA framework, which focuses on $\gamma$-strongly-convex functions, requires the function-value error bound
\begin{equation*}
	F^\lambda_s\Par{x}-F^\lambda_s\Par{\xopt}\le O\prn*{\prn*{\frac{\gamma}{\lambda}}^{1.5}}\Par{F^\lambda_s\Par{\xinit}-F^\lambda_s\Par{\xopt}}
\end{equation*}
to hold for all $\xinit$. 
To compare this requirement with $\ApproxProx$, note that in the unconstrained setting
\begin{equation*}
\begin{aligned}
	\lambda\Veu_{x^\star}\Par{\xinit}+V^F_{x^\star}(\xinit) & = 
	V^{F_s^\lambda}_{x^\star}(\xinit)\\
	& = F_s^\lambda(\xinit) - F_s^\lambda(\xopt),
	\end{aligned}
\end{equation*}
where the last equality is due to the fact that $\xopt$ minimizes $F_s^\lambda$ and therefore $\inner{\grad F_s^\lambda(\xopt)}{\xopt-\xinit} = 0$. Consequently, the error of $\ApproxProx_{F,\lambda}(s;\xinit,\xinit)$ is 
\begin{equation*}
	F^\lambda_s\Par{x}-F^\lambda_s\Par{\xopt}\le \frac{1}{8}\Par{F^\lambda_s\Par{\xinit}-F^\lambda_s\Par{\xopt}}.
\end{equation*}
Therefore, in the unconstrained setting and the special case of $\xprev=\xinit$ we require a constant factor relative error decrease, while APPA requires decrease by a factor proportional to $(\gamma/\lambda)^{3/2}$, or $(\epsilon /(\lambda R^2))^{3/2}$ with the standard conversion $\gamma = \epsilon / R^2$. Thus, our requirement is significantly more permissive. 

\notarxiv{\textbf{Comparison with \citet{lin2017catalyst}.}}
\arxiv{\paragraph{Comparison with \citet{lin2017catalyst}.}}
 The Catalyst framework offers a number of error criteria. Most closely resembling $\ApproxProx$ is their relative error criterion (C2):
\begin{equation*}
	F^\lambda_s\Par{x}-F^\lambda_s\Par{\xopt}\le \delta_{t} \lambda \Veu_{x}(s),
\end{equation*}
with $\delta_{t} = (t+1)^{-2}$ which is of the order of $\epsilon / (\lambda R^2)$ for most iterations. Setting $\delta_t = 1/10$ in the Catalyst criterion would satisfy $\ApproxProx_{F,\lambda}(s;s,x')$ for any $x'$. Furthermore, $\ApproxProx$ allows for an additional error term proportional to $V^F_{x^\star}(\xinit)$, which does not exist in Catalyst. In our analysis in the next sections this additional term is essential to efficiently satisfy our criterion.

\notarxiv{\textbf{Comparison with the Monteiro-Svaiter (MS) condition.}}
\arxiv{\paragraph{Comparison with the Monteiro-Svaiter (MS) condition.}}
\citet{ivanova2021adaptive} and \citet{monteiro2013accelerated} consider the error criterion
\[\norm{\nabla F^\lambda_s(x)}\le \sigma \lambda \norm{x-s}.\]
This criterion implies the bound   $F^\lambda_s\Par{x}-F^\lambda_s\Par{\xopt}\le\frac{1}{\lambda}\norm{\nabla F^\lambda_s(x)}^2\le  2\sigma^2 \lambda \Veu_{x}(s)$, making it stronger that the Catalyst C2 criterion when $\sigma = \sqrt{\delta_t/2}$. However, \citet{monteiro2013accelerated} show that by updating of $v_t$ using  $\grad F(x_{t+1})$, any \emph{constant} value of $\sigma$ in $[0,1)$ suffices for obtaining rates similar to those of the exact accelerated proximal point method. The $\ApproxProx$ criterion is strictly weaker than the MS criterion with $\sigma=1/5$. 

\citet{ivanova2021adaptive} leverage the MS framework and its improved error tolerance to develop a reduction-based method that, for some problems, is more efficient than Catalyst by a logarithmic factor. However, for the finite sum and max-structured problems we consider in the following sections, it is unclear how to satisfy the MS condition without incurring an extraneous logarithmic complexity term.

\notarxiv{\textbf{Stochastic error criteria.}}
\arxiv{\paragraph{Stochastic error criteria.}}
It is important to note that in contrast to APPA and Catalyst, $\ouralg$ is inherently randomized. The unbiased condition of $\UnbiasedProx$ is critical to our analysis of the update to $v_t$. Although in many cases (such as in finite-sum optimization) efficient proximal point oracles require randomization anyhow, we extend the use of randomness to the acceleration framework's update itself. It is an interesting question to determine if this randomization is necessary and comparable performance to $\ouralg$ can be obtained based solely on deterministic applications of $\ApproxProx$.

\subsection{The AGD Rate as a Special Case}\label{sec:framework-agd}
For a quick demonstration of our framework, we show how to recover the classical $\sqrt{LR^2/\epsilon}$ complexity bound for minimizing an $L$-smooth function $F$ using exact gradient computations. To do so, we set $\lambda = L$ and note that $F^\lambda_s$ is $L$-strongly convex and $2L$-smooth. Therefore, for each $F^\lambda_s$ with $\xopt=\prox(s)$ we can implement $\ApproxProx$ by taking $4$ gradient steps starting from $\xinit$, since these steps produce an $x$ satisfying
\begin{equation*}
	\Veu_{\xopt}\Par{x} \le (1-\tfrac{1}{2})^4 \Veu_{\xopt}\Par{\xinit} =  \frac{1}{16}\Veu_{\xopt}\Par{\xinit}
\end{equation*}
and therefore
\begin{gather*}
	F^\lambda_s\Par{x}-F^\lambda_s\Par{\xopt}\le 2L \Veu_{\xopt}\Par{x}\le \frac{L}{8}\Veu_{\xopt}\Par{\xinit}.
\end{gather*}

Invoking \Cref{thm:improved-catalyst} with $\lambda=L$ shows that $\ouralg$ finds an $\epsilon$-approximate solution with $O(R\sqrt{L/\epsilon})$ gradient queries, recovering the result of~\citet{nesterov1983method}.

\section{Finite-sum Minimization}\label{sec:fs}

\arxiv{
\begin{figure}[t]
	\begin{minipage}[t]{0.465\linewidth}%
		\centering
\begin{algorithm2e}[H]
	\caption{$\SVRGone$}
	\label{alg:SVRG}
	\DontPrintSemicolon
	\codeInput $\Phi = \frac{1}{n}\sum_{i\in[n]}\phi_i$ (with component gradient oracles), center point $\xfull$, initial point $\xinit$, step-size $\eta$, iteration number $T$

	Query gradient $\nabla \fun(\xfull) = \frac{1}{n}\sum_{i\in[n]}\nabla \phi_i( \xfull)$ \;
	$x_0 \gets \xinit$\; \Comment*{KEY: $\xinit$, $\xfull$ may be different}
	\For{$t=0$ {\bfseries{\textup{to}}} $T-1$}{
		Sample $i_t\sim \mathrm{Unif}[n]$ \;
		$g_t \gets \nabla \phi_{i_t}(x_t)-\nabla \phi_{i_t} (\xfull) + \nabla \fun(\xfull)$ \;
		$x_{t+1} \gets \proj_{\xset}\Par{x_t-\eta g_t}$ \;
	}
	\codeReturn $\bx = \frac{1}{T}\sum_{t\in[T]}x_{t}$ 
\end{algorithm2e}
\end{minipage}\hfill
	\begin{minipage}[t]{0.525 \linewidth}%
		\centering
\begin{algorithm2e}[H]
	\caption{$\WarmStart$-$\SVRG$}
	\label{alg:SVRG-warmstart}
	\DontPrintSemicolon
	\codeInput $F = \frac{1}{n}\sum_{i\in[n]}f_i$, smoothness $L$, point $\xinit$\;
	\codeParameter Iteration number $T$\;
	$x^{(0)} \gets \xinit$ \; 
	\For{$k=0$ \codeStyle{to} $K-1$}{
		$\eta_{k+1}\gets \prn[\big]{8Ln^{2^{-k-1}}}^{-1}$ \;
		$x^{(k+1)}\gets \SVRGone\Par{ F, x^{(k)}, x^{(k)}, \eta_{k+1}, T}$\;
	}
	\codeReturn $x^{(K)}$
\end{algorithm2e}
	\end{minipage}
\end{figure}
}

In this section, we consider the following problem of finite-sum minimization:
\begin{gather}
	\minimize_{x\in\xset} F(x)\defeq\frac{1}{n}\sum_{i\in[n]} f_i(x),\label{def:problem-fs}%
\end{gather}
where each $f_i$ is $L$-smooth and convex.

We solve the problem by combining $\ouralg$ with a single epoch of SVRG~\citep{johnson2013accelerating}, shown in \Cref{alg:SVRG}. Our single point of departure from this classical algorithm is that the point we center our gradient estimator at ($\xfull$) is allowed to differ from the initial iterate ($\xinit$). Setting $\xfull$ to be the point $\xprev$ of $\ApproxProx$ allows us to efficiently meet our relaxed error criterion.  %

\begin{corollary}[$\ApproxProx$ for finite-sum minimization]\label{coro:approxprox-fs}
Given finite-sum problem~\eqref{def:problem-fs}, points $s, \xinit, \xfull \in \xset$, and $\lambda \in (0, L]$, $\SVRGone$ (\cref{alg:SVRG}) with  $\phi_i(x) \defeq f_i(x) + \frac{\lambda}{2}\norm{x-s}^2$, $\eta \le \frac{1}{32L}$, and $T=\lceil\tfrac{32}{\eta \lambda}\rceil  = O\Par{\frac{L}{\lambda}}$ implements $\ApproxProx_{F,\lambda}(s;\xinit,\xfull)$ (\cref{def:approx-prox}) using $O(n+L/\lambda)$ gradient queries.
\end{corollary}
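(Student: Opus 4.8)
The plan is to recognize $\SVRGone$ run on $\Phi \defeq \frac1n\sum_i \phi_i = \Flam_s$ as a single epoch of variance-reduced gradient descent on the regularized objective, and to show its averaged iterate $\bx$ meets the $\ApproxProx$ criterion. First I would record the structure: each $\phi_i(x)=f_i(x)+\frac{\lambda}{2}\norm{x-s}^2$ is convex and (since $\lambda\le L$) at most $2L$-smooth, while $\Phi$ is $\lambda$-strongly-convex, and $x^\star=\prox(s)$ obeys the constrained optimality condition $\inner{\grad\Phi(x^\star)}{u-x^\star}\ge 0$ for all $u\in\xset$. I would also note the elementary inequality $V^F_{x^\star}(x)\le \Phi(x)-\Phi(x^\star)$, which follows since $\Phi-F=\frac{\lambda}{2}\norm{\cdot-s}^2$ is convex and $x^\star$ is optimal; this lets me later absorb iterate-dependent error back into the suboptimality gap.

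The crux — the one genuinely new ingredient relative to a textbook SVRG analysis — is that the shared quadratic regularizer cancels in the variance-reduced gradient. Writing $g_t=\grad\phi_{i_t}(x_t)-\grad\phi_{i_t}(\xfull)+\grad\Phi(\xfull)$, the $\lambda(\cdot-s)$ terms cancel, so $g_t-\grad\Phi(x_t)=[\grad f_{i_t}(x_t)-\grad f_{i_t}(\xfull)]-[\grad F(x_t)-\grad F(\xfull)]$. Bounding the variance by the second moment and applying co-coercivity to the $L$-smooth convex $f_i$ (centering the differences at $x^\star$) yields $\E\norm{g_t-\grad\Phi(x_t)}^2\le 4L\,V^F_{x^\star}(x_t)+4L\,V^F_{x^\star}(\xfull)$. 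This is what produces the $F$-Bregman divergence $V^F_{x^\star}(\xfull)$ demanded by the criterion rather than the strictly larger $V^\Phi_{x^\star}(\xfull)$, and is precisely why $\xfull=\xprev$ can serve as the variance-reduction center.

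Next I would run the standard projected one-epoch analysis. For each step I combine the non-expansive projection bound $\E\norm{x_{t+1}-x^\star}^2\le\norm{x_t-x^\star}^2-2\eta\inner{\grad\Phi(x_t)}{x_t-x^\star}+\eta^2\E\norm{g_t}^2$ with $\lambda$-strong-convexity, $\inner{\grad\Phi(x_t)}{x_t-x^\star}\ge\Phi(x_t)-\Phi(x^\star)+\frac{\lambda}{2}\norm{x_t-x^\star}^2$. To control the noise without introducing a spurious $\norm{\grad\Phi(x^\star)}^2$ term in the constrained setting, I would split the stochastic error against the noiseless projected step $\bar x_{t+1}=\proj_\xset(x_t-\eta\grad\Phi(x_t))$: the mean-zero part vanishes in expectation and the residual is controlled by $\eta\,\E\norm{g_t-\grad\Phi(x_t)}^2$ via non-expansiveness, so that only the variance (not the full second moment) enters. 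With $\eta\le\tfrac{1}{32L}$ the $x_t$-dependent variance contribution $4L\,V^F_{x^\star}(x_t)\le 4L(\Phi(x_t)-\Phi(x^\star))$ is absorbed into the left-hand side, leaving a clean telescoping inequality. Summing over $t=0,\dots,T-1$ (with $x_0=\xinit$), dividing by $T$, and applying Jensen to $\bx$ gives a bound of the form $\E\Phi(\bx)-\Phi(x^\star)\le\frac{\norm{\xinit-x^\star}^2}{2\eta T}+O(\eta L)\,V^F_{x^\star}(\xfull)$.

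Finally I would calibrate the parameters. Since $\norm{\xinit-x^\star}^2=2\Veu_{x^\star}(\xinit)$ and the step count forces $\eta T\lambda\ge 32$, the first term is at most $\frac{\lambda}{32}\Veu_{x^\star}(\xinit)$, while $\eta L\le\tfrac{1}{32}$ keeps the second below $\tfrac18 V^F_{x^\star}(\xfull)$; together these give exactly the $\ApproxProx$ bound $\E\Flam_s(\bx)-\Flam_s(x^\star)\le\frac{\lambda\Veu_{x^\star}(\xinit)+V^F_{x^\star}(\xfull)}{8}$ (recalling $\Flam_s=\Phi$). The query count is immediate: the full gradient $\grad\Phi(\xfull)$ costs $n$ component evaluations and each of the $T=\lceil 32/(\eta\lambda)\rceil=O(L/\lambda)$ inner steps costs two, for $O(n+L/\lambda)$ total. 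I expect the main obstacle to be the bookkeeping in the constrained one-epoch telescoping — keeping the noise as a variance rather than a second moment and tracking constants carefully enough to land on the factor $1/8$ — whereas the quadratic-cancellation observation, though essential, is short.
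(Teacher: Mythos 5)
Your proposal is correct and follows essentially the same route as the paper: the heart of both arguments is the observation that the quadratic regularizer cancels in the variance-reduced gradient, so that co-coercivity of the $L$-smooth $f_i$ centered at $x^\star$ bounds the variance by $4L\bigl(V^F_{x^\star}(\xfull)+V^{F^\lambda_s}_{x^\star}(x_t)\bigr)$ (the paper's \Cref{lem:SVRG-grad}), after which a one-epoch telescoping, absorption of the $x_t$-dependent variance into the suboptimality gap via $\eta\le\frac{1}{32L}$, Jensen on the averaged iterate, and the calibration $\eta T\ge 32/\lambda$ yield the $\ApproxProx$ bound exactly as in \Cref{prop:SVRG}. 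The only difference is cosmetic: you re-derive the constrained per-step recursion yourself via the mean-zero splitting $\langle\Delta_t,x_{t+1}-x^\star\rangle=\langle\Delta_t,x_{t+1}-\bar x_{t+1}\rangle+\langle\Delta_t,\bar x_{t+1}-x^\star\rangle$ (keeping the gap at $x_t$, which incidentally avoids the paper's index-misaligned residual term $\frac{4\eta L}{T}V^{F^\lambda_s}_{x^\star}(\xinit)$), whereas the paper simply cites the progress lemma of \citet{xiao2014proximal} with the gap at $x_{t+1}$.
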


\Cref{coro:approxprox-fs} follows from a slightly more general bound on $\SVRGone$ (\Cref{prop:SVRG} in \Cref{apdx:fs}). Below, we briefly sketch its proof. 

\notarxiv{\textbf{Proof sketch for \Cref{coro:approxprox-fs}.}}
\arxiv{\begin{psketch}} We begin by carefully bounding the variance of the gradient estimator $g_t$. In \Cref{lem:SVRG-grad} in the appendix we show that 
\begin{flalign}
	\norm{g_t - \grad F(x_t)}^2 \le 4L \cdot \left( V^F_{\xopt}\Par{\xfull} + V^{F^\lambda_s}_{\xopt}\Par{x_t}\right).\label{eq:proof-sketch-svrg}
\end{flalign}
 Next, standard analysis on variance reduced stochastic gradient method~\citep{xiao2014proximal} shows that
\begin{flalign*}
& \mathbb{E}\norm{x_{t+1} - x^\star}^2 \le \norm{x_t - x^\star}^2 \\
& \hspace{1em}- 2 \eta \mathbb{E} \left[  F^\lambda_s\Par{x_{t+1}}-F^\lambda_s\Par{\xopt} \right] + \eta^2 \norm{g_t - \grad F(x_t)}^2.
\end{flalign*}
Plugging in the variance bound~\eqref{eq:proof-sketch-svrg} at iteration $x_t$, rearranging terms and telescoping for $t\in[T]-1$, we obtain
\begin{flalign*}
 & \frac{1}{T} \E \sum_{t=1}^{T}\left( F_s^{\lambda}(x_t) - F_s^\lambda\Par{x^\star} \right)\\
& \hspace{1em}\le  \frac{2}{\eta T} \Veu_{x^\star}\Par{\xinit} + 4 \eta LV^F_{x^\star}\Par{\xfull}+\frac{4\eta L}{T}V^{F^\lambda_s}_{\xopt}\Par{\xinit}\\
& \hspace{1em}\stackrel{(i)}{\le} \frac{1}{8}V^F_{x^\star}\Par{\xfull}+\Par{\frac{2}{\eta T}+\frac{4\eta L(L+\lambda)}{T}}\Veu_{\xopt}\Par{\xinit}\\
& \hspace{1em}\stackrel{(ii)}{\le} \frac{1}{8}\lambda\Veu_{\xopt}\Par{\xinit}+ \frac{1}{8}V^F_{x^\star}\Par{\xfull},
\end{flalign*}
where we use  $(i)$ smoothness of $F^\lambda_s$, and $(ii)$ the choices of $\eta$ and $T$. Noting that $\bx = \frac{1}{T}\sum_{t\in[T]}x_{t}$ satisfies $F_s^\lambda(\bx) \le \frac{1}{T}\sum_{t=1}^{T}\left( F_s^{\lambda}(x_t)\right)$ by convexity concludes the proof sketch.
\arxiv{\end{psketch}}

\notarxiv{
	\begin{algorithm2e}[t]
		\caption{$\SVRGone$}
		\label{alg:SVRG}
		\DontPrintSemicolon
		\codeInput $\Phi = \frac{1}{n}\sum_{i\in[n]}\phi_i$ (with component gradient oracles), center point $\xfull$, initial point $\xinit$, step-size $\eta$, iteration number $T$

		Query gradient $\nabla \fun(\xfull) = \frac{1}{n}\sum_{i\in[n]}\nabla \phi_i( \xfull)$ \;
		$x_0 \gets \xinit$ \Comment*{KEY: $\xinit$, $\xfull$ may be different}
		\For{$t=0$ {\bfseries{\textup{to}}} $T-1$}{
			Sample $i_t\sim \mathrm{Unif}[n]$ \;
			$g_t \gets \nabla \phi_{i_t}(x_t)-\nabla \phi_{i_t} (\xfull) + \nabla \fun(\xfull)$ \;
			$x_{t+1} \gets \proj_{\xset}\Par{x_t-\eta g_t}$ \;
		}
		\codeReturn $\bx = \frac{1}{T}\sum_{t\in[T]}x_{t}$ 
	\end{algorithm2e}
}

\notarxiv{\textbf{Warm-start implementation.}}
\arxiv{\paragraph{Warm-start implementation.}}
We now explain how to reuse $\SVRGone$ for obtaining a valid warm-start for $\ouralg$ (\Cref{def:warmstart}).  Given any initial iterate with function error $\Delta$, we show that a careful choice of step size for $\SVRGone$  leads to a point with suboptimality $\sqrt{\frac{LR^2 \Delta}{n}}$ in $O(n)$ gradient computations (\Cref{lem:SVRG-warmstart}). Repeating this procedure $O(\log \log n)$ times produces a point with suboptimality $O(LR^2/n)$, which is a valid warm-start for $\lambda=L/n$. We remark that \citet{song2020variance} achieve the same $O(n\log \log n)$ complexity with a different procedure that entails changing the recursion for $\alpha_t$ in \Cref{line:catalyst:convex_start} of \Cref{alg:catalyst}. We believe that our approach is conceptually simpler and might be of independent interest.

\begin{restatable}[$\WarmStart$-$\SVRG$ for finite-sum minimization]{corollary}{warmstartfs}\label{coro:SVRG-warmstart}
Consider problem~\eqref{def:problem-fs} with minimizer $\xopt$, smoothness parameter $L$, and some initial point $\xinit$ with $R=\norm{\xinit-\xopt}$, for any $\lambda\ge L/n$, \Cref{alg:SVRG-warmstart} with $T = 32n$, $K = \log\log n$ implements $\WarmStart_{F,\lambda}(R^2)$ with $O(n\log\log n)$ gradient queries.%
\end{restatable}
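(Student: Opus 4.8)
The plan is to establish a single-stage contraction lemma for $\SVRGone$ and then iterate it $K=\log\log n$ times. The key building block (Lemma~\ref{lem:SVRG-warmstart} referenced in the text) should say: if we run $\SVRGone$ on $F$ with center $\xfull = \xinit$, initial point $\xinit$, a suitably chosen step size $\eta$, and $T=32n$ inner iterations, then starting from a point with suboptimality $\Delta \defeq F(\xinit) - F(\xopt)$ we obtain an output $\bx$ with expected suboptimality roughly $\sqrt{L R^2 \Delta / n}$. First I would prove this contraction. Since here we are minimizing $F$ itself (not a regularized $F^\lambda_s$), I set $\lambda = 0$ in the standard $\SVRGone$ analysis, so the strong-convexity-driven telescoping is unavailable and I must instead use the suboptimality $\Delta$ of the starting point to control $\Veu_{\xopt}(\xinit)$ only weakly. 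The route is the same variance-reduction inequality sketched for Corollary~\ref{coro:approxprox-fs}: combining the per-step bound $\E\norm{x_{t+1}-\xopt}^2 \le \norm{x_t - \xopt}^2 - 2\eta\,\E[F(x_{t+1}) - F(\xopt)] + \eta^2\norm{g_t - \nabla F(x_t)}^2$ with the variance bound~\eqref{eq:proof-sketch-svrg} (which with $\lambda=0$, $s$ irrelevant, and $\xfull = \xinit$ becomes $\norm{g_t - \nabla F(x_t)}^2 \le 4L(V^F_{\xopt}(\xinit) + V^F_{\xopt}(x_t))$). Telescoping over $t\in[T]$ and using convexity to pass to the average $\bx$ yields $\E[F(\bx) - F(\xopt)] \le \tfrac{1}{\eta T}\Veu_{\xopt}(\xinit) + O(\eta L)\cdot\Delta$, after absorbing the $V^F_{\xopt}(x_t)$ term into the telescoping sum when $\eta L$ is small enough.

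The crux is the step-size choice. The first term $\tfrac{1}{\eta T}\Veu_{\xopt}(\xinit) = \tfrac{R^2}{2\eta T}$ decreases in $\eta$ while the second $O(\eta L \Delta)$ increases in $\eta$; balancing them with $T = \Theta(n)$ gives $\eta \asymp \sqrt{R^2/(nL\Delta)}$ and target suboptimality $\sqrt{L R^2 \Delta / n}$. However, we must also respect the variance-reduction stability constraint $\eta \lesssim 1/L$, so the balance is only valid once $\Delta \gtrsim L R^2/n$; below that threshold a single epoch with $\eta = \Theta(1/L)$ already suffices. This is why the step size in Algorithm~\ref{alg:SVRG-warmstart} is set to $\eta_{k+1} = (8 L n^{2^{-k-1}})^{-1}$: the exponent $2^{-k-1}$ encodes exactly the doubly-geometric sharpening of accuracy.

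Next I would run the recursion. Define $\Delta_k \defeq \E[F(x^{(k)}) - F(\xopt)]$, with $\Delta_0 \le \tfrac{L}{2}R^2$ by $L$-smoothness of $F$ at $\xopt$. The contraction lemma gives a recurrence of the form $\Delta_{k+1} \lesssim \sqrt{L R^2 \Delta_k / n}$, i.e. $\Delta_{k+1}/(LR^2) \lesssim (\Delta_k/(LR^2))^{1/2} \cdot n^{-1/2}$. Writing $\delta_k \defeq \Delta_k/(LR^2)$, this is $\delta_{k+1} \lesssim (\delta_k / n)^{1/2}$, whose fixed point is $\Theta(1/n)$ and which converges doubly-exponentially: after $k$ steps $\delta_k \lesssim \max\{n^{-1}, \delta_0^{\,2^{-k}}\}$ up to constants. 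Hence $K = \log\log n$ iterations drive $\delta_K = O(1/n)$, giving $\Delta_K = O(LR^2/n) \le \lambda R^2$ for any $\lambda \ge L/n$, which is precisely the $\WarmStart_{F,\lambda}(R^2)$ guarantee of Definition~\ref{def:warmstart}. The total gradient cost is $K$ epochs, each using $O(n + T) = O(n)$ gradient queries (one full-batch gradient plus $T = 32n$ sampled gradients), for a total of $O(n\log\log n)$. The main obstacle is making the single-stage contraction and the step-size tuning rigorous — in particular verifying that the stability constraint $\eta_{k+1} \le 1/(8L)$ is compatible with the balancing argument at every level $k$ and that the $V^F_{\xopt}(x_t)$ variance term can genuinely be telescoped rather than merely bounded, since an overly loose treatment would turn the clean $\sqrt{\cdot}$ contraction into something that no longer converges in $\log\log n$ steps.
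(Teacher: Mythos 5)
Your proposal matches the paper's proof essentially step for step: the single-stage contraction you posit is exactly Lemma~\ref{lem:SVRG-warmstart}, obtained by invoking Proposition~\ref{prop:SVRG} (whose proof performs precisely the telescoping absorption of the $V^F_{\xopt}(x_t)$ variance term you flag as the crux) with the balanced step size $\eta_{k+1}=\prn[\big]{8Ln^{2^{-k-1}}}^{-1}$, after which the paper runs the same doubly-exponential recursion, written there as $c^{(k+1)}=\sqrt{c^{(k)}/n}$ with $c^{(k)}=n^{-1+2^{-k}}$, for $K=\log\log n$ epochs of $O(n)$ gradient queries each. Your two concerns resolve just as you anticipated: the stability constraint is automatic since $n^{2^{-k-1}}\ge 1$ gives $\eta_{k+1}\le 1/(8L)$ at every level, and the $V^F_{\xopt}(x_t)$ terms are genuinely telescoped (via first-order optimality of $\xopt$) rather than crudely bounded, preserving the clean $\sqrt{\cdot}$ contraction.
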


By implementing $\ApproxProx$ and $\WarmStart$ using $\SVRGone$, $\ouralg$ provides the following state-of-the-art complexity bound for finite-sum problems.

\begin{restatable}[$\ouralg$ for finite-sum minimization]{theorem}{OptCatfs}\label{thm:OptCatfs}
Given a finite-sum problem~\eqref{def:problem-fs} on domain $\xset$ with diameter $R$, $\ouralg$ (\Cref{alg:catalyst}) with parameters $\lambda = \frac{L}{n}$ and $T = O(R\sqrt{Ln^{-1}\epsilon^{-1}})$, using $\SVRGone$ for $\ApproxProx$, and $\WarmStart$-$\SVRG$ for $\WarmStart$, outputs an $x$ such that 
$\E F(x)-\min_{x'\in\xset}F(x')\le \epsilon$. The total gradient query complexity is $O(n\log\log n+\sqrt{nLR^2 /\eps})$ in expectation. Further, if $F$ is $\gamma$-strongly-convex with $\gamma \le  O(L/n)$, restarted $\ouralg$ (\Cref{alg:catalyst-sc}) finds an $\epsilon$-approximate solution using $O(n \log \log n + \sqrt{nL/\gamma}\log(LR^2/(n\epsilon))$ gradient queries in expectation.
\end{restatable}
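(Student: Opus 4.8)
The plan is to assemble the theorem from the four ingredients already established: the generic \ouralg\ complexity bound (\Cref{thm:improved-catalyst}), the \MLMC\ reduction (\Cref{lem:MLMC}), the finite-sum \ApproxProx\ implementation (\Cref{coro:approxprox-fs}), and the warm-start implementation (\Cref{coro:SVRG-warmstart}). First I would fix $\lambda = L/n$ and observe $\lambda \in (0,L]$, so \Cref{coro:approxprox-fs} applies: each call $\ApproxProx_{F,\lambda}(s;\xinit,\xfull)$ is realized by $\SVRGone$ on the regularized components $\phi_i = f_i + \frac{\lambda}{2}\norm{\cdot - s}^2$ at cost $O(n + L/\lambda) = O(n)$ gradient queries. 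I would then verify that every $\ApproxProx$ call issued inside \Cref{alg:catalyst} is of the form covered by \Cref{coro:approxprox-fs}: the direct call $\ApproxProx_{F,\lambda}(s_t; s_t, x_t)$ in \Cref{line:catalyst:approx}, and all calls generated by the \MLMC\ estimator (\Cref{alg:unbiased-min-MLMC}) used for $\UnbiasedProx$, identifying the variance-reduction center $\xfull$ with the reference point $\xprev$. This is precisely the ``single point of departure'' of letting the SVRG center differ from the initial iterate.

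Next I would count the calls. By \Cref{thm:improved-catalyst} with $\lambda = L/n$, \ouralg\ runs for $O(\sqrt{\lambda R^2/\epsilon}) = O(R\sqrt{L/(n\epsilon)})$ iterations, which gives the stated $T$; and, using \MLMC\ with $p=1/2$, $j_0=2$ so that \Cref{lem:MLMC} makes each $\UnbiasedProx$ cost $O(1)$ calls to $\ApproxProx$ in expectation, the total expected number of $\ApproxProx$ calls is $O(\sqrt{\lambda R^2/\epsilon})$. Multiplying by the $O(n)$ per-call cost yields expected $O(n\cdot R\sqrt{L/(n\epsilon)}) = O(\sqrt{nLR^2/\epsilon})$ gradient queries for the accelerated loop. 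Adding the single $\WarmStart$ invocation, whose cost is $O(n\log\log n)$ by \Cref{coro:SVRG-warmstart} (valid since $\lambda=L/n$ meets the requirement $\lambda \ge L/n$ and $\norm{\xinit-\xopt}\le R$), produces the claimed $O(n\log\log n + \sqrt{nLR^2/\epsilon})$ total.

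For the strongly-convex addendum I would swap \Cref{thm:improved-catalyst} for \Cref{prop:catalyst-sc}: with $\gamma \le O(L/n)$ the hypothesis $\lambda \ge \gamma$ is met at $\lambda = L/n$, and restarted \ouralg\ makes $O(\sqrt{\lambda/\gamma}\log(LR^2/\epsilon)) = O(\sqrt{L/(n\gamma)}\log(LR^2/\epsilon))$ expected $\ApproxProx$ calls. Multiplying by the $O(n)$ per-call cost and adding the one $\WarmStart$ call gives $O(n\log\log n + \sqrt{nL/\gamma}\log(LR^2/(n\epsilon)))$, matching the statement up to the constant inside the logarithm, which reflects tracking the $\lambda=L/n$ dependence through the restart schedule.

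Since the heavy lifting---the potential analysis, the unbiasedness and variance guarantee of \MLMC, and the SVRG variance bound---is carried out in the cited results, the real work here is bookkeeping. The step I expect to require the most care is confirming that the cost arithmetic collapses correctly: the product of the iteration count $\propto \sqrt{\lambda}$ and the per-iteration cost $\propto n + L/\lambda$ is minimized (and balanced against the $n\log\log n$ warm-start term) exactly at $\lambda = L/n$, where both the $n\sqrt{\lambda}$ and $L/\sqrt{\lambda}$ contributions equal $\sqrt{nL}$. I would also double-check that the \MLMC\ overhead stays $O(1)$ in expectation so it does not inflate the leading $\sqrt{nLR^2/\epsilon}$ term, and that the argument-matching between \Cref{def:approx-prox} (which takes $\xinit$ and $\xprev$) and the $\SVRGone$ interface (which takes $\xinit$ and $\xfull$) is consistent across both the direct and the \MLMC-internal calls, since a mismatch there would break the relaxed error criterion on which the acceleration relies.
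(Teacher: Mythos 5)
Your proposal is correct and follows essentially the same route as the paper's proof: both assemble the result by combining \Cref{thm:improved-catalyst} (resp.\ \Cref{prop:catalyst-sc} for the strongly-convex case) with \Cref{lem:MLMC}, \Cref{coro:approxprox-fs}, and \Cref{coro:SVRG-warmstart}, then multiply the $O(\sqrt{\lambda R^2/\epsilon})$ expected $\ApproxProx$ calls by the $O(n+L/\lambda)=O(n)$ per-call cost at $\lambda=L/n$ and add the $O(n\log\log n)$ warm-start term. Your extra checks---that the \MLMC-internal calls fit the $\SVRGone$ interface with $\xfull=\xprev$, and that the $\log(LR^2/(n\epsilon))$ argument arises from tracking $\lambda=L/n$ through the restart schedule---are correct and, if anything, more explicit than the paper's own brief argument.
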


\notarxiv{	
	\begin{algorithm2e}[t]
		\caption{$\WarmStart$-$\SVRG$}
		\label{alg:SVRG-warmstart}
		\DontPrintSemicolon
		\codeInput $F = \frac{1}{n}\sum_{i\in[n]}f_i$, smoothness $L$, point $\xinit$\;
		\codeParameter Iteration number $T$\;
		$x^{(0)} \gets \xinit$ \; 
		\For{$k=0$ \codeStyle{to} $K-1$}{
			$\eta_{k+1}\gets \prn[\big]{8Ln^{2^{-k-1}}}^{-1}$ \;
			$x^{(k+1)}\gets \SVRGone\Par{ F, x^{(k)}, x^{(k)}, \eta_{k+1}, T}$\;
		}
		\codeReturn $x^{(K)}$
	\end{algorithm2e}
}

\subsection{Empirical Results}\label{ssec:experiments}

In this section, we provide an empirical performance comparisons between  RECAPP and SVRG~\cite{johnson2013accelerating} and Catalyst~\cite{lin2017catalyst}. Specifically, we compare to the C1* variant of Catalyst-SVRG, which~\citet{lin2017catalyst} report to have the best performance in practice. We implemented all algorithms in Python, using the Numba~\cite{lam2015numba} package for just-in-time compilation which significantly improved runtime. Our code is available at: \href{https://github.com/yaircarmon/recapp}{\texttt{github.com/yaircarmon/recapp}}.

\paragraph{Task and datasets.} We consider logistic regression on three datasets from libSVM~\cite{libsvm}: covertype ($n=581,012$, $d=54$), real-sim ($n=72,309$, $d=20,958$), and a9a ($n=32,561$, $d=123$). For each dataset we rescale the feature vectors to using unit Euclidean norm so that each $f_i$ is exactly $0.25$-smooth. We do not add $\ell_2$ regularization to the logistic regression objective.

\notarxiv{We defer readers to~\Cref{ssec:experiments-more} for detailed implementation of the algorithms and parameter tuning.}

\arxiv{
\paragraph{SVRG implementation.} We implement the SVRG iterates as in \Cref{alg:SVRG}, using $T=2n$ and $\eta=4$ (i.e., the inverse of the smoothness of each function). However, instead of outputting the average of all iterates, we return the average of the final $T/2=n$ iterates.

\paragraph{Catalyst implementation.} Our implementation follows closely Catalyst C1* as described in~\cite{lin2017catalyst}, where for the subproblem solver we use repeatedly called \Cref{alg:SVRG} with the parameters and averaging modification described above, checking the C1 termination criterion between each call.

\paragraph{RECAPP implementation.} Our RECAPP implementation follows \Cref{alg:catalyst,alg:unbiased-min-MLMC}, with \Cref{alg:SVRG} and \Cref{alg:SVRG-warmstart} implemented \ApproxProx and \WarmStart, respectively, and \Cref{alg:SVRG} configured and modified and described above. In \Cref{alg:unbiased-min-MLMC} we set the parameters $j_0=0$ and we test $p\in\{0,0.1,0.25,0.5\}$. The setting $p=0$ which corresponds to setting $\tilde{x}_{t+1} = x_{t+1}$ in~\Cref{alg:catalyst}) is a baseline meant to test whether MLMC is helpful at all. For $p>0$ we change the parameter $T$ in \Cref{alg:SVRG} such that the \emph{expected} amount of gradient computations is the same as for $p=0$. Slightly departing from the pseudocode of \Cref{alg:catalyst}, we take $x_{t+1}$ to be $x^{(J)}$ computed in \Cref{alg:unbiased-min-MLMC}, rather than $x^{(0)}$, since it is always a more accurate proximal point approximation. We note that our algorithm still has provable guarantees (with perhaps different constant factors) under this configuration.

\paragraph{Parameter tuning.} For RECAPP and Catalyst, we tune the proximal regularization parameter $\lambda$ (called $\kappa$ in~\cite{lin2017catalyst}). For each problem and each algorithm, we test $\lambda$ values of the form $\alpha L / n$, where $L=0.25$ is the objective smoothness, $n$ is the dataset size and $\alpha$ in the set $\{0.001, 0.003, 0.01, 0.03, 0.1, 0.3, 1.0, 3.0, 10.0\}$. We report results for the best $\lambda$ value for each problem/algorithm pair. 
}

\paragraph{Findings.}
We summarize our experimental findings in~\Cref{fig:exp}. The top row of \Cref{fig:exp} shows that RECAPP is competitive with Catalyst C1*: on covertype RECAPP is significantly faster, on a9a it is about the same, and on real-sim it is a bit slower. Note that Catalyst C1* incorporates carefully designed heuristics and parameters for choosing the SVRG initialization and stopping time, while the RECAPP implementation directly follows our theoretical development. The bottom row of \Cref{fig:exp} shows that $p\in\{0.1,0.25\}$ provides a modest but fairly consistent improvement over the no-MLMC baseline. This provides evidence that MLMC might be beneficial in practice. %

\begin{figure}
	\arxiv{\includegraphics[width = \textwidth]{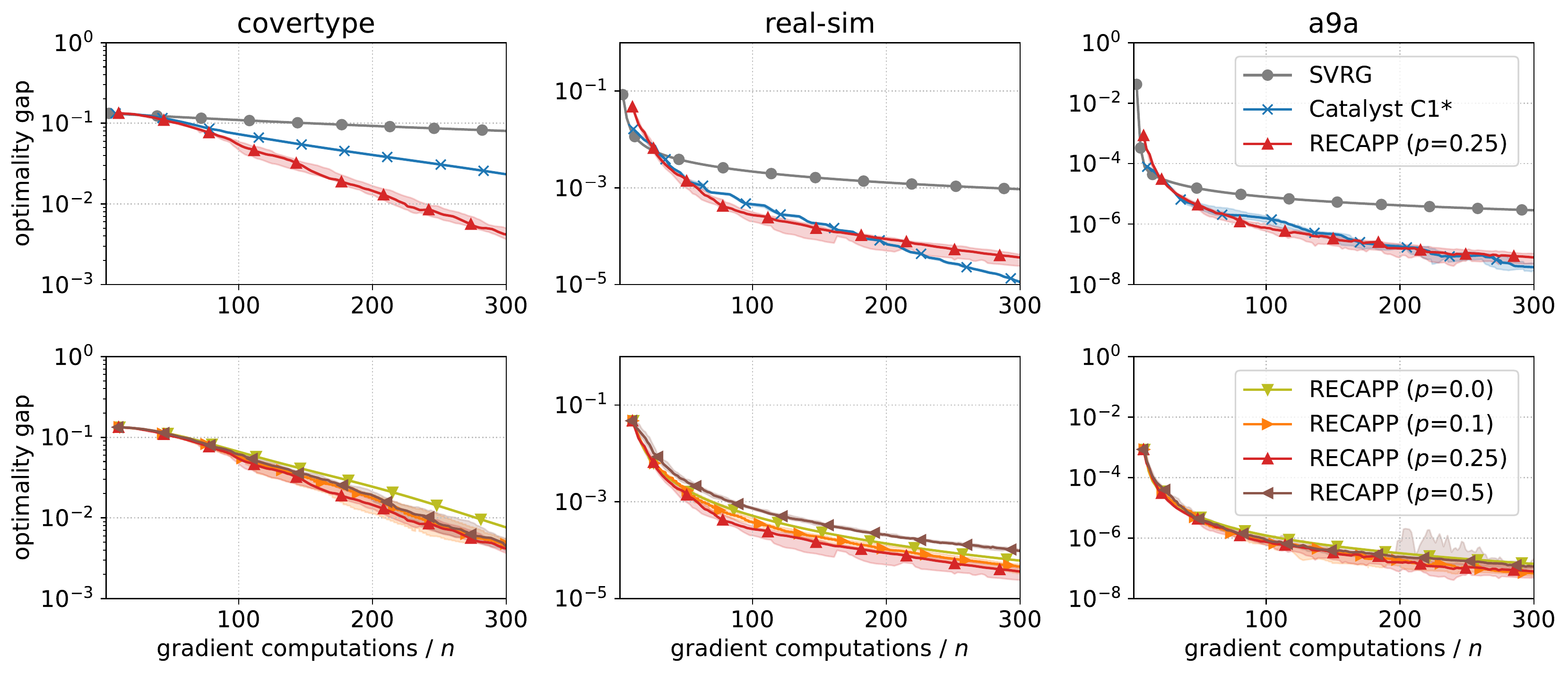}}
	\notarxiv{\includegraphics[width =\columnwidth]{figures/experiment.pdf}}
	\caption{Empirical evaluation of RECAPP on finite sum problems. 
		Columns represent different datasets, the top row compares RECAPP to SVRG and Catalyst, and the bottom row compares different MLMC $p$ parameters\notarxiv{ ($p=0$ corresponds to $\tilde{x}_{t+1} = x_{t+1}$ in~\Cref{alg:catalyst}, a baseline meant to test whether MLMC is helpful at all, see~\Cref{ssec:experiments-more} for more details)}. Solid lines show median over 20 seeds, and shaded regions show interquartile range.}\label{fig:exp}
\end{figure}
\section{Max-structured Minimization}\label{sec:minimax}

In this section, we consider the following problem of the max-structured function minimization:
\begin{gather}
	\minimize_{x\in\xset} F(x) \defeq \max_{y\in\yset}f(x,y),\label{def:problem-minimax}
\end{gather}
where $f:\xset\times\yset\to \R$ is $L$ smooth, convex in $x$ (for every $y$) and $\mu$-strongly-concave in $y$ (for every $x$).

We solve \eqref{def:problem-minimax} by combining $\ouralg$ with variants of mirror-prox method~\citep{nemirovski2004prox}, shown in~\Cref{alg:mirror-prox}. Given a convex-concave $L$-smooth objective $\phi$, $\MirrorProx$ (\Cref{alg:mirror-prox}) starts from initial point $\xinit, \yinit$, and finds in $O(T)$ gradient queries an approximate solution $x_T, y_T$ satisfying for any $x,y\in\xset\times\yset$,
\begin{align}
	\phi(x_T,y)- \phi(x,y_T)\le \frac{L}{T}\Par{\Veu_{\xinit}\Par{x}+\Veu_{\yinit}\Par{y}}.\label{eq:mirror-prox-guarantee-informal}
\end{align}
Our main observation is that applying such a mirror-prox method to the regularized objective $\phi(x,y) = f(x,y)+\mu \Veu_s(x)$, initialized at $(\xinit, \brorcl_F(\xprev))$ where we define the best response oracle $\brorcl_F(x)\defeq \argmax_{y\in\yset}f(x,y)$, outputs solution satisfying the relaxed error criterion of $\ApproxProx_{F,\mu}$ after $T = O(L/\mu)$ steps. We formalize this observation in~\Cref{lem:approxprox-minimax}.

\arxiv{
\begin{algorithm2e}[t]
	\caption{$\MirrorProx$}
	\label{alg:mirror-prox}
	\DontPrintSemicolon
	\codeInput  Gradient oracle for $\phi:\xset\times\yset\to \R$,  smoothness $L$, points $\xinit,\yinit$, iteration number $T$
	\Comment*{To implement $\ApproxProx_{F,\mu}$, we let $\yinit \approx \argmax_{y\in\yset}f(x,y)$}
	\codeParameter Step-size $\eta$\;
	Initialize $x_0\gets \xinit$, $y_0\gets \yinit$~\label{line:minimax-br-agd}\;
	\For{$t=0$ {\bfseries{\textup{to}}} $T-1$}{
		$u_{t}\gets \arg\min_{x\in\xset}\langle\eta \nabla_x\phi(x_t,y_t),x\rangle +\Veu_{x_{t}}(x)$\;
		$v_{t}\gets \arg\min_{y\in\yset}\langle-\eta\nabla_y\phi(x_t,y_t),y\rangle +\Veu_{y_{t}}(y)$\;
		$x_{t+1}\gets \arg\min_{x\in\xset}\langle\eta \nabla_x\phi(u_{t},v_{t}),x\rangle +\Veu_{x_{t}}(x)$\;
		$y_{t+1}\gets \arg\min_{y\in\yset}\langle-\eta\nabla_y\phi(u_{t},v_{t}),y\rangle +\Veu_{y_{t}}(y)$\;
	}
	\codeReturn  $x_T, y_T$
\end{algorithm2e}
}

\begin{restatable}[$\ApproxProx$ for max-structured minimization]{lemma}{lemapproxproxminimax}\label{lem:approxprox-minimax}
Given max-structured minimization problem~\eqref{def:problem-minimax} and an oracle $\brorcl_F(x)$ that outputs $\ybr_x\defeq \max_{y\in\yset}f(x,y)$ for any $x$, $\MirrorProx$ in~\Cref{alg:mirror-prox} initialized at $(\xinit,\brorcl_F(\xprev))$ implements the procedure $\ApproxProx_{F,\mu}(s;\xinit,\xprev)$ using a total of $O\Par{L/\mu}$ gradient queries and one call to $\brorcl_F(\cdot)$.
\end{restatable}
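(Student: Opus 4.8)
The plan is to run $\MirrorProx$ on the regularized saddle objective $\phi(x,y) \defeq f(x,y) + \mu\Veu_s(x)$ and show its output $(x_T,y_T)$ meets the $\ApproxProx_{F,\mu}$ criterion~\eqref{approx-prox-cond} (instantiating \Cref{def:approx-prox} with $\lambda=\mu$). First I would record the basic structure: $\phi$ is convex in $x$, concave in $y$, and $(L+\mu)$-smooth, so the guarantee~\eqref{eq:mirror-prox-guarantee-informal} applies with constant $L+\mu\le 2L$; moreover $\max_{y\in\yset}\phi(x,y)=F^\mu_s(x)$, the saddle point has $x$-component $\xopt=\prox(s)$ and $y$-component $\yopt=\brorcl_F(\xopt)=\ybr_{\xopt}$, and the initialization is $(\xinit,\yinit)$ with $\yinit=\ybr_{\xprev}$.

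The core step is to convert the saddle-gap bound into a bound on $F^\mu_s(x_T)-F^\mu_s(\xopt)$. Writing $\ybr_{x_T}=\argmax_{y\in\yset}\phi(x_T,y)$, I would instantiate~\eqref{eq:mirror-prox-guarantee-informal} at $x=\xopt$ and $y=\ybr_{x_T}$. Since $\phi(x_T,\ybr_{x_T})=F^\mu_s(x_T)$ and $\phi(\xopt,y_T)\le F^\mu_s(\xopt)$, this yields
\begin{equation*}
F^\mu_s(x_T)-F^\mu_s(\xopt)\le \frac{L+\mu}{T}\Par{\Veu_{\xinit}(\xopt)+\tfrac12\norm{\ybr_{\xprev}-\ybr_{x_T}}^2}.
\end{equation*}
The Euclidean term $\Veu_{\xinit}(\xopt)=\Veu_{\xopt}(\xinit)$ already matches the first term of~\eqref{approx-prox-cond}; the difficulty is the $y$-distance, which I would split as $\norm{\ybr_{\xprev}-\ybr_{x_T}}^2\le 2\norm{\ybr_{\xprev}-\yopt}^2+2\norm{\ybr_{x_T}-\yopt}^2$.

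The crux, and what I expect to be the main obstacle, is bounding these two $y$-distances \emph{without} invoking the lossy $O(L/\mu)$ best-response Lipschitz constant (which would force $T=\Omega((L/\mu)^2)$). The key is a pair of self-bounding inequalities coming from $\mu$-strong-concavity of $f(x,\cdot)$: for any $x$, first-order optimality of $\ybr_x$ gives $f(x,\ybr_x)\ge f(x,\yopt)+\tfrac{\mu}{2}\norm{\ybr_x-\yopt}^2$, and adding $\mu\Veu_s(x)$ while using that $\phi(\cdot,\yopt)$ is minimized at $\xopt$ with value $F^\mu_s(\xopt)$ yields
\begin{equation*}
\norm{\ybr_x-\yopt}^2\le \tfrac{2}{\mu}\Par{F^\mu_s(x)-F^\mu_s(\xopt)}\quad\text{for all }x.
\end{equation*}
The analogous computation with the convex function $g(x)\defeq f(x,\yopt)$, noting $\grad g(\xopt)=\grad F(\xopt)$ by Danskin's theorem and $V^g_{\xopt}(\xprev)\ge 0$, gives $\norm{\ybr_{\xprev}-\yopt}^2\le \tfrac{2}{\mu}V^F_{\xopt}(\xprev)$. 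Applying the first inequality at $x=x_T$ makes $\norm{\ybr_{x_T}-\yopt}^2$ proportional to the very quantity $E\defeq F^\mu_s(x_T)-F^\mu_s(\xopt)$ we are trying to bound.

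Substituting both bounds into the gap inequality produces the self-referential estimate
\begin{equation*}
E\le \frac{L+\mu}{T}\Par{\Veu_{\xopt}(\xinit)+\tfrac{2}{\mu}V^F_{\xopt}(\xprev)+\tfrac{2}{\mu}E}.
\end{equation*}
Finally I would choose $T=O(L/\mu)$ large enough that the coefficient of $E$ on the right is at most $\tfrac12$, absorb that term, and pick the constant so the residual coefficients of $\Veu_{\xopt}(\xinit)$ and $V^F_{\xopt}(\xprev)$ are at most $\tfrac{\mu}{8}$ and $\tfrac18$, exactly matching~\eqref{approx-prox-cond}. Since $\MirrorProx$ is deterministic given an exact $\brorcl_F$, the expectation in~\eqref{approx-prox-cond} holds trivially, and the cost is the $T=O(L/\mu)$ mirror-prox steps (each $O(1)$ gradient queries of $f$) plus the single call to $\brorcl_F(\xprev)$ used for the warm-started $\yinit$.
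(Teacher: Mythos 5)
Your proposal is correct and takes essentially the same route as the paper's proof: run $\MirrorProx$ on $\phi(x,y)=f(x,y)+\mu\Veu_s(x)$, instantiate the gap bound at $\prn*{\xopt,\ybr_{x_T}}$, bound $\mu\Veu_{\yinit}\Par{\yopt}\le V^F_{\xopt}\Par{\xprev}$ via strong concavity in $y$ together with convexity of $f(\cdot,\yopt)$, and absorb the $\Veu_{\ybr_{x_T}}\Par{\yopt}$ term into the suboptimality with $T=O(L/\mu)$. Your explicit self-referential inequality in $E$ is just a reorganization of the paper's step~\eqref{eq:minimax-ineq-1}, where a $\tfrac{1}{32}$-fraction of the suboptimality is split off to cancel the same term after the triangle-inequality expansion.
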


Before providing a proof sketch for the lemma, let us remark on the cost of implementing the best response oracle. Since for any fixed $x$ the function $f(x,\cdot)$ is $\mu$-strongly-concave and $L$-smooth, we can use AGD to find an $\delta$-accurate best response $y'$ to $x$ in $O\prn*{\sqrt{\frac{L}{\mu}}\log \frac{F(x)-f(x,y')}{\delta}}$ gradient queries. Therefore, even for extremely small values of $\delta$ we can expect the best-response computation cost to be negligible compared to the $O(L/\mu)$ complexity of the mirror-prox iterations required to implement $\ApproxProx$. 

\notarxiv{\textbf{Proof sketch for \Cref{lem:approxprox-minimax}.}}
\arxiv{\begin{psketch}}
We run $\MirrorProx$ for $T=O(L/\mu)$ steps on $\phi(x,y)=f(x,y) + \mu \Veu_s(x)$. By~\eqref{eq:mirror-prox-guarantee-informal} the output $(x_T,y_T)$ satisfies for $x=\xopt =\prox(s)$, $y=\ybr_{x_T}$ and arbitrary constant $c$,
\begin{equation*}
\phi(x_T,\ybr_{x_T})- \phi(\xopt,y_T)\le c\mu\Par{\Veu_{\xinit}\Par{\xopt}+\Veu_{\yinit}\Par{\ybr_{x_T}}}.
\end{equation*}
The optimality of $\xopt$ gives $\phi(\xopt,y_T) - \phi(\xopt,\ybr_{\xopt})\le 0$. Combining with the above implies
\begin{equation}\label{eq:minimax-approxprox-1}
F^\mu_s\Par{x_T}- F^\mu_s\Par{\xopt}\le c\mu\Par{\Veu_{\xinit}\Par{\xopt}+\Veu_{\yinit}\Par{\ybr_{x_T}}}.
\end{equation}
We now bound the two sides of~\eqref{eq:minimax-approxprox-1} separately. The strong concavity of $\phi$ in $y$ allows us to show that
\arxiv{\[
F^\mu_s\Par{x_T}- F^\mu_s\Par{\xopt}\ge \mu\Veu_{\ybr_{x_T}}(\yopt).
\]}
\notarxiv{
$F^\mu_s\Par{x_T}- F^\mu_s\Par{\xopt}\ge \mu\Veu_{\ybr_{x_T}}(\yopt).$
}
For the right-hand side, the definition of $\yinit$ as a best response to $\xprev$ yields
\arxiv{
\begin{flalign*}
	& \mu\Veu_{\yinit}\Par{\yopt} \le  V^F_{\xopt}\Par{\xprev}.
\end{flalign*}
}
\notarxiv{
$\mu\Veu_{\yinit}\Par{\yopt} \le  V^F_{\xopt}\Par{\xprev}.$
}
Plugging both inequalities into~\eqref{eq:minimax-approxprox-1}, and choosing sufficiently small $c$, we see the output satisfies the condition of a $\ApproxProx_{F,\mu}$ oracle, concluding the proof sketch.
\arxiv{\end{psketch}}

\notarxiv{\textbf{Warm-start implementation.}}
\arxiv{\paragraph{Warm-start implementation.}}
We now explain how to apply accelerated gradient descent (AGD) and a recursive use of $\MirrorProx$ for obtaining a valid warm-start for $\ouralg$ (\Cref{def:warmstart}). 

\notarxiv{
\begin{algorithm2e}[t]
	\caption{$\MirrorProx$}
	\label{alg:mirror-prox}
	\DontPrintSemicolon
	\codeInput  Gradient oracle for $\phi:\xset\times\yset\to \R$,  smoothness $L$, points $\xinit,\yinit$, iteration number $T$
	\Comment*{To implement $\ApproxProx_{F,\mu}$, we let $\yinit \approx \argmax_{y\in\yset}f(x,y)$}
	\codeParameter Step-size $\eta$\;
	Initialize $x_0\gets \xinit$, $y_0\gets \yinit$~\label{line:minimax-br-agd}\;
	\For{$t=0$ {\bfseries{\textup{to}}} $T-1$}{
		$u_{t}\gets \arg\min_{x\in\xset}\langle\eta \nabla_x\phi(x_t,y_t),x\rangle +\Veu_{x_{t}}(x)$\;
		$v_{t}\gets \arg\min_{y\in\yset}\langle-\eta\nabla_y\phi(x_t,y_t),y\rangle +\Veu_{y_{t}}(y)$\;
		$x_{t+1}\gets \arg\min_{x\in\xset}\langle\eta \nabla_x\phi(u_{t},v_{t}),x\rangle +\Veu_{x_{t}}(x)$\;
		$y_{t+1}\gets \arg\min_{y\in\yset}\langle-\eta\nabla_y\phi(u_{t},v_{t}),y\rangle +\Veu_{y_{t}}(y)$\;
	}
	\codeReturn  $x_T, y_T$
\end{algorithm2e}
}
\arxiv{
\begin{algorithm2e}[t]
	\caption{$\WarmStart$-$\MirrorProx$}
	\label{alg:warmstart-minimax}
	\DontPrintSemicolon
	\codeInput  Gradient oracle for $\phi:\xset\times\yset\to \R$, strong concavity $\mu$, smoothness $L$, point $(\xinit, \yinit)$\;
	\codeParameter Iteration number $T$, epoch number $K$\;
	Find $\yinit'$ so that $f(\xinit,\yinit')-f(\xinit,\ybr_{\xinit})\le \frac{1}{2}L R^2$ \label{line:warmstart-minimax-bestresponse}\Comment*{Implemented via AGD}
	Let $\phi\defeq f(x,y)+\mu \Veu_{\xinit}(x)$ and $L'=L+\mu$\;
	Initialize $x^{(0)}\gets \xinit$, $y^{(0)}\gets \yinit$\;
	\For{$k=0$ {\bfseries{\textup{to}}} $K-1$}{
		$x^{(k+1)},y^{(k+1)}\gets\MirrorProx(\phi,L',x^{(k)}, y^{(k)}, T)$\;
	}
	\codeReturn  $x^{(K)}$
\end{algorithm2e}
}

\begin{restatable}[$\WarmStart$ for max-structured minimization]{lemma}{lemwarmstartminimax}\label{lem:minimax-warm-start}
	Consider problem~\eqref{def:problem-minimax} where $R$, $R'$ are diameter bounds for $\xset$, $\yset$ respectively. Given initial point $\xinit, \yinit$, \Cref{alg:warmstart-minimax}, with parameters $T = O(L/\mu)$, $K=O(\log(L/\mu))$ and \Cref{line:warmstart-minimax-bestresponse} implemented using AGD, implements~$\WarmStart_{F,\mu}(R^2)$ with \begin{equation*}
		O\Par{L/\mu\log(L/\mu)+\sqrt{L/\mu}\log\Par{R'/R}}
	\end{equation*}
	gradient queries.
\end{restatable}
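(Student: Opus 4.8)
The plan is to show that running $\MirrorProx$ for $K$ geometrically-improving epochs drives $x^{(K)}$ close enough to the proximal point $\xopt := \prox(\xinit)$ that the function-value requirement of $\WarmStart_{F,\mu}(R^2)$ holds. Throughout, let $\phi(x,y) := f(x,y) + \mu\Veu_{\xinit}(x)$ be the regularized saddle objective, so that $\max_{y\in\yset}\phi(x,y) = F^\mu_{\xinit}(x)$, let $(\xopt,\yopt)$ be its saddle point (so $\xopt = \prox(\xinit)$ and $\yopt = \ybr_{\xopt}$), and let $w\in\argmin_{x\in\xset}F(x)$. The key structural fact is that, although $f$ is only convex in $x$, the regularizer makes $\phi$ $\mu$-strongly-convex in $x$ and $\mu$-strongly-concave in $y$, while its smoothness is $L' = L+\mu = O(L)$; the epochs are initialized at $(x^{(0)},y^{(0)}) = (\xinit,\yinit')$, where $\yinit'$ is the approximate best response from \Cref{line:warmstart-minimax-bestresponse}.

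First I would establish a per-epoch contraction of the potential $D_k := \Veu_{x^{(k)}}(\xopt) + \Veu_{y^{(k)}}(\yopt)$. Applying the $\MirrorProx$ guarantee \eqref{eq:mirror-prox-guarantee-informal} for epoch $k$ (with initialization $(x^{(k)},y^{(k)})$) at the point $(x,y)=(\xopt,\yopt)$ gives
\[
\phi(x^{(k+1)},\yopt) - \phi(\xopt,y^{(k+1)}) \le \frac{L'}{T}\,D_k.
\]
Splitting the left-hand side at $\phi(\xopt,\yopt)$ and using that $\xopt$ minimizes the $\mu$-strongly-convex $\phi(\cdot,\yopt)$ while $\yopt$ maximizes the $\mu$-strongly-concave $\phi(\xopt,\cdot)$ yields $\phi(x^{(k+1)},\yopt) - \phi(\xopt,y^{(k+1)}) \ge \mu D_{k+1}$. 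Hence choosing $T = \lceil 2L'/\mu\rceil = O(L/\mu)$ gives $D_{k+1}\le \tfrac12 D_k$, so $D_K \le 2^{-K}D_0$.

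Next I would bound the initial potential $D_0 = \Veu_{\xinit}(\xopt) + \Veu_{\yinit'}(\yopt)$ by $\mathrm{poly}(L/\mu)\cdot R^2$, crucially with no $R'$ dependence. For the $x$-term, $\mu$-strong-convexity of $F^\mu_{\xinit}$ gives $\mu\Veu_{\xinit}(\xopt) \le F^\mu_{\xinit}(\xinit) - F^\mu_{\xinit}(\xopt) \le F(\xinit) - F(w) \le \tfrac{\hat L}{2}R^2$, where $\hat L = O(L^2/\mu)$ is the smoothness of $F$ (a consequence of $(L/\mu)$-Lipschitzness of the best-response map $\ybr_{(\cdot)}$). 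For the $y$-term I would use $\norm{\yinit' - \yopt} \le \norm{\yinit' - \ybr_{\xinit}} + \norm{\ybr_{\xinit} - \ybr_{\xopt}}$: the first summand is controlled by the best-response accuracy $\tfrac12 LR^2$ of \Cref{line:warmstart-minimax-bestresponse} together with $\mu$-strong-concavity, and the second by $(L/\mu)$-Lipschitzness of $\ybr_{(\cdot)}$ applied to the already-bounded $\norm{\xinit-\xopt}$. This is exactly why \Cref{line:warmstart-minimax-bestresponse} computes an approximate best response rather than starting from an arbitrary $\yinit$.

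Finally I would convert the distance bound into the function-value guarantee. Since $\phi$ is $\mu$-strongly-concave in $y$, the primal function $F^\mu_{\xinit} = \max_y\phi(\cdot,y)$ is $\tilde L$-smooth with $\tilde L = O(L^2/\mu)$, so $F^\mu_{\xinit}(x^{(K)}) - F^\mu_{\xinit}(\xopt) \le \tilde L\,\Veu_{x^{(K)}}(\xopt)\le \tilde L\,D_K$. Combining with the proximal decomposition
\[
F(x^{(K)}) - F(w) \le \big(F^\mu_{\xinit}(x^{(K)}) - F^\mu_{\xinit}(\xopt)\big) + \big(F^\mu_{\xinit}(\xopt) - F(w)\big),
\]
whose second bracket is at most $\mu\Veu_{\xinit}(w) \le \tfrac{\mu}{2}R^2$ (as $\xopt$ minimizes $F^\mu_{\xinit}$ and $\norm{\xinit-w}\le R$), and using $D_K\le 2^{-K}D_0$ with $D_0 = \mathrm{poly}(L/\mu)R^2$, I would take $K = O(\log(L/\mu))$ to force the first bracket below $\tfrac{\mu}{2}R^2$ as well, yielding $F(x^{(K)}) - F(w)\le \mu R^2$. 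For the query count, the $K$ epochs cost $KT = O((L/\mu)\log(L/\mu))$ gradient evaluations, and the AGD best response in \Cref{line:warmstart-minimax-bestresponse} — reducing suboptimality from $O(LR'^2)$ to $\tfrac12 LR^2$ on an $(L/\mu)$-conditioned strongly-concave problem — costs $O(\sqrt{L/\mu}\log(R'/R))$, giving the claimed total. I expect the main obstacle to be the accounting in the previous paragraph: ensuring $D_0$ depends only polynomially on $L/\mu$ and on $R^2$ (not $R'^2$), since this is precisely what keeps $K$ independent of $R'/R$ and confines all $R'$ dependence to the lower-order best-response term.
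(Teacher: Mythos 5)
Your proposal follows the paper's own proof essentially step for step: AGD computes an approximate best response so that all $R'$ dependence is confined to an $O(\sqrt{L/\mu}\log(R'/R))$ lower-order term, $K=O(\log(L/\mu))$ epochs of $\MirrorProx$ (each with $T=O(L/\mu)$ steps) contract the combined squared-distance potential to the saddle point of $\phi$ by a constant factor per epoch via the $\mu$-strong convexity/concavity induced by the regularizer, the $(L/\mu)$-Lipschitzness of the best-response map bounds $\Veu_{\yinit'}(\yopt)$ by $\mathrm{poly}(L/\mu)\cdot R^2$, and smoothness of the primal function plus the bound $F^{\mu}_{\xinit}(\xopt)\le F(w)+\tfrac{\mu}{2}R^2$ converts the final distance into the required function-value guarantee. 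The only blemish is your bound on the $x$-part of $D_0$: the inequality $F(\xinit)-F(w)\le \tfrac{\hat{L}}{2}R^2$ does not follow from smoothness alone when the minimizer $w$ is constrained (the first-order term $\langle \nabla F(w),\xinit-w\rangle$ is nonnegative and cannot be dropped), but this is immaterial since $\Veu_{\xinit}(\xopt)\le \tfrac{1}{2}R^2$ holds trivially from the diameter bound on $\xset$, which is exactly what the paper uses.
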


By implementing $\ApproxProx$ and $\WarmStart$ using $\MirrorProx$ and~$\WarmStart$-$\textsc{Minimax}$, $\ouralg$ provides the following state-of-the-art complexity bounds for minimizing the max-structured problems.

\begin{restatable}[$\ouralg$ for minimizing the max-structured problem]{theorem}{thmminimax}\label{thm:minimax}
Given $F=\max_{y\in\yset}f(x,y)$ with diameter bounds $R$, $R'$ on $\xset$, $\yset$, respectively, $\ouralg$ (\Cref{alg:catalyst}) with parameters $\lambda = \mu$ and $T = O(R\sqrt{\mu/\eps})$ and  $\MirrorProx$ (\Cref{alg:mirror-prox}) to implement $\ApproxProx$, and $\WarmStart$-$\MirrorProx$ (\Cref{alg:warmstart-minimax}) to implement $\WarmStart$, outputs a solution $x$ such that 
$\E~F(x)-\min_{x'\in\xset}F(x')\le \epsilon$. The algorithm uses \arxiv{\begin{equation*}
	O\prn[\big]{LR/\sqrt{\mu\epsilon}+L/\mu\log(L/\mu)+\sqrt{L/\mu}\log\Par{R'/R}}
\end{equation*} }
\notarxiv{
$O\prn[\big]{LR/\sqrt{\mu\epsilon}+L/\mu\log(L/\mu)+\sqrt{L/\mu}\log\Par{R'/R}}$
}
gradient queries in expectation and $O\prn[\big]{R\sqrt{\mu/\eps}}$ calls to a best-response oracle $\brorcl_{f}(\cdot)$. 
Further, if $F$ is $\gamma$-strongly-convex, restarted $\ouralg$ (\Cref{alg:catalyst-sc}) with parameters $\lambda = \mu$, $T = O\Par{\sqrt{\mu/\gamma}}$, $K = O\Par{\log \Par{LR^2/\epsilon}}$ finds an $\eps$-approximate solution using 
	$
	O(LR/\sqrt{\mu\gamma}\log\Par{LR^2/\epsilon}+L/\mu\log(L/\mu)+\sqrt{L/\mu}\log\Par{R'/R})
	$
 gradient queries in expectation and $O\prn[\big]{\sqrt{\frac{\mu}{\gamma}}\log\frac{LR^2}{\epsilon}}$ calls to $\brorcl_{f}(\cdot)$.
\end{restatable}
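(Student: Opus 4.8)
The plan is to prove Theorem~\ref{thm:minimax} by assembling three already-established pieces: the generic $\ouralg$ complexity bound (\Cref{thm:improved-catalyst}), the $\ApproxProx$ implementation for max-structured problems (\Cref{lem:approxprox-minimax}), and the $\WarmStart$ implementation (\Cref{lem:minimax-warm-start}). For the convex (non-strongly-convex) case, I would first instantiate \Cref{thm:improved-catalyst} with $\lambda = \mu$ and $R$ the diameter of $\xset$. This immediately yields an $\epsilon$-suboptimal point after $T = O(\sqrt{\mu R^2/\epsilon}) = O(R\sqrt{\mu/\epsilon})$ iterations, each requiring one call to $\ApproxProx_{F,\mu}$ and one call to $\UnbiasedProx_{F,\mu}$, plus a single $\WarmStart$ call. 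Then, invoking \Cref{lem:MLMC} with $p=1/2$, $j_0=2$, each $\UnbiasedProx$ reduces in expectation to a constant ($2+j_0 = 4$) number of $\ApproxProx$ calls, so the total number of $\ApproxProx$ calls is still $O(R\sqrt{\mu/\epsilon})$ in expectation.

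Next I would substitute the cost of each primitive. By \Cref{lem:approxprox-minimax}, each $\ApproxProx_{F,\mu}$ costs $O(L/\mu)$ gradient queries plus one best-response call $\brorcl_F$. Multiplying by the $O(R\sqrt{\mu/\epsilon})$ iterations gives a leading-order gradient cost of $O\bigl(\frac{L}{\mu}\cdot R\sqrt{\mu/\epsilon}\bigr) = O\bigl(LR/\sqrt{\mu\epsilon}\bigr)$, and $O(R\sqrt{\mu/\epsilon})$ best-response calls. By \Cref{lem:minimax-warm-start}, the single $\WarmStart$ call contributes the additive term $O\bigl(\frac{L}{\mu}\log\frac{L}{\mu} + \sqrt{L/\mu}\log(R'/R)\bigr)$. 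Summing these gives the stated bound $O\bigl(LR/\sqrt{\mu\epsilon} + \frac{L}{\mu}\log\frac{L}{\mu} + \sqrt{L/\mu}\log(R'/R)\bigr)$. I should verify that the best-response subroutine cost (as discussed after \Cref{lem:approxprox-minimax}, roughly $O(\sqrt{L/\mu}\log(1/\delta))$ per call via AGD) stays a lower-order term; taking $\delta$ polynomially small in the target accuracy keeps the per-call cost logarithmic and thus absorbed into the $\Otil{\cdot}$ terms, though one must confirm the accumulated best-response error does not degrade the $\ApproxProx$ guarantee — this is the one place where a nontrivial error-propagation argument is needed.

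For the strongly-convex case, I would appeal to \Cref{prop:catalyst-sc}, the restarting meta-algorithm (\Cref{alg:catalyst-sc}). With $\lambda = \mu \ge \gamma$, each restart epoch runs $\ouralg$ for $T = O(\sqrt{\lambda/\gamma}) = O(\sqrt{\mu/\gamma})$ iterations, and $K = O(\log(LR^2/\epsilon))$ epochs suffice to reach accuracy $\epsilon$ via geometric error contraction. The total number of $\ApproxProx$ calls becomes $O(\sqrt{\mu/\gamma}\log(LR^2/\epsilon))$ in expectation (again using \Cref{lem:MLMC} to convert $\UnbiasedProx$ calls). Multiplying by the $O(L/\mu)$ per-call gradient cost yields the leading term $O\bigl(\frac{L}{\mu}\cdot\sqrt{\mu/\gamma}\log\frac{LR^2}{\epsilon}\bigr) = O\bigl(\frac{L}{\sqrt{\mu\gamma}}\log\frac{LR^2}{\epsilon}\bigr) = O\bigl(LR/\sqrt{\mu\gamma}\cdot\text{(log)}\bigr)$, and $O(\sqrt{\mu/\gamma}\log(LR^2/\epsilon))$ best-response calls. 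The warm-start is invoked once at the outset, contributing the same additive $\Otil{L/\mu}$ term as before.

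The main obstacle I anticipate is not the complexity bookkeeping, which is essentially multiplicative substitution, but rather ensuring that the \emph{inexactness} of the best-response oracle $\brorcl_F$ is handled rigorously. \Cref{lem:approxprox-minimax} is stated with an exact oracle, whereas in practice (and in \Cref{lem:minimax-warm-start}) the best response is computed approximately via AGD. I would need to argue that a sufficiently accurate (polynomially small $\delta$) best response preserves the $\ApproxProx_{F,\mu}$ guarantee of \eqref{approx-prox-cond}, since the initialization $y_{\mathrm{init}} = \brorcl_F(\xprev)$ enters the error criterion through the term $\mu\Veu_{\yinit}(\yopt) \le V^F_{\xopt}(\xprev)$ in the proof sketch. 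Tracking how this inequality degrades under approximate best-response, and confirming the resulting per-call overhead $O(\sqrt{L/\mu}\log(1/\delta))$ is subsumed by the $\Otil{L/\mu}$ warm-start and lower-order terms, is the delicate quantitative step; everything else follows by direct composition of the prior lemmas.
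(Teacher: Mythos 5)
Your proposal is correct and follows essentially the same route as the paper, which proves \Cref{thm:minimax} exactly by this composition: \Cref{thm:improved-catalyst} (resp.\ \Cref{prop:catalyst-sc} for the restarted, strongly-convex case) instantiated with $\lambda=\mu$, \Cref{lem:approxprox-minimax} for the $O(L/\mu)$-gradient-plus-one-$\brorcl_f$-call cost of each $\ApproxProx$, \Cref{lem:minimax-warm-start} for the additive warm-start term, and \Cref{lem:MLMC} to reduce each $\UnbiasedProx$ to an expected constant number of $\ApproxProx$ calls. The error-propagation obstacle you flag is actually moot for this particular statement, since the theorem assumes an \emph{exact} best-response oracle and counts its invocations as a separate resource; the inexact-best-response version is precisely \Cref{thm:minimax-delta}, which the paper handles not by the perturbation argument you sketch but via the additive-error generalization of the framework ($\ApproxProx^\delta$ with $\delta_t=\Theta(t^{-4})$-type errors, \Cref{apdx:minimax-delta}).
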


We remark that for strongly-convex $F$, the restarted ~\Cref{alg:catalyst-delta-sc} not only yields a good approximate solution for $F$, but also can be transferred to a good approximate primal-dual solution for $f(x,y)$ by taking the best-response to the high-accuracy solution $x$.

\notarxiv{
\begin{algorithm2e}[t]
	\caption{$\WarmStart$-$\MirrorProx$}
	\label{alg:warmstart-minimax}
	\DontPrintSemicolon
	\codeInput  Gradient oracle for $\phi:\xset\times\yset\to \R$, strong concavity $\mu$, smoothness $L$, point $(\xinit, \yinit)$\;
	\codeParameter Iteration number $T$, epoch number $K$\;
	Find $\yinit'$ so that $f(\xinit,\yinit')-f(\xinit,\ybr_{\xinit})\le \frac{1}{2}L R^2$ \label{line:warmstart-minimax-bestresponse}\;
	\Comment*{Implemented via AGD}
	Let $\phi\defeq f(x,y)+\mu \Veu_{\xinit}(x)$ and $L'=L+\mu$\;
	Initialize $x^{(0)}\gets \xinit$, $y^{(0)}\gets \yinit$\;
	\For{$k=0$ {\bfseries{\textup{to}}} $K-1$}{
		$x^{(k+1)},y^{(k+1)}\gets\MirrorProx(\phi,L',x^{(k)}, y^{(k)}, T)$\;
	}
	\codeReturn  $x^{(K)}$
\end{algorithm2e}
}

\notarxiv{\textbf{Generalization to the framework.}}
\arxiv{\paragraph{Generalization to the framework.}} To obtain complexity bounds strictly in terms of gradient queries, we extend the framework of~\Cref{sec:framework} to handle small additive errors $\delta\approx\Omega(1/t^4)$ at iteration $t$ when implementing the $\ApproxProx$ procedure as defined in~\eqref{approx-prox-cond}. For $x^\star = \arg\min_{x\in\xset} F^\lambda_s(x)$ we allow $\ApproxProx$ to return $x$ satisfying
 \begin{equation}\label{approx-prox-cond-delta}
\begin{aligned}
\E \Flam_s(x) - \Flam_s(x^\star)\le \frac{1}{8} & \Par{\lambda\Veu_{x^\star}\Par{\xinit}+V^F_{x^\star}(\xprev)}+\delta.
\end{aligned}
\end{equation}
This way, in~\Cref{lem:approxprox-minimax} one can implement the best-response oracle $\brorcl_{f}(\cdot)$ (in~\Cref{line:minimax-br-agd}) to a sufficient high accuracy using $\widetilde{O}(\sqrt{L/\mu})$ gradient queries, using the standard accelerated gradient method~\citep{nesterov1983method}. This turns the method in~\Cref{thm:minimax} into a complete algorithm for solving \eqref{def:problem-minimax}, and only incurs an additional cost of  $\widetilde{O}\left(R\sqrt{L/\mu}\cdot\sqrt{\mu/\eps}\right) = \widetilde{O}\left(R\sqrt{L/\eps}\right)$ gradient queries.

We state the main result here and refer readers to~\Cref{apdx:minimax-delta} for the generalization of $\ouralg$ (\Cref{alg:catalyst-delta},~\Cref{alg:catalyst-delta-sc} and~\Cref{prop:improved-catalyst-delta}) and more detailed discussion.

\begin{restatable}[$\ouralg$ for minimizing the max-structured problem, without $\brorcl_{f}$]{theorem}{thmminimax}\label{thm:minimax-delta}
Under the same setting of~\Cref{thm:minimax},~\Cref{alg:catalyst-delta} with accelerated gradient descent to implement $\brorcl_{f}(\cdot)$, outputs a primal $\eps$-approximate solution $x$ and has expected  gradient query complexity of  
\arxiv{\[O\Par{\frac{LR}{\sqrt{\mu\eps}}+\underbrace{\frac{L}{\mu}\log\frac{L}{\mu}+\sqrt{\frac{L}{\mu}}\log\frac{R'}{R}+R\sqrt{\frac{L}{\epsilon}}\log\frac{L(R+R')^2}{\epsilon}}_{\text{lower-order terms}}}.\] }
\notarxiv{$O\biggl(\frac{LR}{\sqrt{\mu\eps}}+\frac{L}{\mu}\log\frac{LR'}{\mu R}+R\sqrt{\frac{L}{\epsilon}}\log\frac{L(R+R')^2}{\epsilon}\biggr).$}
Further, if $F$ is $\gamma$-strongly-convex, restarted $\ouralg$ (\Cref{alg:catalyst-delta-sc}) finds an $\eps$-approximate solution and has expected gradient query complexity of 
 \arxiv{\[O\Par{\frac{LR}{\sqrt{\mu\gamma}}\log\Par{\frac{LR^2}{\epsilon}}+\underbrace{\frac{L}{\mu}\log\Par{\frac{L}{\mu}}+\sqrt{\frac{L}{\mu}}\log\Par{\frac{R'}{R}}+\sqrt{\frac{L}{\gamma}}\log\Par{\frac{\mu L(R+R')^2}{\gamma\epsilon}}\log\Par{\frac{LR^2}{\epsilon}}}_{\text{lower-order terms}}}.\]}
 \notarxiv{
 $O\biggl(\frac{LR}{\sqrt{\mu\gamma}}\log\Par{\frac{LR^2}{\epsilon}}+\frac{L}{\mu}\log\Par{\frac{LR'}{\mu R}}+\sqrt{\frac{L}{\gamma}}\log\Par{\frac{\mu L(R+R')^2}{\gamma\epsilon}}\log\Par{\frac{LR^2}{\epsilon}}\biggr)$.
 }
\end{restatable}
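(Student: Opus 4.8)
The plan is to combine the additive-error generalization of the framework (\Cref{prop:improved-catalyst-delta}) with an inexact implementation of the best-response oracle $\brorcl_f$, thereby turning the algorithm of \Cref{thm:minimax} into one using only gradient queries. The essential new ingredient is a version of \Cref{lem:approxprox-minimax} in which the initialization $\yinit=\brorcl_f(\xprev)$ is replaced by an approximate best response computed by AGD; I will show this introduces only a small additive error of the form allowed by \eqref{approx-prox-cond-delta}. With that in hand, the proof reduces to choosing the per-iteration best-response accuracy and summing the gradient costs.

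\textbf{Step 1 (inexact best response).} I first revisit the proof of \Cref{lem:approxprox-minimax} and locate the single place where exactness of $\yinit$ is used: the bound $\mu\Veu_{\yinit}(\yopt)\le V^F_{\xopt}(\xprev)$ on the right-hand side of \eqref{eq:minimax-approxprox-1}. Suppose instead $\yinit$ is a $\delta'$-accurate best response to $\xprev$, i.e. $f(\xprev,\brorcl_f(\xprev))-f(\xprev,\yinit)\le\delta'$. Since $f(\xprev,\cdot)$ is $\mu$-strongly concave, this implies $\tfrac{\mu}{2}\norm{\yinit-\brorcl_f(\xprev)}^2\le\delta'$, so by the triangle and Young's inequalities the bound degrades by an additive $O(\delta')$. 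Carrying this through \eqref{eq:minimax-approxprox-1}, and absorbing the resulting cross term using the lower bound $F^\mu_s(x_T)-F^\mu_s(\xopt)\ge\mu\Veu_{\ybr_{x_T}}(\yopt)$ exactly as in the exact case, shows that $\MirrorProx$ run for $T=O(L/\mu)$ steps implements $\ApproxProx_{F,\mu}$ in the relaxed sense of \eqref{approx-prox-cond-delta} with additive error $\delta=O(\delta')$.

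\textbf{Step 2 (accuracy schedule and bookkeeping).} Invoking \Cref{prop:improved-catalyst-delta} with $\lambda=\mu$ and $T=O(R\sqrt{\mu/\eps})$, it suffices that the additive errors satisfy $\sum_{t<T}\alpha_t^{-2}\delta_t=O(\mu R^2)$. Taking $\delta_t\asymp \mu R^2/t^4$ (so the best-response target in Step 1 is $\delta'_t=\Theta(\delta_t)$) makes this sum converge since $\alpha_t^{-2}=O(t^2)$, preserving the leading-order rate. Each $\ApproxProx$ call costs $O(L/\mu)$ mirror-prox gradient queries, and over $T$ iterations these contribute the leading term $O(L/\mu)\cdot O(R\sqrt{\mu/\eps})=O(LR/\sqrt{\mu\eps})$. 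The best response at iteration $t$ is computed by AGD on the $\mu$-strongly-concave, $L$-smooth map $f(\xprev,\cdot)$ to accuracy $\delta'_t$, costing $O(\sqrt{L/\mu}\log(L(R+R')^2/\delta'_t))=O(\sqrt{L/\mu}\log(L(R+R')^2 t^4/(\mu R^2)))$ queries; summing over $t\le T$ and using $T=O(R\sqrt{\mu/\eps})$ yields $O(R\sqrt{L/\eps}\,\log\tfrac{L(R+R')^2}{\eps})$. Adding the warm-start cost from \Cref{lem:minimax-warm-start}, namely $O(\tfrac{L}{\mu}\log\tfrac{L}{\mu}+\sqrt{\tfrac{L}{\mu}}\log\tfrac{R'}{R})$, gives the stated total for the convex case.

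\textbf{Step 3 (strongly-convex case and main obstacle).} For $\gamma$-strongly-convex $F$ I invoke the restarted variant \Cref{alg:catalyst-delta-sc}: $K=O(\log\tfrac{LR^2}{\eps})$ restarts, each running the above procedure for $T=O(\sqrt{\mu/\gamma})$ iterations and reducing the suboptimality by a constant factor. Multiplying the per-restart cost by $K$ and re-tracking the best-response and warm-start terms (now with the extra logarithmic factors appearing in the statement) reproduces the stated strongly-convex complexity. I expect the main obstacle to be Step 1: carefully re-deriving \Cref{lem:approxprox-minimax} with an inexact $\yinit$ requires tracking how the $O(\delta')$ perturbation interacts with the triangle-inequality/absorption argument relating $\Veu_{\yinit}(\ybr_{x_T})$ to $\Veu_{\yinit}(\yopt)$ and $\Veu_{\ybr_{x_T}}(\yopt)$, and verifying that the final additive term stays of order $\delta'$ (rather than, say, $\sqrt{\delta'}$), so that the cheap schedule $\delta_t\asymp t^{-4}$—and hence the merely logarithmic best-response cost—suffices.
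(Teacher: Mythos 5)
Your proposal is correct and follows essentially the same route as the paper: it mirrors the paper's additive-error framework (\Cref{prop:improved-catalyst-delta} with the $\delta_t\asymp \alpha_t^2 t^{-2}\lambda R^2=\Omega(t^{-4})\lambda R^2$ schedule), its implementation of $\ApproxProx^\delta$ via AGD-computed inexact best responses plus $\MirrorProx$ (\Cref{coro:approxprox-minimax-delta}), and the same cost bookkeeping and restart argument. The obstacle you flag in Step 1 is resolved exactly as you anticipate: strong concavity gives $\tfrac{\mu}{2}\norm{\yinit-\ybr_{\xprev}}^2\le\delta'$, and the bound $\tfrac{\mu}{2}\Veu_{\yinit}(\yopt)\le\mu\Veu_{\ybr_{\xprev}}(\yopt)+\mu\Veu_{\yinit}(\ybr_{\xprev})\le V^F_{\xopt}(\xprev)+\delta'$ propagates the perturbation linearly (at the cost of a constant-factor loss absorbed into~\eqref{approx-prox-cond-delta}), so the final additive term is indeed $O(\delta')$ and not $O(\sqrt{\delta'})$.
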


 \arxiv{%
\section{Discussion}\label{sec:discussion}

This paper proposes an improvement of the APPA/Catalyst acceleration framework, providing an efficiently attainable Relaxed Error Criterion for the Accelerated Prox Point method ($\ouralg$) that eliminates logarithmic complexity terms from previous result while maintaining the elegant black-box structure of APPA/Catalyst.

The main conceptual drawback of our proposed framework (beyond its reliance on randomization) is that efficiently attaining our relaxed error criterion requires a certain degree of problem-specific analysis as well as careful subproblem solver initialization. In contrast, APPA/Catalyst rely on more standard and readily available linear convergence guarantees (which of course also suffice for $\ouralg$).

Nevertheless, we believe there are many more situations where efficiently meeting the relaxed criterion is possible. These include variance reduction for min-max problems, smooth min-max problems which are (strongly-)concave in $y$ but not convex in $x$, and problems amenable to coordinate methods. All of these are settings where APPA/Catalyst is effective~\citep{yang2020catalyst,frostig2015regularizing,lin2017catalyst} and our approach can likely be provably better. 

Moreover, even when proving improved rates is difficult, $\ApproxProx$ can still serve as an improved stopping criterion. This motivates further research into practical variants of $\ApproxProx$ that depend only on observable quantities (rather than, e.g.\ the distance to the true proximal point).

}

\section*{Acknowledgements}
The authors thank anonymous reviewers for helpful suggestions. YC was supported in part by the Israeli Science Foundation (ISF) grant no.\ 2486/21 and the Len Blavatnik and the Blavatnik Family foundation. YJ was supported in part by a Stanford Graduate Fellowship and the Dantzig-Lieberman Fellowship. 
AS was supported in part by a Microsoft Research Faculty Fellowship, NSF CAREER Award CCF-1844855, NSF Grant CCF-1955039, a PayPal research award, and a Sloan Research Fellowship.

\bibliographystyle{icml2022}

\newpage
\appendix
\onecolumn

\notarxiv{%
\section{Additional Related Work}\label{app:related}

Beyond the closely related work already described, our paper touches on several lines of literature. 

\paragraph{Finite-sum problems.}
The ubiquity of finite-sum optimization problems in machine learning has led to a very large body of work on developing efficient algorithms for solving them.  We refer the reader to~\citet{gower2020variance} for a broad survey and  focus on \emph{accelerated} finite-sum methods, i.e., with a leading order complexity term scaling as $\sqrt{n/\epsilon}$ (or as $\sqrt{n\kappa}$ for strongly-convex problems with condition number $\kappa$). 
Accelerated Proximal Stochastic Dual Coordinate Ascent~\citep{shalev2014accelerated} gave the first such accelerated rate for an important subclass of finite-sum problems. This method was subsequently interpreted as a special case of APPA/Catalyst~\cite{lin2015universal,frostig2015regularizing}, which  can also accelerate several other finite-sum optimization problems. Since then, research has focused on designing more practical and theoretically efficient accelerated algoirthms by opening the APPA/Catalyst black box. The algorithms Katyusha~\cite{allen2016katyusha}, Varag~\cite{lan2019unified} and VRADA~\citep{song2020variance} offer improved complexity bound at the price of the generality and simplicity of APPA/Catalyst. Our approach matches the best existing guarantee (due to VRADA) without paying this price.

\paragraph{Max-structured problems.}

Objectives of the form $F(x) = \max_{y\in\yset} f(x,y)$ are very common in machine learning and beyond. Such objectives arise from constraints (via Lagrange multipliers)~\citep{bertsekas1999nonlinear}, robustness requirements~\citep{bental2009robust,ganin2016domain,madry2018towards}, and game-theoretic considerations~\citep{morgenstern1953theory,silver2017mastering}. When $f$ is convex in $x$ and concave in $y$, the mirror-prox algorithm minimizes $F$ to accuracy epsilon in $O(LR R'/\epsilon)$ gradient evaluations (with respect to both $x$ and $y$), where $R'$ is the diameter of $\yset$. This rate can be improved when $f$ is $\mu$-strongly-concave in $y$. For the special bilinear case $f(x,y) = \phi(x) + \inner{y}{Ax} - \psi(y)$, where $\psi$ a ``simple'' $\mu$-strongly-convex function, an improved complexity bound of $O(LR/\sqrt{\mu \epsilon})$ has long been known~\citep{nesterov2005smooth}.

More recent work studies the case of general convex-strongly-concave $f$. \citet{thekumparampil2019efficient} and \citet{zhao2020primal} establish complexity bounds of $O(\frac{L^{3/2}}{\mu\sqrt{\epsilon}} \log^2 \frac{L^2 R R'}{\mu\epsilon})$, which~\citet{lin2020near} improve to $O(\frac{L}{\sqrt{\mu\epsilon}} \log^3 \frac{L^2 R R'}{\mu\epsilon})$ using an algorithm loosely based on APPA/Catalyst. \citet{yang2020catalyst} present a more direct application of APPA/Catalyst to min-max problems, further improving the complexity to $O(\frac{L}{\sqrt{\mu\epsilon}} \log \frac{L^2 R^2}{\mu\epsilon})$, with logarithmic dependence on $R'$ only in a lower order term. Similarly to standard APPA/Catalyst, the min-max variant requires highly accurate proximal point computation, e.g., to function-value error of $O(\frac{\mu^3 \epsilon^2}{L^4 R^2})$. In contrast, $\ouralg$ requires constant (relative) suboptimality and removes the final logarithmic factor from the leading-order complexity term.  \citet{yang2020catalyst} also provide extensions to finite-sum min-max problems and problems where $f$ is non-convex in $x$, which would likely benefit from out method as well (see \Cref{sec:discussion}).

\emph{Independent work.} In recent independent work, 
 \citet{kovalev2022first}  develop a method that minimizes  $\max_{y}f(x,y)$ assuming $\mu$-strong-concavity in $y$ and $\gamma$-strong-convexity in $x$. They attain an essentially optimal complexity proportional to $\frac{1}{\sqrt{\mu\gamma}}$ times a logarithmic factor depending on problem parameters. Their method is tailored to saddle point problems, working in  an expanded space by using point-wise conjugate function and applying recent advances in monotone operator theory. We note that RECAPP with restarts attains the same complexity bound (see \Cref{thm:minimax-delta}). However, it is unclear whether the algorithm of \citep{kovalev2022first} can recover the RECAPP's complexity bound in the setting where $f$ is not strongly convex in $x$.

\paragraph{Monteiro-Svaiter-type acceleration.}
Monteiro-Svaiter~\citep{monteiro2013accelerated} propose a variant of the accelerated proximal point method that uses an additional gradient evaluation to facilitate approximate proximal point computation. The Monteiro-Svaiter method and its extensions~\citep{gasnikov19near,bubeck2019complexity,bullins2020highly,carmon2020acceleration,song2021unified,kovalev2022first,carmon2022optimal} also allow for the regularization parameter $\lambda$ to be determined  dynamically by the procedure approximating the proximal point. \citet{ivanova2021adaptive} leverage this technique to develop a variant of Catalyst that offers improved adaptivity and, in certain cases, improved complexity. We provide additional comparison between the approximation condition of~\cite{monteiro2013accelerated,ivanova2021adaptive} and RECAPP in \Cref{sec:framework-comparison}.

\paragraph{Multilevel Monte Carlo (MLMC).}
MLMC is a method for debiasing function estimators by randomizing over the level of accuracy~\citep{giles2015multilevel}.
While originally conceived for PDEs and system simulation, a particular variant of MLMC due to~\citet{blanchet2015unbiased} has found recent applications in the theory of stochastic optimization~\citep{levy2020large,hu2021bias}.
Our method directly builds on the recent proposal of~\citet{asi2021stochastic} to use MLMC in order to obtain unbiased estimates of proximal points (or, equivalently, the Moreau envelope gradient). 
\citet{asi2021stochastic} apply this estimator to de-bias proximal points estimated via SGD and improve several structured acceleration schemes. 
In contrast, we apply MLMC on linearly convergent algorithms, allowing us to configure it much more aggressively and avoid the extraneous logarithmic factors that appeared in the rates of \citet{asi2021stochastic}.
}

\section{Proofs for~\Cref{sec:framework}}\label{apdx:framework}

We first give a formal proof of~\Cref{lem:MLMC}.

\lemMLMC*

\begin{proof}[Proof of~\Cref{lem:MLMC}]
	Let $\xopt \defeq \prox(s)$ and $E_0 \defeq \lambda\Veu_{x^\star}\Par{\xinit}+V^F_{x^\star}(\xprev)$. By definition of $\ApproxProx$ and the strong $\lambda$ strong-convexity of $F_s^\lambda$ we have  for $j=0$, 
	\begin{align}
		\label{eq:MLMC_base}
		& \E\left[\frac{\lambda}{2}\norm{x^{(0)}-\xopt}^2\right] \le\E \Flam_s\Par{x^{(0)}} - \Flam_s\Par{x^\star} \le \frac{1}{8} E_0.
    \end{align}
    Further, for all $j\ge1$,
    \begin{align}
	\E \left[\frac{\lambda}{2}\norm{x^{(j)}-\xopt}^2\right] & \le \E \Flam_s\Par{x^{(j)}} - \Flam_s\Par{x^\star} \le \frac{1}{8}~\E \Par{\lambda\Veu_{x^\star}\Par{x^{(j-1)}}+V^F_{x^\star}(x^{(j-1)})} \nonumber \\
	& \stackrel{(i)}{=} \frac{1}{8}~ \E\Par{V^{F^\lambda_s}_{x^\star}(x^{(j-1)})}
	\stackrel{(ii)}{\le} \frac{1}{8}~\E\Par{F^\lambda_s\Par{x^{(j-1)}}-F^\lambda_s\Par{\xopt}}
	 \stackrel{(iii)}{\le}  \Par{\frac{1}{8}}^{j+1} E_0, 
	 \label{eq:MLMC_induct}
	\end{align}
	where we use  $(i)$ the equality that $\norm{a-b}^2+\norm{b-c}^2-2\inprod{c-b}{a-b} = \norm{a-c}^2$, $(ii)$, the optimality of $\xopt$ which implies $\inprod{\nabla F^\lambda_s(\xopt)}{x-\xopt}\ge 0$ for any $x\in\xset$, $(iii)$ induction over $j$ and \eqref{eq:MLMC_base}.
	Consequently, $\E x^{(j)}\rightarrow \xopt$ as $j\rightarrow\infty$. Further, since $\P[J = k] = p_J$ for all $k \geq j_0$, the algorithm returns a point $x$ satisfying
	\[
	\E x = \E_J\left[x^{(j_0 - 1)}+ p_J^{-1}(x^{(J)}-x^{(J-1)})\right] = \lim_{j\rightarrow\infty} x^{(j)} = \xopt,
	\]
	which shows the output is an unbiased estimator of $\xopt$. 
	
	Next, to bound the variance, we use that $p_J = 2^{-(J_+ + 1)}$ for $p = 1/2$. Applying \eqref{eq:MLMC_base} and \eqref{eq:MLMC_induct} yields that for all $j > j_0$
	\begin{align*}
	\E \norm{x^{(j)} - x^{ (j - 1)}}^2
	&= 
	\E \norm{(x^{(j)} - \xopt) - (x^{(j - 1)} - \xopt) }^2 
	\leq 2 \E \norm{x^{(j)} - \xopt}^2
	+2 \E \norm{x^{(j - 1)} - \xopt}^2 \\
	&\leq \left(\frac{2}{8} +2 \right) \Par{\frac{1}{8}}^{j} \Par{\frac{2 E_0}{\lambda}}
	= 4.5 \cdot \Par{\frac{1}{8}}^{j} \Par{\frac{E_0}{\lambda}}.
	\end{align*}
Consequently,
	\begin{align*}
	\E \norm{p_J^{-1} \Par{x^{(J)} - x^{(\max\{J - 1,j_0\})}}}^2
	&=
	\sum_{j = j_0 + 1}^\infty p_j^{-1} \E \norm{x^{(j)} - x^{(j - 1)}}^2 
	= 
	\sum_{j = 1}^{\infty} 2^{j + 1} \E \norm{x^{(j_0 + j)} - x^{(j_0 + j- 1)} }^2 \\
	&\leq 4.5 \sum_{j = 1}^{\infty} \frac{2^{j + 1}}{8^{j_0 + j}} \Par{\frac{E_0}{\lambda}}
	= \frac{3}{8^{j_0}} \cdot \Par{\frac{E_0}{\lambda}}
	\end{align*}
and therefore,
	\begin{flalign*}
	& \E\norm{x^{(j_0)} + p_J^{-1} \left(x^{(J)}-x^{(J-1)}\right)-\xopt}^2\\
	& \hspace{1em} \leq 2 \E \norm{x^{(j_0)} - \xopt}^2 + 2 \E \norm{p_J^{-1} \Par{x^{(J)} - x^{(\max\{J - 1,j_0\})}}}^2 \\
	& \hspace{1em} \leq 
	\left[
	4 \Par{\frac{1}{8}}^{j_0 + 1} +  2  \cdot \frac{3}{ 8^{j_0}} 
	\right]
	\Par{\frac{E_0}{\lambda}}
	 = \frac{6.5 \cdot E_0}{\lambda \cdot 8^{j_0}}\,.
\end{flalign*}
Since $j_0 \ge  2$ this implies that the algorithm implements $\UnbiasedProx$ as claimed.

Finally, note that the expected number of calls made to $\ApproxProx$ is $\E J = j_0 + \E J_+$. Further, $\E J_+ = \frac{1}{1 - p}$ since $J_+$ is geometrically distributed with success probability $1 - p$. Consequently, the expected number of calls made to $\ApproxProx$ is $j_0 + \frac{1}{1 - p}$ as desired.
\end{proof}

\catalyst*

\newcommand{\filt}[1][t]{\mathcal{F}_{#1}}
\newcommand{\epsbar}{\bar{\varepsilon}}

\paragraph{Notation.} 
We first define the filtration 
$\filt = \sigma(x_1, v_1, \ldots, x_t, v_t)$ and use the notation $\xopt_{t} =\arg\min_{x\in\xset} F^\lambda_{s_{t-1}}(x)$ to denote the exact proximal mapping which iteration $x_{t}$ of the algorithm approximates. We note that $s_t, \xopt_{t+1}, \in \filt$, i.e., they are deterministic when conditioned on $x_t, v_t$.  We also recall in literature it is well-known that the coefficients $\alpha_t$ we pick satisfy the condition that $\alpha_t\in[\frac{4/\sqrt{3}}{t+3},\frac{2}{t+2}]$~\citep{paquette2017catalyst,yang2020catalyst}.

For each iteration of Algorithm~\ref{alg:catalyst}, we obtain the following bound on  potential decrease (a special case and more careful analysis of its variant in Lemma 5 of~\citet{asi2021stochastic}).

\begin{proposition}\label{lem:sapm-step-bound}
	Under the assumptions of \Cref{thm:improved-catalyst}, let $x'$ be a minimizer of $F$. For every $t$, the iterates of \Cref{alg:catalyst} satisfy
	\begin{flalign*}
		&\Ex*{
			\frac{1}{\alpha^2_{t+1}} (F(x_{t+1}) - F(x')) + \frac{\lambda}{2}\norm{v_{t+1}-x'}^2  | \filt
		} \le 
		\frac{1}{\alpha^2_t} (F(x_{t}) - F(x')) +  \frac{\lambda}{2}\norm{v_{t}-x'}^2.
	\end{flalign*}
\end{proposition}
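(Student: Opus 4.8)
The plan is to prove the conditional inequality following the two-step strategy of the proof sketch, working throughout conditionally on $\filt$. Conditioned on $\filt$, the points $x_t, v_t, s_t$ and the exact prox point $w \defeq \xopt_{t+1} = \prox(s_t)$, together with the ideal update $v^\star_{t+1} \defeq v_t - \alpha_{t+1}^{-1}\Par{s_t - w}$, are deterministic; the only randomness is in $x_{t+1} = \ApproxProx_{F,\lambda}(s_t; s_t, x_t)$ and $\tx_{t+1} = \UnbiasedProx_{F,\lambda}(s_t; x_t)$. First I would establish an idealized potential decrease with explicit slack, and then show that the two approximation errors are absorbed by that slack.

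\emph{Step 1 (idealized decrease).} Write $g \defeq \lambda\Par{s_t - w} = \grad F(w)$, where the second equality holds in the unconstrained case (in the constrained case I would replace it throughout by the prox variational inequality $\langle \grad F(w) + \lambda(w - s_t), x - w\rangle \ge 0$ for all $x \in \xset$). I would expand $\tfrac{\lambda}{2}\norm{v^\star_{t+1} - x'}^2$ and substitute $v_t = \alpha_{t+1}^{-1}\Par{s_t - (1-\alpha_{t+1})x_t}$. Using $\langle g, s_t - w\rangle = 2\lambda\Veu_w(s_t)$, the Bregman identity $\langle g, x_t - w\rangle = F(x_t) - F(w) - V^F_w(x_t)$ evaluated at $x_t$ (crucially \emph{retaining} the divergence term instead of dropping it by convexity), and $\langle g, w - x'\rangle = F(w) - F(x') + V^F_w(x') \ge F(w) - F(x')$, together with the recursion $\tfrac{1-\alpha_{t+1}}{\alpha_{t+1}^2} = \tfrac{1}{\alpha_t^2}$, I expect to obtain
\[
\tfrac{1}{\alpha_{t+1}^2}\Par{F(w) - F(x')} + \tfrac{\lambda}{2}\norm{v^\star_{t+1} - x'}^2 \le \tfrac{1}{\alpha_t^2}\Par{F(x_t) - F(x')} + \tfrac{\lambda}{2}\norm{v_t - x'}^2 - \tfrac{\lambda}{\alpha_{t+1}^2}\Veu_w(s_t) - \tfrac{1}{\alpha_t^2}V^F_w(x_t).
\]
The last slack term $\tfrac{1}{\alpha_t^2}V^F_w(x_t)$ is the new ingredient; standard inexact-prox analyses keep only the $\Veu_w(s_t)$ slack.

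\emph{Step 2 (matching the errors).} Since $w$ and $v^\star_{t+1}$ are $\filt$-measurable, it suffices to bound the gap between the true and idealized potentials. For the function value, I would combine $\ApproxProx$ (with $\xinit = s_t$, $\xprev = x_t$) and the identity $F^\lambda_{s_t} = F + \lambda\Veu_{s_t}(\cdot)$; after separating $\lambda\Veu_w(s_t)$ and using $\Veu_w(s_t) \le 2\Veu_{x_{t+1}}(s_t) + 2\Veu_w(x_{t+1})$, a term $\tfrac12\lambda\Veu_w(x_{t+1})$ appears, which I would control by re-invoking $\ApproxProx$ through $\lambda$-strong convexity of $F^\lambda_{s_t}$ and solving the resulting self-referential inequality, yielding $\E[F(x_{t+1}) \mid \filt] \le F(w) + \tfrac78\lambda\Veu_w(s_t) + \tfrac{5}{24}V^F_w(x_t)$. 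For the $v$-update, unbiasedness $\E[\tx_{t+1} \mid \filt] = w$ makes the unprojected mean equal to $v^\star_{t+1}$, so a bias-variance split, non-expansiveness of $\proj_\xset$ (using $x' \in \xset$), and the $\UnbiasedProx$ variance bound give $\tfrac{\lambda}{2}\E\Brack{\norm{v_{t+1}-x'}^2 - \norm{v^\star_{t+1}-x'}^2 \mid \filt} \le \alpha_{t+1}^{-2}\tfrac{\lambda\Veu_w(s_t) + V^F_w(x_t)}{8}$.

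Adding Steps 1 and 2, the $\Veu_w(s_t)$ contributions cancel exactly (the error coefficients $\tfrac78 + \tfrac18 = 1$ match the $-1$ slack), while the net coefficient of $V^F_w(x_t)$ is $-\tfrac{1}{\alpha_t^2} + \tfrac{1}{\alpha_{t+1}^2}\Par{\tfrac{5}{24} + \tfrac18} = -\tfrac{1}{\alpha_t^2} + \tfrac{1}{3\alpha_{t+1}^2}$, which is nonpositive exactly when $\tfrac{\alpha_{t+1}^2}{\alpha_t^2} \ge \tfrac13$. This ratio bound follows from the recursion (e.g.\ $\alpha_{t+1} \ge \alpha_t^2/2$ gives $\tfrac{1}{\alpha_{t+1}^2} = \tfrac{1}{\alpha_t^2} + \tfrac{1}{\alpha_{t+1}} \le \tfrac{3}{\alpha_t^2}$) and is precisely why the method cannot take $\alpha_0 = \infty$ and must warm-start. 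Combining everything yields the claimed conditional inequality. I expect the main obstacle to be Step 1: isolating and retaining the extra $V^F_w(x_t)$ decrease that is absent from textbook accelerated-prox analyses, and then tracking the constants carefully enough that the $\Veu$ errors cancel exactly and the $V^F$ errors are dominated via the $\alpha$-ratio bound.
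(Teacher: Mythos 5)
Your proposal is correct and follows essentially the same route as the paper's proof: its Step~1 is the paper's exact-step potential-decrease lemma (retaining both the $\frac{\lambda}{\alpha_{t+1}^2}\Veu_{\xopt_{t+1}}(s_t)$ and the novel $\frac{1}{\alpha_t^2}V^F_{\xopt_{t+1}}(x_t)$ slack), and its Step~2 reproduces the paper's approximation-error lemma with identical constants ($\tfrac78$, $\tfrac{5}{24}$, $\tfrac18$, and the ratio bound $\alpha_{t+1}^2/\alpha_t^2\ge\tfrac13$). The only (minor, and in fact slightly more careful) deviations are that you handle the projection in the $v$-update explicitly via non-expansiveness rather than ignoring it, and derive the $\alpha$-ratio bound directly from the recursion instead of from the known interval $\alpha_t\in[\tfrac{4/\sqrt{3}}{t+3},\tfrac{2}{t+2}]$.
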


This proposition can be proved directly by combining the following two lemmas.

\arxiv{\newcommand{\vopt}{v^\star}}

\begin{lemma}[Potential decrease guaranteed by exact proximal step]\label{lem:sapm-step-bound-1}
	At $t$-th iteration of~\Cref{alg:catalyst}, let $\xopt_{t+1} \defeq \prox(s_t)$ and $\vopt_{t+1} = v_t - (\alpha_{t+1})^{-1} (s_t - \xopt_{t+1})$ be the ``ideal'' values of $x_{t+1}$ and $v_{t+1}$ obtained via an exact prox-point computation, then we have 
	\begin{flalign}\label{eq:catalyst-eq-1}
& \frac{1}{\alpha_{t+1}^2}\left(F\left(\xopt_{t+1}\right)-F\left(x'\right)\right)+\frac{\lambda}{2}\norm{v_{t+1}^\star-x'}^2\nonumber\\
  & \hspace{1em} \le \frac{1}{\alpha_{t}^2}\prn*{F(x_{t})-F(x')}+\frac{\lambda}{2}\norm{v_{t}-x'}^2-\frac{\lambda}{\alpha^2_{t+1}}\Veu_{\xopt_{t+1}}\Par{s_t}-\frac{1}{\alpha_{t}^2}V^F_{\xopt_{t+1}}\Par{x_t}.
	\end{flalign}
\end{lemma}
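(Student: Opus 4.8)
The plan is to reduce the claim to an algebraic identity coming from the optimality of the exact proximal point, with convexity of $F$ supplying the only genuine inequality. Abbreviate $x^\star \defeq \xopt_{t+1} = \prox(s_t)$ and $\alpha \defeq \alpha_{t+1}$, and introduce the prox gradient $g \defeq \lambda(s_t - x^\star)$. By optimality of $x^\star$ for $F^\lambda_{s_t}$ over $\xset$, the variational inequality $\inprod{\grad F(x^\star) - g}{z - x^\star} \ge 0$ holds for every $z \in \xset$ (with equality $g = \grad F(x^\star)$ in the unconstrained case). Two facts will drive the computation: the ideal update is exactly $v^\star_{t+1} = v_t - (\alpha\lambda)^{-1} g$, and $\norm{g}^2 = 2\lambda^2 \Veu_{x^\star}(s_t)$.

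First I would expand the moving potential,
\[ \tfrac{\lambda}{2}\norm{v^\star_{t+1} - x'}^2 = \tfrac{\lambda}{2}\norm{v_t - x'}^2 - \tfrac{1}{\alpha}\inprod{g}{v_t - x'} + \tfrac{\lambda}{\alpha^2}\Veu_{x^\star}(s_t), \]
and substitute it into the target inequality, cancelling the common $\tfrac{\lambda}{2}\norm{v_t - x'}^2$. After moving the $\Veu_{x^\star}(s_t)$ terms to one side, the coefficient of $\Veu_{x^\star}(s_t)$ doubles to $\tfrac{2\lambda}{\alpha^2}$; this is precisely the extra slack that the new Bregman bonus $-\tfrac{1}{\alpha_t^2}V^F_{x^\star}(x_t)$ must cover.

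The crux is the right-hand side. I would apply the recursion $\alpha_t^{-2} = \alpha^{-2} - \alpha^{-1}$ together with $V^F_{x^\star}(x_t) = F(x_t) - F(x^\star) - \inprod{\grad F(x^\star)}{x_t - x^\star}$; the $F(x_t)$ contributions cancel, converting $\alpha_t^{-2}(F(x_t)-F(x')) - \alpha_t^{-2}V^F_{x^\star}(x_t)$ into $(\alpha^{-2} - \alpha^{-1})\big(F(x^\star) - F(x') + \inprod{\grad F(x^\star)}{x_t - x^\star}\big)$. Multiplying the reduced inequality by $\alpha > 0$ and applying convexity at $x'$ in the form $F(x^\star) - F(x') \le \inprod{g}{x^\star - x'}$ (which follows from convexity of $F$ together with the variational inequality), the function-value terms collapse. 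Finally I would substitute the affine identity $v_t = \alpha^{-1} s_t - (\alpha^{-1}-1)x_t$ implied by the definition of $s_t$; all inner products in $g$ involving $s_t$, $v_t$, and $x^\star$ cancel identically, leaving only the residual $(\alpha^{-1}-1)\inprod{g - \grad F(x^\star)}{x_t - x^\star}$, which is nonpositive since $\alpha \le 1$ and the variational inequality at $z = x_t$ gives $\inprod{\grad F(x^\star) - g}{x_t - x^\star} \ge 0$ (and it vanishes entirely in the unconstrained case).

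The main obstacle is bookkeeping rather than conceptual depth: one must carefully distinguish the two gradient objects $g = \lambda(s_t - x^\star)$ and $\grad F(x^\star)$ — which coincide only absent constraints — and track every coefficient so that the doubled $\Veu_{x^\star}(s_t)$ term and the novel $V^F_{x^\star}(x_t)$ term both land with the correct sign. I expect the step where the recursion turns $F(x_t) - F(x') - V^F_{x^\star}(x_t)$ into $F(x^\star) - F(x') + \inprod{\grad F(x^\star)}{x_t - x^\star}$ to be the decisive one, since it is exactly what lets the extra Bregman divergence strengthen the classical accelerated potential decrease.
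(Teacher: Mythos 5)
Your proof is correct and takes essentially the same route as the paper's: both rest on expanding $\frac{\lambda}{2}\norm{v^\star_{t+1}-x'}^2$, the identity $v_t = \alpha_{t+1}^{-1}s_t - (\alpha_{t+1}^{-1}-1)x_t$ combined with the recursion $\alpha_t^{-2}=\alpha_{t+1}^{-2}-\alpha_{t+1}^{-1}$, convexity invoked through the prox-gradient $g=\lambda(s_t-\xopt_{t+1})$, and the variational inequality at $z=x_t$, which is precisely the paper's bound $\inprod{g}{\xopt_{t+1}-x_t}\ge F(\xopt_{t+1})-F(x_t)+V^F_{\xopt_{t+1}}(x_t)$. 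The differences are purely organizational (the paper bounds $\inprod{g}{v_t-x'}$ from both sides rather than reducing everything to a single nonpositive residual), and your explicit separation of $g$ from $\grad F(\xopt_{t+1})$ in the constrained case makes transparent what the paper delegates to its citation of Asi et al.
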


\begin{proof}
 We let $\gopt_{t+1}=\lambda\left(s_{t}-\xopt_{t+1}\right)$ and $v_{t+1}^\star = v_t- (\alpha_{t+1})^{-1} (s_t - \xopt_{t+1})$. Now we bound both sides of the quantity $\inner{\gopt_{t+1}}{v_{t}-x'}$. First, note that
	\[
	\frac{1}{\alpha_{t+1}}\Par{v_{t}-x'}=\frac{1}{\alpha_{t+1}}\Par{\xopt_{t+1}-x'}+\frac{1}{\alpha_{t}^2}\left(\xopt_{t+1}-x_{t}\right)-\frac{1}{\alpha_{t+1}^2}\left(\xopt_{t+1}-s_t\right).
	\]
	Since $\gopt_{t+1}\in\partial F\left(\xopt_{t+1}\right)$ (see Fact 1.4 by~\citet{asi2021stochastic}), $F$ is convex and $\inner{\gopt_{t+1}}{\xopt_{t+1}-s_t}=-\lambda\norm{\xopt_{t+1}-s_t}^{2}$,
	we have by update of $\alpha_t$ and $v_t$ that
	\begin{flalign}
		\frac{1}{\alpha_{t+1}}\inner{\gopt_{t+1}}{v_t-x'} & =\frac{1}{\alpha_{t+1}}\inner{\gopt_{t+1}}{\xopt_{t+1}-x'}+\frac{1}{\alpha_t^2}\inner{\gopt_{t+1}}{\xopt_{t+1}-x_{t}}-\frac{1}{\alpha_{t+1}^2}\inner{\gopt_{t+1}}{\xopt_{t+1}-s_t}\nonumber\\
		& \ge \frac{1}{\alpha_{t+1}}\Par{F\left(\xopt_{t+1}\right)-F\left(x'\right)}+\frac{1}{\alpha_{t}^2}\prn*{F(\xopt_{t+1})-F(x_{t})+V^F_{\xopt_{t+1}}\Par{x_t}}+\frac{2\lambda}{\alpha^2_{t+1}}\Veu_{\xopt_{t+1}}\Par{s_t}\nonumber\\
		& =\frac{1}{\alpha_{t+1}^2}\left(F\left(\xopt_{t+1}\right)-F\left(x'\right)\right)-\frac{1}{\alpha_{t}^2}\prn*{F(x_{t})-F(x')}+\frac{2\lambda}{\alpha^2_{t+1}}\Veu_{\xopt_{t+1}}\Par{s_t}+\frac{1}{\alpha_{t}^2}V^F_{\xopt_{t+1}}\Par{x_t}.\label{eq:catalyst-1}
	\end{flalign}
	
	On the other hand to upper bound $\frac{1}{\alpha_{t+1}}\inner{\gopt_{t+1}}{v_t-x'}$, note by definition of $\gopt_{t+1}$ and $v_{t+1}^\star $,
\[
\norm{v_{t+1}^\star-x'} ^{2}=\norm{ v_t-\frac{1}{\alpha_{t+1}\lambda}g^\star_{t+1}-x'}^{2}= \norm{v_{t}-x'} ^{2}-\frac{2}{\alpha_{t+1}\lambda}\inner{g_{t+1}^\star}{v_{t}-x'}+\frac{1}{\alpha_{t+1}^2}\norm{ s_t-x^\star_{t+1}} ^{2}.
\]

Combining the last two displays and rearranging, we obtain
	\begin{flalign*}
& \frac{1}{\alpha_{t+1}^2}\left(F\left(\xopt_{t+1}\right)-F\left(x'\right)\right)+\frac{\lambda}{2}\norm{v_{t+1}^\star-x'}^2\nonumber\\
 & \hspace{1em} \le \frac{1}{\alpha_{t}^2}\prn*{F(x_{t})-F(x')}+\frac{\lambda}{2}\norm{v_{t}-x'}^2-\frac{2\lambda}{\alpha^2_{t+1}}\Veu_{\xopt_{t+1}}\Par{s_t}-\frac{1}{\alpha_{t}^2}V^F_{\xopt_{t+1}}\Par{x_t}+\frac{\lambda}{2\alpha_{t+1}^2}\norm{ s_t-x_{t+1}^\star} ^{2}\nonumber\\
  & \hspace{1em} \le \frac{1}{\alpha_{t}^2}\prn*{F(x_{t})-F(x')}+\frac{\lambda}{2}\norm{v_{t}-x'}^2-\frac{\lambda}{\alpha^2_{t+1}}\Veu_{\xopt_{t+1}}\Par{s_t}-\frac{1}{\alpha_{t}^2}V^F_{\xopt_{t+1}}\Par{x_t}.
	\end{flalign*}	
\end{proof}

\begin{lemma}[Potential difference between exact and approximate proximal step]\label{lem:sapm-step-bound-2}
Following the same notation as in~\Cref{lem:sapm-step-bound-1}, for $x_{t+1}$ and $v_{t+1}$ defined as in~\Cref{alg:catalyst}, we have 
	\begin{flalign}\label{eq:catalyst-eq-2}
& \Ex*{\frac{1}{\alpha_{t+1}^2}\left(F\left(x_{t+1}\right)-F\left(x'\right)\right)+\frac{\lambda}{2}\norm{v_{t+1}-x'}^2|\filt}\nonumber\\
& \hspace{2em} \le \frac{1}{\alpha_{t+1}^2}\left(F\left(\xopt_{t+1}\right)-F\left(x'\right)\right)+\frac{\lambda}{2}\norm{v_{t+1}^\star-x'}^2+\frac{\lambda}{\alpha^2_{t+1}}\Veu_{\xopt_{t+1}}\Par{s_t}+\frac{1}{\alpha_{t}^2}V^F_{\xopt_{t+1}}\Par{x_t},
\end{flalign}
\end{lemma}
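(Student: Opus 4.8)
The plan is to control the two halves of the potential separately, conditioning throughout on $\filt$ so that $s_t,x_t,v_t,\xopt_{t+1}$ and $v_{t+1}^\star$ are all deterministic and the only randomness is in the two oracle outputs $x_{t+1}$ and $\tx_{t+1}$. The function-value term $\alpha_{t+1}^{-2}\Par{F(x_{t+1})-F(x')}$ uses only the $\ApproxProx$ guarantee on $x_{t+1}$, while the iterate term $\frac{\lambda}{2}\norm{v_{t+1}-x'}^2$ uses only the $\UnbiasedProx$ guarantee on $\tx_{t+1}$; I would bound each separately and add.

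For the iterate term I would first strip the projection: writing $\hat v_{t+1}\defeq v_t-\alpha_{t+1}^{-1}\Par{s_t-\tx_{t+1}}$ for the pre-projection point, non-expansiveness of $\proj_\xset$ and $x'\in\xset$ give $\norm{v_{t+1}-x'}\le\norm{\hat v_{t+1}-x'}$. Unbiasedness $\Ex*{\tx_{t+1}|\filt}=\xopt_{t+1}$ yields $\Ex*{\hat v_{t+1}|\filt}=v_{t+1}^\star$, so expanding $\norm{\hat v_{t+1}-x'}^2$ about the deterministic point $v_{t+1}^\star$ kills the cross term and leaves $\Ex*{\norm{\hat v_{t+1}-x'}^2|\filt}=\norm{v_{t+1}^\star-x'}^2+\alpha_{t+1}^{-2}\Ex*{\norm{\tx_{t+1}-\xopt_{t+1}}^2|\filt}$. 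The variance bound~\eqref{unbiased-prox-cond} (with $\xprev=x_t$) then shows $\frac{\lambda}{2}\Ex*{\norm{v_{t+1}-x'}^2|\filt}$ exceeds $\frac{\lambda}{2}\norm{v_{t+1}^\star-x'}^2$ by at most $\frac{1}{8\alpha_{t+1}^2}\Par{\lambda\Veu_{\xopt_{t+1}}\Par{s_t}+V^F_{\xopt_{t+1}}(x_t)}$.

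For the function-value term I would start from $F(x_{t+1})=\Flam_{s_t}(x_{t+1})-\lambda\Veu_{x_{t+1}}\Par{s_t}$, apply~\eqref{approx-prox-cond} (with $\xinit=s_t$, $\xprev=x_t$) to $\Flam_{s_t}(x_{t+1})$, and then cancel the leftover $-\lambda\Veu_{x_{t+1}}\Par{s_t}$ against a term of the same type. The crux is the auxiliary estimate $\tfrac12\lambda\Ex*{\Veu_{\xopt_{t+1}}(x_{t+1})|\filt}\le\tfrac16\lambda\Ex*{\Veu_{x_{t+1}}(s_t)|\filt}+\tfrac1{12}V^F_{\xopt_{t+1}}(x_t)$. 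I would prove it by combining $\lambda$-strong convexity of $\Flam_{s_t}$ (giving $\lambda\Veu_{\xopt_{t+1}}(x_{t+1})\le\Flam_{s_t}(x_{t+1})-\Flam_{s_t}(\xopt_{t+1})$), the $\ApproxProx$ error bound, and the triangle inequality $\Veu_{\xopt_{t+1}}(s_t)\le 2\Veu_{x_{t+1}}(s_t)+2\Veu_{\xopt_{t+1}}(x_{t+1})$: substituting the last into the first two produces a self-referential inequality in $\Ex*{\Veu_{\xopt_{t+1}}(x_{t+1})|\filt}$ that solves to the claimed bound. Feeding this back, and using the same triangle inequality once more to trade $\tfrac14\lambda\Veu_{\xopt_{t+1}}(s_t)$ for $\tfrac12\lambda\Veu_{x_{t+1}}(s_t)+\tfrac12\lambda\Veu_{\xopt_{t+1}}(x_{t+1})$, the $\Veu_{x_{t+1}}(s_t)$ contributions combine to a nonpositive coefficient and can be dropped, leaving $\Ex*{F(x_{t+1})|\filt}-F(\xopt_{t+1})\le\tfrac78\lambda\Veu_{\xopt_{t+1}}(s_t)+\tfrac{5}{24}V^F_{\xopt_{t+1}}(x_t)$.

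Finally I would add the two excesses. The $\lambda\Veu_{\xopt_{t+1}}(s_t)$ coefficients sum to $\alpha_{t+1}^{-2}\Par{\tfrac78+\tfrac18}=\alpha_{t+1}^{-2}$ and the $V^F_{\xopt_{t+1}}(x_t)$ coefficients to $\alpha_{t+1}^{-2}\Par{\tfrac{5}{24}+\tfrac18}=\tfrac{1}{3\alpha_{t+1}^2}$, giving total excess $\frac{\lambda}{\alpha_{t+1}^2}\Veu_{\xopt_{t+1}}(s_t)+\frac{1}{3\alpha_{t+1}^2}V^F_{\xopt_{t+1}}(x_t)$ over the ideal potential. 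To reach~\eqref{eq:catalyst-eq-2} it only remains to replace $\tfrac{1}{3\alpha_{t+1}^2}$ by $\tfrac{1}{\alpha_t^2}$ on the nonnegative Bregman term, which follows from $\frac{\alpha_{t+1}^2}{\alpha_t^2}=1-\alpha_{t+1}\ge\tfrac13$; this identity comes straight from the recursion $\alpha_{t+1}^{-2}-\alpha_{t+1}^{-1}=\alpha_t^{-2}$ together with $\alpha_{t+1}\le\frac{2}{t+3}\le\frac23$. The main obstacle is the self-referential auxiliary estimate and the exact bookkeeping of the constants $\tfrac78,\tfrac{5}{24},\tfrac18$ that makes the $\lambda\Veu_{\xopt_{t+1}}(s_t)$ coefficient land precisely at $\alpha_{t+1}^{-2}$; the remaining manipulations are routine.
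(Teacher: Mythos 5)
Your proof is correct and follows essentially the same route as the paper's: the same split into a function-value part controlled by $\ApproxProx$ and an iterate part controlled by $\UnbiasedProx$, the same self-referential helper estimate (the paper's~\eqref{eq:catalyst-helper}) proved by the same strong-convexity-plus-triangle-inequality argument, and identical constant bookkeeping ($\tfrac78$, $\tfrac{5}{24}$, $\tfrac18$ summing to coefficients $\alpha_{t+1}^{-2}$ and $\tfrac{1}{3\alpha_{t+1}^2}$). Your two small deviations are both fine and, if anything, slightly more careful than the paper: you strip the projection via non-expansiveness before the bias--variance expansion (the paper asserts $\E[v_{t+1}\,|\,\filt]=v^\star_{t+1}$, which implicitly ignores the projection), and you obtain $\alpha_{t+1}^2/\alpha_t^2 = 1-\alpha_{t+1}\ge \tfrac13$ directly from the recursion rather than from the interval bounds $\alpha_t\in\big[\tfrac{4/\sqrt{3}}{t+3},\tfrac{2}{t+2}\big]$ the paper cites.
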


\begin{proof}
To prove~\eqref{eq:catalyst-eq-2}, we consider the effect of approximation errors of $x_{t+1}$, $v_{t+1}$ in terms of $x^\star_{t+1}$, $v^\star_{t+1}$, respectively. First, by definition of $x_{t+1}$ and $\ApproxProx$ we have that
	\begin{flalign}\Ex*{F\left(x_{t+1}\right)|\filt}&=\Ex*{F\left(x_{t+1}\right)+\frac{\lambda}{2}\left\Vert x_{t+1}-s_t\right\Vert ^{2}|\filt}-\Ex*{\frac{\lambda}{2}\left\Vert x_{t+1}-s_t\right\Vert ^{2}|\filt}\nonumber\\
	& \le F\left(\xopt_{t+1}\right)+\lambda\Veu_{\xopt_{t+1}}\Par{s_t} + \frac{1}{8} \Par{\lambda\Veu_{x^\star_{t+1}}\Par{s_t}+V^F_{x_{t+1}^\star}(x_t)}-\Ex*{\lambda \Veu_{x_{t+1}}\Par{s_t} |\filt}\label{eq:catalyst-eq-3}
	\end{flalign}

Now we further have 
\begin{flalign*}
\Ex*{\lambda \Veu_{\xopt_{t+1}}\Par{x_{t+1}}|\filt} & \stackrel{(i)}{\le} \Ex*{F^\lambda_{s_t}(x_{t+1})|\filt}-F^\lambda_{s_t}(x_{t+1}^\star)\stackrel{(ii)}{\le} \frac{1}{8}\lambda \Veu_{\xopt_{t+1}}\Par{s_t}+\frac{1}{8}V^F_{\xopt_{t+1}}\Par{x_t}\\
& \stackrel{(iii)}{\le} \Ex*{\frac{1}{4}\lambda \Veu_{\xopt_{t+1}}\Par{x_{t+1}}+\frac{1}{4}\lambda \Veu_{x_{t+1}}\Par{s_{t}}|\filt}+\frac{1}{8}V^F_{\xopt_{t+1}}\Par{x_t},
\end{flalign*}
where we once again use $(i)$ the strong convexity of $F^\lambda$,  $(ii)$ the definition of $\ApproxProx$ and $(iii)$ the triangle inequality that $\norm{a + b}^2 \leq 2 \norm{a}^2 + 2 \norm{b}^2$ for any vectors $a,b$. 
By rearranging terms and rescaling by a factor of $1/2$ this implies equivalently
\begin{flalign}\label{eq:catalyst-helper}
	\Ex*{\frac{1}{2}\lambda \Veu_{\xopt_{t+1}}\Par{x_{t+1}}|\filt}\le \Ex*{\frac{1}{6}\lambda \Veu_{x_{t+1}}\Par{s_{t}}|\filt}+\frac{1}{12}V^F_{\xopt_{t+1}}\Par{x_t}.
\end{flalign}

Combining the above inequalities we have 
\begin{flalign}& \Ex*{F\left(x_{t+1}\right)|\filt}=\Ex*{F\left(x_{t+1}\right)+\frac{\lambda}{2}\left\Vert x_{t+1}-s_t\right\Vert ^{2}|\filt}-\Ex*{\frac{\lambda}{2}\left\Vert x_{t+1}-s_t\right\Vert ^{2}|\filt}\nonumber\\
& \hspace{3em}\stackrel{(i)}{\le} F\left(\xopt_{t+1}\right)+\frac{7}{8}\lambda\Veu_{\xopt_{t+1}}\Par{s_t} + \Ex*{\frac{1}{2}\lambda \Veu_{\xopt_{t+1}}\Par{x_{t+1}}+\frac{1}{2}\lambda \Veu_{x_{t+1}}(s_t)|\filt}+\frac{1}{8}V^F_{x_{t+1}^\star}(x_t)-\Ex*{\lambda \Veu_{x_{t+1}}\Par{s_t} |\filt}\nonumber\\
	& \hspace{3em} \stackrel{(ii)}{\le} F\left(\xopt_{t+1}\right)+\frac{7}{8}\lambda\Veu_{\xopt_{t+1}}\Par{s_t} + \frac{5}{24}V^F_{\xopt_{t+1}} \Par{x_t}+\Ex*{\Par{\frac{\lambda}{2}+\frac{\lambda}{6}-\lambda}\Veu_{x_{t+1}}\Par{s_t}|\filt}\nonumber\\
	& \hspace{3em} \le F\left(\xopt_{t+1}\right)+\frac{7}{8}\lambda\Veu_{\xopt_{t+1}}\Par{s_t} + \frac{5}{24}V^F_{\xopt_{t+1}} \Par{x_t},\label{eq:catalyst-eq-4}
\end{flalign}
where we use $(i)$ rearranging of terms and using the triangle inequality $\lambda\Veu_{\xopt_{t+1}}\Par{s_t} + \frac{1}{8} \lambda\Veu_{x^\star_{t+1}}\Par{s_t} \le  \frac{7}{8}\lambda \Veu_{x^\star_{t+1}}\Par{s_t}+  \Ex*{\frac{1}{2}\lambda \Veu_{\xopt_{t+1}}\Par{x_{t+1}}+\frac{1}{2}\lambda \Veu_{x_{t+1}}(s_t)|\filt}$ in~\eqref{eq:catalyst-eq-3}, and $(ii)$ plugging back the inequality~\eqref{eq:catalyst-helper}.
	
Now that given definition of $\UnbiasedProx$ for $v_{t+1}$ so that $\Ex*{v_{t+1}|\filt} = v_{t+1}^\star$ and consequently 
\begin{flalign}
\Ex*{\frac{\lambda}{2}\norm{v_{t+1}-x'}^2|\filt} & = \frac{\lambda}{2}\norm{v_{t+1}^\star-x'}^2+\Ex*{\frac{\lambda}{2}\norm{v_{t+1}-v^\star_{t+1}}^2|\filt}\nonumber\\
& \le \frac{\lambda}{2}\norm{v_{t+1}^\star-x'}^2+\frac{\lambda}{2\alpha_{t+1}^2}\Ex*{\norm{\tx_{t+1}-x^\star_{t+1}}^2|\filt}\nonumber\\
& \le \frac{\lambda}{2}\norm{v_{t+1}^\star-x'}^2+\frac{\lambda}{8\alpha_{t+1}^2}\Veu_{x^\star_{t+1}}(s_t)+ \frac{1}{8\alpha_{t+1}^2}V^F_{x^\star_{t+1}}(x_t).\label{eq:catalyst-eq-4}
\end{flalign}

Rescaling and summing up inequalities~\eqref{eq:catalyst-eq-3} and~\eqref{eq:catalyst-eq-4}, together with the bound that $\Par{\alpha_t}^2/\Par{\alpha_{t+1}}^2\le \frac{4(t+4)^2}{16(t+2)^2/3}\le 3$, this proves~\eqref{eq:catalyst-eq-2}, which also concludes the proof.
\end{proof}

\begin{proof}[Proof of~\Cref{thm:improved-catalyst}]
	By requirement of $\WarmStart$ function, we have $F(x_0)-F(x')\le \lambda R^2$. Applying the potential decreasing argument in~\Cref{lem:sapm-step-bound} recursively on $t=0,1,\cdots, T-1$ thus gives
	\begin{flalign*}
		\frac{1}{\alpha^2_{T}}\E \left[F(x_{T}) - F(x')\right] & \le \E \left[\frac{1}{\alpha^2_{T}} \Par{F(x_{T}) - F(x')} + \frac{\lambda}{2}\norm{v_{T}-x'}^2 \right]\\
		& \le 
		\frac{1}{\alpha^2_0} (F(x_{0}) - F(x')) +  \frac{\lambda}{2}\norm{x_0-x'}^2\le \frac{3}{2}\lambda R^2.
	\end{flalign*}
	Multiplying both by $\alpha_T^2$ and using the fact that for $T \ge \lceil \sqrt{6\lambda R^2/\epsilon}\rceil$, $\alpha_T^2 \le  \frac{4}{(T+2)^2}\le \frac{2\epsilon}{3\lambda R^2}$, we show that $x_T$ output by~\Cref{alg:catalyst} satisfy that 
	\[
	\E \left[F(x_{T}) - F(x')\right] \le \epsilon.
	\] 
	
	The number of calls to each oracles follow immediately.
	
	When implementing $\UnbiasedProx_{F,\lambda}$ using $\MLMC$, guarantees of~\Cref{lem:MLMC} immediately implies the correctness and the total number of (expected) calls to~$\ApproxProx_{F,\lambda}$.
\end{proof}

We now show an adaptation of our framework to the strongly-convex setting in~\Cref{alg:catalyst-sc}. We prove its guarantee as follows.

\begin{algorithm2e}[h]
	\caption{Restarted $\ouralg$}
	\label{alg:catalyst-sc}
	\DontPrintSemicolon
	\codeInput $F$: $\xset\rightarrow \R$, $\ouralg$ \;
	{\bfseries Parameter:} $\lambda,R>0$, iteration number $T$, epoch number $K$\;
	\label{line:catalyst:warm_start_start-sc}Initialize $x^{(0)}\leftarrow \WarmStart_{F,\lambda}(R^2)$\Comment*{To satisfy $\E F(x_0)-\min_{x'}F(x')\le \lambda R^2$}
	\For{$k=0$ {\bfseries{\textup{to}}} $K-1$}
	{Run $\ouralg$ on $F$ with $x_0=v_0=x^{(k)}$ without $\WarmStart$ (\Cref{line:catalyst:warm_start_start}) for $T$ iterations\;
		\Comment*{Halving error to true optimizer in each iteration and recurse}
	}
	\codeReturn $x^{(K)}$
\end{algorithm2e}

\catalystsc*

\begin{proof}[Proof of~\Cref{prop:catalyst-sc}]
	Let $x'$ be the minimizer of $F$, we show by induction that for the choice of $T=O\Par{\sqrt{\lambda/\gamma}}$, the iterates $x^{(k)}$ satisfy the condition that 
	\begin{equation}\label{eq:catalyst-sc-guarantee}
	\E\left[F\Par{x^{(k)}}-F(x')+\frac{\lambda}{2}\norm{x^{(k)}-x'}^2\right]\le \frac{3}{2^{k-1}}\lambda R^2,~\text{for}~k=0,1,\cdots,K.
	\end{equation}
	
	For the base case $k=0$, we have the inequality holds immediately by definition of procedure $\WarmStart$. Now suppose the inequality~\eqref{eq:catalyst-sc-guarantee} holds for $k$. For $k+1$, by~\Cref{lem:sapm-step-bound} and proof of~\Cref{thm:improved-catalyst} we obtain
	\[
	\frac{1}{\alpha_T^2}\E\left[F\Par{x^{(k+1)}}-F\Par{x'}\right]\le \E\left[F\Par{x^{(k)}}-F\Par{x'}\right]+\frac{\lambda}{2}\norm{x^{(k)}-x'}^2\le \frac{3}{2^{k-1}}\lambda R^2.
	\]
	By our choice of $T =O\Par{\sqrt{\lambda/\gamma}}$, we have 
	\[
	\E\left[F\Par{x^{(k+1)}}-F\Par{x'}\right]\le \frac{3}{2\cdot 2^{k}}\gamma R^2,
	\]
	and consequently by $\gamma$-strong convexity it holds that
	\[
	\E\left[\frac{\gamma}{2}\norm{x^{(k+1)}-x'}^2\right]\le \E\left[F\Par{x^{(k+1)}}-F\Par{x'}\right]\le \frac{3}{2\cdot 2^{k}}\gamma R^2\implies \E\left[\frac{\lambda}{2}\norm{x^{(k+1)}-x'}^2\right]\le \frac{3}{2\cdot 2^{k}}\lambda R^2.
	\]
	Summing up the two inequalities together we obtain
	\[
	\E\left[F\Par{x^{(k+1)}}-F(x')+\frac{\lambda}{2}\norm{x^{(k+1)}-x'}^2\right]\le \frac{3}{2^{k}}\lambda R^2,
	\]
	which shows by math induction that the inequality~\eqref{eq:catalyst-sc-guarantee} holds for $k=0,1,\cdots, K$.
	
	Now by choice of $K=O\Par{\log\Par{\lambda R^2/\epsilon}}$, we have 
	\begin{equation*}
	\E\left[F\Par{x^{(K)}}-F(x')\right]\le \E\left[F\Par{x^{(K)}}-F(x')+\frac{\lambda}{2}\norm{x^{(K)}-x'}^2\right]\le \frac{3}{2^{K-1}}\lambda R^2\le \epsilon,
	\end{equation*}
	which proves the correctness of the algorithm.
	
	The algorithm uses one call to~procedure $\WarmStart$ in~\Cref{line:catalyst:warm_start_start-sc}. The number of calls to procedures $\ApproxProx$, $\UnbiasedProx$ is $K$ times the number of calls within each epoch $k\in[K]$, which is bounded by $O(T)$. The case when implementing $\UnbiasedProx$ by $\MLMC$ and $\ApproxProx$ follows immediately from~\Cref{lem:MLMC}.
	
\end{proof}

\section{Proofs for~\Cref{sec:fs}}\label{apdx:fs}

\begin{proposition}[Guarantee for $\SVRGone$]\label{prop:SVRG}
For any convex, $L$-smooth $f_i(x):\xset\rightarrow\R$, and parameter $\lambda\ge0$, consider the finite-sum problem $\fun(x) \defeq \sum_{i\in[n]}\frac{1}{n}\phi_i(x)$ where $\phi_i(x) \defeq f_i(x) + \frac{\lambda}{2}\norm{x-s}^2$. Given a centering point $s$, an initial point $\xinit$, and an anchor point $\xfull$,~\cref{alg:SVRG} with instantiation of $\phi_i(x) = f_i(x) + \frac{\lambda}{2}\norm{x-s}^2$, outputs a point $\bx = \frac{1}{T}\sum_{t\in[T]}x_{t-1}$ satisfying
\begin{align*}
 \E F_s^{\lambda}\Par{\bx} - F_s^\lambda\Par{x^\star}  \le \frac{2}{\eta T} \Veu_{x^\star}\Par{\xinit} + 4 \eta LV^F_{x^\star}\Par{\xfull}
 ~
 \text{ where }
 ~
 \xopt \defeq \arg\min_{x\in\xset}\fun(x)\,.
\end{align*}
The algorithm uses a total of $O(n+T)$ gradient queries.
\end{proposition}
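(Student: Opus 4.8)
The plan is to establish this as a one-epoch proximal-SVRG guarantee, following the standard template of~\citet{xiao2014proximal} but with two problem-specific twists: the variance-reduction anchor $\xfull$ is decoupled from the initial point $\xinit$, and every error term is measured against the proximal minimizer $\xopt$. The three ingredients are (i) a variance bound on the gradient estimator $g_t$, (ii) a single-step descent inequality for the projected step $x_{t+1}=\proj_\xset(x_t-\eta g_t)$, and (iii) a telescoping-plus-Jensen argument that turns the per-iterate bounds into a guarantee for the average $\bx$. The key structural observation I would use throughout is that the regularizer $\frac{\lambda}{2}\norm{\cdot-s}^2$ is identical across all $\phi_i$, so it is evaluated \emph{exactly} at $x_t$ inside $g_t$: writing $g_t=v_t+\lambda(x_t-s)$ with $v_t\defeq \nabla f_{i_t}(x_t)-\nabla f_{i_t}(\xfull)+\nabla F(\xfull)$, one has $\E_{i_t} g_t=\nabla\Flam_s(x_t)$ and $g_t-\nabla\Flam_s(x_t)=v_t-\nabla F(x_t)$, so the estimator variance only involves the $f_i$-gradients and picks up the smoothness constant $L$ (rather than $L+\lambda$).

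For ingredient (i) I would bound $\E_{i_t}\norm{g_t-\nabla\Flam_s(x_t)}^2=\Var_{i_t}(v_t)\le \E_{i_t}\norm{\nabla f_{i_t}(x_t)-\nabla f_{i_t}(\xfull)}^2$, then split through $\xopt$ via $\norm{a-b}^2\le 2\norm{a}^2+2\norm{b}^2$ and apply the co-coercivity of each convex $L$-smooth $f_i$, namely $\norm{\nabla f_i(a)-\nabla f_i(\xopt)}^2\le 2L\,V^{f_i}_{\xopt}(a)$. Averaging over $i$ collapses $\frac1n\sum_i V^{f_i}_{\xopt}(\cdot)$ into $V^F_{\xopt}(\cdot)$ and yields $\E_{i_t}\norm{g_t-\nabla\Flam_s(x_t)}^2\le 4L\Par{V^F_{\xopt}(x_t)+V^F_{\xopt}(\xfull)}$, matching the variance bound~\eqref{eq:proof-sketch-svrg} after using $V^F_{\xopt}(x_t)\le V^{\Flam_s}_{\xopt}(x_t)$.

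For ingredient (ii) I would invoke the textbook projected-stochastic-step inequality: using optimality of $x_{t+1}$ for the projection subproblem (the three-point/prox lemma), convexity of $\Flam_s$ to lower-bound $\inprod{\nabla\Flam_s(x_t)}{x_t-\xopt}$, smoothness of $\Flam_s$ to control $\inprod{\nabla\Flam_s(x_t)}{x_{t+1}-x_t}$, and a Young split of the noise cross-term $\inprod{g_t-\nabla\Flam_s(x_t)}{x_{t+1}-x_t}$ against the $\norm{x_{t+1}-x_t}^2$ term, I obtain (for $\eta$ a small constant multiple of $1/L$)
\begin{equation*}
\E\norm{x_{t+1}-\xopt}^2\le \norm{x_t-\xopt}^2-2\eta\,\E\Brack{\Flam_s(x_{t+1})-\Flam_s(\xopt)}+\eta^2\,\E\norm{g_t-\nabla\Flam_s(x_t)}^2.
\end{equation*}
The crucial self-bounding step is $V^F_{\xopt}(x_t)\le V^{\Flam_s}_{\xopt}(x_t)\le \Flam_s(x_t)-\Flam_s(\xopt)$, where the last inequality uses that $\xopt$ minimizes $\Flam_s$ over $\xset$ (so $\inprod{\nabla\Flam_s(\xopt)}{x_t-\xopt}\ge 0$); substituting the variance bound thus makes the noise term proportional to the suboptimality gaps themselves.

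Finally, for (iii) I would telescope the one-step inequality over $t=0,\dots,T-1$, using $\norm{x_0-\xopt}^2=2\Veu_{\xopt}(\xinit)$ and $\E\norm{x_T-\xopt}^2\ge 0$. The progress term is evaluated at index $t+1$ while the variance term sits at index $t$; choosing $\eta\le c/L$ makes the coefficient $2\eta-4\eta^2 L$ of the gap-sum strictly positive, so the gaps cancel up to a boundary contribution at $x_0$, which is a lower-order $\tfrac{O(\eta L)}{T}V^{\Flam_s}_{\xopt}(\xinit)$ term that I would fold into the leading $\tfrac{2}{\eta T}\Veu_{\xopt}(\xinit)$ term via smoothness of $\Flam_s$. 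Dividing by $\eta T$ and applying Jensen ($\Flam_s(\bx)\le \tfrac1T\sum_t\Flam_s(x_t)$) yields the claimed bound; the gradient count is immediate, as one full gradient $\nabla\Phi(\xfull)$ costs $n$ queries and each of the $T$ iterations costs $O(1)$, for $O(n+T)$ total. I expect the main obstacle to be ingredient (iii): reconciling the $x_t$/$x_{t+1}$ index mismatch in the telescoping while keeping constants tight enough to land on $\tfrac{2}{\eta T}\Veu_{\xopt}(\xinit)+4\eta L\,V^F_{\xopt}(\xfull)$, which is precisely what forces the step-size restriction controlling the $x_0$ boundary term.
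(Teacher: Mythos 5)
Your proposal is correct and follows essentially the same route as the paper's proof: the same variance bound obtained by splitting through $\xopt$ and applying co-coercivity of each $L$-smooth $f_i$ (the paper's \Cref{lem:SVRG-grad}), the same one-step inequality from \citet{xiao2014proximal} (\Cref{lem:SVRG-onestep-progress}), and the same telescoping that exploits the self-bounding $V^{\Flam_s}_{\xopt}(x_t)\le \Flam_s(x_t)-\Flam_s(\xopt)$ via constrained optimality of $\xopt$. Even your handling of the $x_t$/$x_{t+1}$ index mismatch and the folding of the boundary term $\tfrac{O(\eta L)}{T}V^{\Flam_s}_{\xopt}(\xinit)$ into the leading term via $(L+\lambda)$-smoothness matches the paper's derivation exactly.
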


To prove~\Cref{prop:SVRG}, we first recall the following basic fact for smooth functions.

\begin{lemma}
Let $f:\xset\rightarrow\R$ be an $L$-smooth convex function. For any $x,x'\in\xset$ we have 
\begin{equation}\label{eq:prop-smoothness-1}
	\frac 1L \norm { \nabla f(x) - \nabla f(x') }^2 \leq  \left\langle \nabla f(x) - \nabla f(x') , x-x' \right\rangle
\end{equation}
and 
\begin{equation}\label{eq:prop-smoothness-2}
\frac{1}{2L}  \norm { \nabla f(x) - \nabla f(x') }^2  \leq f(x') - f(x) + \left\langle \nabla f(x), x-x'\right\rangle.
\end{equation}
\end{lemma}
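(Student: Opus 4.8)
Both displays are standard consequences of convexity and $L$-smoothness, and I would establish the second inequality \eqref{eq:prop-smoothness-2} first, then obtain the first \eqref{eq:prop-smoothness-1} by a one-line symmetrization. The whole argument rests on a single auxiliary fact (the ``descent lemma'' consequence for smooth convex functions), so there is no genuine obstacle---only a minor point to state carefully.

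To prove \eqref{eq:prop-smoothness-2}, fix $x$ and consider the shifted function $g(z) \defeq f(z) - \inner{\nabla f(x)}{z}$. Since $g$ differs from $f$ only by a linear term, it is convex and $L$-smooth, with $\nabla g(z) = \nabla f(z) - \nabla f(x)$; in particular $\nabla g(x) = 0$, so $x$ is a global minimizer of $g$. The plan is then to invoke the standard bound that for any $L$-smooth convex function attaining its minimum, $g(z) - \min_w g(w) \ge \frac{1}{2L}\norm{\nabla g(z)}^2$. Evaluating this at $z = x'$ gives $\frac{1}{2L}\norm{\nabla f(x') - \nabla f(x)}^2 \le g(x') - g(x)$, and expanding $g(x') - g(x) = f(x') - f(x) + \inner{\nabla f(x)}{x - x'}$ recovers exactly \eqref{eq:prop-smoothness-2}.

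For \eqref{eq:prop-smoothness-1} I would simply apply \eqref{eq:prop-smoothness-2} twice---once as written and once with the roles of $x$ and $x'$ interchanged---and add the two inequalities. The function-value terms cancel, the two gradient-norm terms combine into $\frac{1}{L}\norm{\nabla f(x) - \nabla f(x')}^2$, and the inner-product terms combine into $\inner{\nabla f(x) - \nabla f(x')}{x - x'}$, which is precisely the co-coercivity bound \eqref{eq:prop-smoothness-1}.

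The only step warranting care is the auxiliary inequality $g(z) - \min_w g(w) \ge \frac{1}{2L}\norm{\nabla g(z)}^2$, which I would either cite as a textbook fact (e.g.\ Nesterov) or derive in one line: minimizing the smoothness upper bound $g(y) \le g(z) + \inner{\nabla g(z)}{y - z} + \frac{L}{2}\norm{y - z}^2$ over $y$ (taking the gradient step $y = z - \frac{1}{L}\nabla g(z)$) yields $\min_w g(w) \le g(z) - \frac{1}{2L}\norm{\nabla g(z)}^2$. The implicit assumption here is that smoothness holds along the segment used by this minimizing step; this is automatic when each $f_i$ is smooth on the ambient space, as is the case for the finite-sum losses considered in \Cref{sec:fs}.
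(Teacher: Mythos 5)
Your proof is correct. Note that the paper supplies no proof of this lemma at all---it is stated as a recalled ``basic fact for smooth functions'' (Nesterov's standard characterization of $L$-smooth convex functions)---and your argument is exactly the canonical derivation the paper implicitly invokes: establish \eqref{eq:prop-smoothness-2} by applying the bound $g(z)-\min_w g(w)\ge \frac{1}{2L}\norm{\nabla g(z)}^2$ to the tilted function $g(z)=f(z)-\inner{\nabla f(x)}{z}$, whose minimizer is $x$, and then add this inequality to its copy with $x$ and $x'$ interchanged to obtain the co-coercivity bound \eqref{eq:prop-smoothness-1}. Your closing caveat is substantive rather than cosmetic: both inequalities can genuinely fail for functions that are convex and $L$-smooth only on a proper convex subset $\xset$, precisely because the gradient step $z-\frac{1}{L}\nabla g(z)$ may exit $\xset$, so the ambient smoothness you flag---which the paper tacitly assumes when applying the lemma to its finite-sum losses---is indeed needed.
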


We also observe the following few properties of~\Cref{alg:SVRG}.

\begin{lemma}[Gradient estimator properties]\label{lem:SVRG-grad}
For any $x,\xfull\in\xset$, sample $i\in[n]$ uniformly and let $g(x) = \nabla f_i(x) - \nabla f_i(\xfull) + \nabla F(\xfull) + \lambda (x-s)$. It holds that $\E\left[ g(x) \right]= \nabla F^{\lambda}_s(x)$, and for~$x^\star = \arg\min_{x\in\xset}F^\lambda_s(x)$,
\begin{align}
\label{eq:grad_est_props}
\mathbb{E} \left[ \norm{g(x)-\nabla F^{\lambda}_s(x)}^2 \right] & \leq  \mathbb{E} \left[ \norm{ \nabla f_i(\xfull) - \nabla f_i(x) }^2 \right] \leq 4L \left( V^F_{\xopt}\Par{\xfull} + V^{F^\lambda_s}_{\xopt}\Par{x} \right).
\end{align}
\end{lemma}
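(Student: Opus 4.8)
The plan is to verify the three assertions in order, using only the definition of the estimator, the elementary identity $\E\norm{Z-\E Z}^2 \le \E\norm{Z}^2$, the smoothness bound~\eqref{eq:prop-smoothness-2}, and the linearity of the Bregman divergence in its generating function. First, unbiasedness is immediate: since $i$ is uniform on $[n]$ and $F = \frac1n\sum_{i} f_i$, we have $\E[\nabla f_i(x)] = \nabla F(x)$ and $\E[\nabla f_i(\xfull)] = \nabla F(\xfull)$, so the $\nabla f_i(\xfull)$ term cancels the correction $\nabla F(\xfull)$ in expectation and $\E[g(x)] = \nabla F(x) + \lambda(x - s) = \nabla \Flam_s(x)$.

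For the first inequality, I would write $Z_i \defeq \nabla f_i(x) - \nabla f_i(\xfull)$, so that $g(x) = Z_i + \nabla F(\xfull) + \lambda(x - s)$ and, since $\E[Z_i] = \nabla F(x) - \nabla F(\xfull)$, the error is exactly the centered vector $g(x) - \nabla\Flam_s(x) = Z_i - \E[Z_i]$. The bias-variance identity then gives $\E\norm{g(x) - \nabla\Flam_s(x)}^2 = \E\norm{Z_i}^2 - \norm{\E Z_i}^2 \le \E\norm{Z_i}^2 = \E\norm{\nabla f_i(x) - \nabla f_i(\xfull)}^2$, which is the first claimed bound.

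For the second inequality, I would insert $\nabla f_i(\xopt)$ and apply the inequality $\norm{a - b}^2 \le 2\norm{a}^2 + 2\norm{b}^2$ to obtain
\[
\norm{\nabla f_i(x) - \nabla f_i(\xfull)}^2 \le 2\norm{\nabla f_i(x) - \nabla f_i(\xopt)}^2 + 2\norm{\nabla f_i(\xfull) - \nabla f_i(\xopt)}^2.
\]
Applying~\eqref{eq:prop-smoothness-2} to each convex $L$-smooth $f_i$ with base point $\xopt$ gives $\frac{1}{2L}\norm{\nabla f_i(\xopt) - \nabla f_i(z)}^2 \le V^{f_i}_{\xopt}(z)$ for $z \in \{x, \xfull\}$; taking expectations over $i$ and using $\E_i V^{f_i}_{\xopt}(z) = V^F_{\xopt}(z)$ (the Bregman divergence is linear in its generating function and $F = \frac1n\sum_i f_i$) yields $\E\norm{\nabla f_i(x) - \nabla f_i(\xfull)}^2 \le 4L\bigl(V^F_{\xopt}(x) + V^F_{\xopt}(\xfull)\bigr)$.

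The final step, which is the only place requiring any care, is to match the asymmetric right-hand side of the statement. Since $\Flam_s = F + \frac\lambda2\norm{\cdot - s}^2$ and Bregman divergences add, $V^{\Flam_s}_{\xopt}(x) = V^F_{\xopt}(x) + \lambda\Veu_{\xopt}(x) \ge V^F_{\xopt}(x)$, so I can upgrade the current-point term $V^F_{\xopt}(x)$ to the larger $V^{\Flam_s}_{\xopt}(x)$ while leaving the anchor term $V^F_{\xopt}(\xfull)$ untouched, producing the stated bound $4L\bigl(V^F_{\xopt}(\xfull) + V^{\Flam_s}_{\xopt}(x)\bigr)$. I do not expect a genuine obstacle here; the main thing to track is precisely this asymmetry between the $F$-divergence at the anchor $\xfull$ and the $\Flam_s$-divergence at $x$, which is exactly what lets the $V^F_{\xopt}(\xfull)$ term be absorbed into the relaxed error criterion downstream in~\Cref{prop:SVRG}.
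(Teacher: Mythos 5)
Your proof is correct and follows essentially the same route as the paper's: unbiasedness by direct averaging, the first inequality via $\E\norm{Z-\E Z}^2\le\E\norm{Z}^2$ applied to the centered vector, and the second by inserting $\nabla f_i(\xopt)$, applying~\eqref{eq:prop-smoothness-2} to each $f_i$, and averaging. Your final step---upgrading $V^F_{\xopt}(x)$ to $V^{F^\lambda_s}_{\xopt}(x)$ via additivity of Bregman divergences---is just a cleaner rendering of the explicit algebraic expansion the paper uses to establish the same identity $V^F_{\xopt}(x)=V^{F^\lambda_s}_{\xopt}(x)-\lambda\Veu_{\xopt}(x)$.
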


\begin{proof}
First, by definition of how we construct $g$ it holds that
\begin{flalign*}
\mathbb{E}\left[ g(x) \right] & = \lambda (x-s) +  \nabla F(\xfull) + \frac{1}{n} \sum_{i=1}^n \Par{\nabla f_i(x) - \nabla f_i(\xfull)}\\
& = \lambda (x-s) +  \nabla F(\xfull) + \nabla F(x) - \nabla F(\xfull) = \nabla F^{\lambda}_s(x). 
\end{flalign*}
This proves that  $g$ is an unbiased estimator of $\nabla F^\lambda_s$.

Next, for such unbiased estimator $g$ we have 
\begin{align*}
	\mathbb{E} \left[ \norm{g(x)-\nabla F^{\lambda}_s(x)}^2 \right]& = \mathbb{E} \left[ \norm{\nabla F(\xfull) - \nabla F(x) - \nabla f_i(\xfull) + \nabla f_i(x) }^2 \right]\\
	&  \stackrel{}{\leq} \mathbb{E} \left[ \norm{\nabla f_i(\xfull) - \nabla f_i(x) }^2 \right],
\end{align*}
where  we used that $\mathbb{E}\left[ \norm{Z - \mathbb{E}Z}^2 \right] \leq \mathbb{E} \left[ \norm{Z}^2 \right]$ for any random $Z$. This proves the first inequality in \eqref{eq:grad_est_props}.  

For the second inequality, note
\begin{equation}\label{eq:SVRG-bound-todo}
\begin{aligned}
& \mathbb{E} \left[ \norm{\nabla f_i(\xfull) - \nabla f_i(x) }^2 \right] \stackrel{(i)}{\leq} 2  \mathbb{E} \left[ \norm{\nabla f_i(\xfull) - \nabla f_i(x^\star) }^2 + \norm{\nabla f_i(x) - \nabla f_i(x^\star) }^2 \right] \\
&\quad\quad\quad\stackrel{(ii)}{\leq} 4 L \cdot \mathbb{E} \left[ f_i(\xfull) - f_i(x^\star) + \left\langle \nabla f_i(x^\star), x^\star - \xfull \right\rangle + f_i(x) - f_i(x^\star) + \left\langle \nabla f_i(x^\star), x^\star - x \right\rangle \right] \\
&\quad\quad\quad = 4L \cdot \left( V^F_{\xopt}\Par{\xfull} + F(x) - F(x^\star) + \left\langle \nabla F(x^\star), x^\star - x \right\rangle\right),
\end{aligned}
\end{equation}
where we use $(i)$ Cauchy-Schwarz inequality for Euclidean norms, and $(ii)$ the property of smoothness of $f_i$ (see~Eq.~\eqref{eq:prop-smoothness-2}).

Using the standard inequality that $2 \left\langle a-b,b-c \right\rangle = \norm{a-c}^2 - \norm{b-c}^2 - \norm{a-b}^2$, we also have 
\begin{align*}
&F(x) - F(x^\star) + \left\langle \nabla  F(x^\star), x^\star - x\right\rangle \\
&\quad= F^{\lambda}_s(x) - F^{\lambda}_s(x^\star)+ \inprod{\nabla F(x^\star) + \lambda (x^\star - s)}{x^\star-x} - \lambda \left\langle x^\star - s, x^\star - x \right\rangle  - \frac{\lambda}{2} \norm{x-s}^2 + \frac{\lambda}{2} \norm{ x^\star - s}^2 \\
&\quad\le V^{F^{\lambda}_s}_{\xopt}\Par{x}-\lambda\Veu_{\xopt}(x)\le V^{F^{\lambda}_s}_{\xopt}\Par{x}.
\end{align*}
 Substituting this in~\eqref{eq:SVRG-bound-todo} proves the second inequality in \eqref{eq:grad_est_props}.
\end{proof}

The following proof on progress per step follows from the standard analysis by~\citet{xiao2014proximal} (as the constrained finite-sum minimization we consider is a special case of theirs), which we include the full statement here for completeness.

\begin{lemma}[Progress per step of $\SVRGone$, cf.~\citet{xiao2014proximal}]\label{lem:SVRG-onestep-progress}
Let $\xopt \in  \arg\min_{x\in\xset}\fun(x)$ where  each $\phi_i$ is $L$-smooth and convex. For $\eta\le 1/L$, consider step $t$ in $\SVRGone$, define $\Delta_t = g_t-\nabla\fun(x_t)$ and $\xopt_{t+1} = \proj_{\xset}\Par{x_t-\eta g_t}$, it holds that 
\begin{align*}
\mathbb{E} \norm{x_{t+1}-\xopt}^2\le \norm{x_{t}-\xopt}^2-2\eta\left[\fun(x_{t+1})-\fun(\xopt)\right]+2\eta^2\E\norm{\Delta_t}^2.
\end{align*}
\end{lemma}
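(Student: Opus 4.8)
The plan is to reproduce the standard one-step analysis of projected variance-reduced SGD (following \citet{xiao2014proximal}), viewing $x_t$ as fixed and taking expectation only over the fresh sample $i_t$ drawn at step $t$. The engine is the first-order optimality condition of the projection. Since $x_{t+1}=\proj_{\xset}(x_t-\eta g_t)$ minimizes $\tfrac12\norm{x-(x_t-\eta g_t)}^2$ over $\xset$, we have $\inner{x_{t+1}-x_t+\eta g_t}{\xopt-x_{t+1}}\ge 0$, and after applying the polarization identity $2\inner{x_t-x_{t+1}}{x_{t+1}-\xopt}=\norm{x_t-\xopt}^2-\norm{x_t-x_{t+1}}^2-\norm{x_{t+1}-\xopt}^2$ this rearranges to
\[
\eta\inner{g_t}{x_{t+1}-\xopt}\le \tfrac12\Par{\norm{x_t-\xopt}^2-\norm{x_{t+1}-\xopt}^2-\norm{x_t-x_{t+1}}^2}.
\]

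First I would convert the left-hand inner product into a function-value gap. Writing $g_t=\grad\fun(x_t)+\Delta_t$ and splitting $\inner{\grad\fun(x_t)}{x_{t+1}-\xopt}=\inner{\grad\fun(x_t)}{x_{t+1}-x_t}+\inner{\grad\fun(x_t)}{x_t-\xopt}$, I use $L$-smoothness of $\fun$ (which holds since each $\phi_i$ is $L$-smooth) to get $\inner{\grad\fun(x_t)}{x_{t+1}-x_t}\ge \fun(x_{t+1})-\fun(x_t)-\tfrac{L}{2}\norm{x_{t+1}-x_t}^2$, and convexity of $\fun$ to get $\inner{\grad\fun(x_t)}{x_t-\xopt}\ge \fun(x_t)-\fun(\xopt)$. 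Substituting and then using $\eta\le 1/L$ so that $\eta L\norm{x_{t+1}-x_t}^2-\norm{x_t-x_{t+1}}^2\le 0$, the two squared-distance terms cancel and I obtain
\[
\norm{x_{t+1}-\xopt}^2\le \norm{x_t-\xopt}^2-2\eta\Brack{\fun(x_{t+1})-\fun(\xopt)}-2\eta\inner{\Delta_t}{x_{t+1}-\xopt}.
\]

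The main obstacle is the final noise term: $x_{t+1}$ depends on the sample $i_t$ through $g_t$, so $\Delta_t$ and $x_{t+1}$ are correlated and I cannot simply invoke $\E\Delta_t=0$. I would resolve this in the usual way by introducing the deterministic full-gradient iterate $\bar x_{t+1}:=\proj_{\xset}(x_t-\eta\grad\fun(x_t))$ and decomposing $\inner{\Delta_t}{x_{t+1}-\xopt}=\inner{\Delta_t}{\bar x_{t+1}-\xopt}+\inner{\Delta_t}{x_{t+1}-\bar x_{t+1}}$. The first piece has zero expectation, since $\E\Delta_t=0$ over $i_t$ and $\bar x_{t+1}$ does not depend on $i_t$; for the second, nonexpansiveness of the Euclidean projection gives $\norm{x_{t+1}-\bar x_{t+1}}\le \eta\norm{g_t-\grad\fun(x_t)}=\eta\norm{\Delta_t}$, whence Cauchy--Schwarz yields $-2\eta\inner{\Delta_t}{x_{t+1}-\bar x_{t+1}}\le 2\eta^2\norm{\Delta_t}^2$. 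Taking $\E$ over $i_t$ then produces the claimed inequality. (Note the statement writes $\xopt_{t+1}$ for the point called $x_{t+1}$ in \Cref{alg:SVRG}; I treat these as the same iterate and reserve the separate symbol $\bar x_{t+1}$ for the full-gradient step used only in the argument.)
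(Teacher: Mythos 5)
Your proof is correct: the paper itself gives no argument for this lemma (it defers entirely to \citet{xiao2014proximal}), and what you write is precisely that standard analysis---projection optimality plus polarization, smoothness and convexity of $\fun$ to extract the function gap, and, crucially, the auxiliary full-gradient iterate $\bar x_{t+1}$ with nonexpansiveness of the projection to handle the correlation between $\Delta_t$ and $x_{t+1}$, which yields exactly the stated constant $2\eta^2\E\norm{\Delta_t}^2$. Your side remark is also right that the statement's $\xopt_{t+1}$ is a typo for the iterate $x_{t+1}$, and your conditional-expectation reading (over $i_t$ given $x_t$) is the intended one.
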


With these helper lemmas, we are ready to formally prove~\Cref{prop:SVRG}.

\begin{proof}[Proof of~\Cref{prop:SVRG}]

Consider the $t$-th step of~\Cref{alg:SVRG}, by~\Cref{lem:SVRG-onestep-progress} and $\Phi = F^\lambda_s$, for $\eta\le \frac{1}{2L}\le \frac{1}{\lambda+L}$ we have
\[ 
\mathbb{E} \norm{x_{t+1} - x^\star}^2 \le \norm{x_t - x^\star}^2 - 2 \eta \mathbb{E} \left[  F^\lambda_s\Par{x_{t+1}}-F^\lambda_s\Par{\xopt} \right] + \eta^2 \mathbb{E} \left[ \norm{g_t-\nabla F^{\lambda}_s(x_t)}^2 \right]. 
\]
Our bounds on the variance of SVRG plus the $L$-smoothness of $F$ yields
\begin{align*}
\mathbb{E} \left[ \norm{g_t-\nabla F^{\lambda}_s(x_t)}^2 \right] & \leq 4L \left( V^F_{\xopt}\Par{\xfull} + V^{F^{\lambda}_s}_{\xopt}\Par{x_t}\right).
\end{align*}
Thus we have by definition of $x_{t+1}$ and divergence,
\begin{align}\label{eq:SVRG-one-step-progress}
\mathbb{E} \Veu_{x^\star}\Par{x_{t+1}} \leq & \Veu_{x^\star}\Par{x_{t}} -  \eta \mathbb{E} \left[  F^\lambda_s\Par{x_{t+1}}-F^\lambda_s\Par{\xopt} \right] + 2\eta^2  LV^{F^{\lambda}_s}_{\xopt}\Par{x_t} + 2\eta^2 L V^F_{\xopt}\Par{\xfull}.
\end{align}

Note $V^F_{x^\star}\Par{\xfull}$ is independent of $x_t$. Telescoping bounds~\eqref{eq:SVRG-one-step-progress} and using optimality of $\xopt$ so that $\inprod{\nabla F^\lambda_s\Par{\xopt}}{\xopt-x}\le 0$ for $t\ge 1$, we obtain 
\[
\mathbb{E} \Veu_{x^\star}\Par{x_{T}} \leq \Veu_{x^\star}\Par{\xinit} - \eta \Par{1-2\eta L}\mathbb{E} \left[\sum_{t=0}^{T-1} \Par{ F^\lambda_s\Par{x_{t+1}}-F^\lambda_s\Par{\xopt}} \right] + 2\eta^2 LV^{F^{\lambda}_s}_{\xopt}\Par{\xinit}+ 2\eta^2 L T V^F_{x^\star}\Par{\xfull}.
\]
Rearranging terms, dividing over $\eta T/2$, and using convexity of $F^\lambda_s$, we have for $\eta \le \tfrac{1}{32L}$ and $T\ge \frac{32}{\eta\lambda}\ge \frac{128\eta L^2}{\lambda}$
\begin{align*}
 & \E F_s^{\lambda}\Par{\frac{1}{T}\sum_{t\in[T]}x_{t}} - F_s^\lambda\Par{x^\star} \le \frac{1}{T} \E \sum_{t=1}^{T}\left( F_s^{\lambda}(x_t) - F_s^\lambda\Par{x^\star} \right) \leq \frac{2(1-2\eta L)}{T}\E \left[\sum_{t=0}^{T-1}   \left(F^{\lambda}_s(x_t)  - F^{\lambda}_s(x^\star) \right)\right] \\
& \quad\quad\quad \le \frac{2}{\eta T} \Veu_{x^\star}\Par{\xinit} + 4 \eta LV^F_{x^\star}\Par{\xfull}+\frac{4\eta L}{T}V^{F^{\lambda}_s}_{\xopt}\Par{\xinit}\\
& \quad\quad\quad \stackrel{(\star)}{\le} \frac{1}{8}V^F_{x^\star}\Par{\xfull}+\Par{\frac{2}{\eta T}+\frac{4\eta L(L+\lambda)}{T}}\Veu_{\xopt}\Par{\xinit}\\
& \quad\quad\quad\le \frac{1}{8}V^F_{x^\star}\Par{\xfull}+\frac{1}{8}\lambda\Veu_{\xopt}\Par{\xinit},
\end{align*}
where for inequality $(\star)$ we use the fact that $F^\lambda_s$ is $(L+\lambda)$-smooth, and the property of smoothness.
\end{proof}

To show how we implement the $\WarmStart$ procedure required in~\Cref{alg:catalyst}, we first show the guarantee of the low-accuracy solver for finite-sum minimization of~\Cref{alg:SVRG-warmstart}.

\begin{lemma}[Low-accuracy solver for finite-sum minimization]\label{lem:SVRG-warmstart}
For any problem~\eqref{def:problem-fs} with minimizer $\xopt$, smoothness parameter $L$, initial point $\xinit$ so that $R=\norm{\xopt-\xinit}$, and any $\alpha\ge L/n$, \Cref{alg:SVRG-warmstart} with $T = 32n$, finds a point $x^{(K)}$ after $K$  epochs such that $\E~F\Par{x^{(K)}}-F(\xopt)\le \frac{1}{2}n^{-1+2^{-K}}L R^2$.%
\end{lemma}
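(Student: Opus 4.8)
The plan is to prove by simultaneous induction on $k$ the two invariants
\[
\E\norm{x^{(k)}-\xopt}^2\le R^2
\qquad\text{and}\qquad
\E\brk*{F(x^{(k)})-F(\xopt)}\le \tfrac12\, n^{-1+2^{-k}}LR^2 .
\]
The base case $k=0$ is immediate: $x^{(0)}=\xinit$ gives the distance bound with equality, and $L$-smoothness gives $F(\xinit)-F(\xopt)\le \frac{L}{2}\norm{\xinit-\xopt}^2=\frac12 LR^2$, which matches the function-value bound since $n^{-1+2^{0}}=1$.

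For the function-value step I would invoke \Cref{prop:SVRG} with $\lambda=0$ and $\xinit=\xfull=x^{(k)}$, together with $V^F_{\xopt}(x^{(k)})\le F(x^{(k)})-F(\xopt)$, to obtain
\[
\E\brk*{F(x^{(k+1)})-F(\xopt)}\le \frac{1}{\eta_{k+1}T}\,\E\norm{x^{(k)}-\xopt}^2+4\eta_{k+1}L\,\E\brk*{F(x^{(k)})-F(\xopt)} .
\]
The doubling schedule is calibrated so that this closes exactly: substituting $T=32n$, $\eta_{k+1}=(8Ln^{2^{-k-1}})^{-1}$, the distance invariant $\E\norm{x^{(k)}-\xopt}^2\le R^2$, and the inductive function-value bound, each of the two terms equals $\tfrac14 n^{-1+2^{-k-1}}LR^2$ (using $2^{-k}-2^{-k-1}=2^{-k-1}$), so their sum is precisely the target $\tfrac12 n^{-1+2^{-k-1}}LR^2$.

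The main obstacle is propagating the distance invariant, and here the naive route fails. Telescoping the per-step descent \eqref{eq:SVRG-one-step-progress} and discarding the negative suboptimality term only yields $\E\Veu_{\xopt}(x^{(k+1)})\le \Veu_{\xopt}(x^{(k)})+2\eta_{k+1}^2L(T+1)\,\E V^F_{\xopt}(x^{(k)})$, whose growth term is of order $R^2$ in \emph{every} epoch because of the fixed anchor $\xfull=x^{(k)}$ in the \Cref{lem:SVRG-grad} variance bound; a quick check shows the induced two–dimensional recursion in $(\text{distance},\text{gap})$ then has spectral radius strictly above one for any choice of the constants, forcing the distance to blow up. The fix is to \emph{retain} the progress term: summing \eqref{eq:SVRG-one-step-progress} over the epoch keeps $-\eta_{k+1}(1-2\eta_{k+1}L)\sum_t (F(x_t)-F(\xopt))$, and the plan is to show this accumulated in-epoch suboptimality dominates the $O(R^2)$ variance-floor growth. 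The mechanism is that the step size is small, $\eta_{k+1}L=\tfrac18 n^{-2^{-k-1}}$, so SVRG makes only slow per-step progress and the iterate gaps remain $\Omega(\Delta_k)$ for many iterations, making $\sum_t(F(x_t)-F(\xopt))$ large relative to $\eta_{k+1}^2 LT\,\Delta_k$.

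I expect this last balancing to be the part requiring the most care, since one must lower-bound the in-epoch progress without the help of strong convexity (the output-gap bound alone is too weak), and one must also pass from the distance of the \emph{averaged} output to the average of the per-iterate divergences $\Veu_{\xopt}(x_t)$ via convexity. Once the cancellation between progress and the variance floor is carried through so that $\E\norm{x^{(k+1)}-\xopt}^2\le R^2$ is preserved, both invariants close and the claimed bound $\E F(x^{(K)})-F(\xopt)\le \tfrac12 n^{-1+2^{-K}}LR^2$ follows at $k=K$.
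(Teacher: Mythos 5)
Your function-value induction is exactly the paper's proof: invoke \Cref{prop:SVRG} with $\xinit=\xfull=x^{(k)}$ (and $\lambda=0$), bound $V^F_{\xopt}(x^{(k)})\le F(x^{(k)})-F(\xopt)$ via first-order optimality of $\xopt$, and verify that $\eta_{k+1}=(8Ln^{2^{-k-1}})^{-1}$ and $T=32n$ make each of the two resulting terms equal to $\tfrac14 n^{-1+2^{-k-1}}LR^2$; the paper phrases the same calibration through $c^{(k)}=n^{-1+2^{-k}}$ and the identity $c^{(k+1)}=\sqrt{c^{(k)}/n}$. You are also right that this step silently consumes a distance bound $\E\Veu_{\xopt}(x^{(k)})\le R^2/2$ that the paper's proof never establishes by induction. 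But the intended resolution is far simpler than your Step 3: in the setting where this lemma is applied (\Cref{thm:OptCatfs}), the domain $\xset$ has diameter $R$ and every iterate of \Cref{alg:SVRG} is produced by $\proj_{\xset}$, so $\norm{x^{(k)}-\xopt}\le R$ holds for every $k$ with no induction at all; the hypothesis $R=\norm{\xinit-\xopt}$ is meant to be read alongside this diameter bound, not as the sole control on the iterates.

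Your proposed mechanism for propagating the distance invariant without that diameter bound is a genuine gap. To cancel the variance-floor growth $2\eta_{k+1}^2LT\,V^F_{\xopt}(x^{(k)})$, which can be of order $R^2$ per epoch, against the retained progress $\eta_{k+1}(1-2\eta_{k+1}L)\sum_t\E\brk*{F(x_t)-F(\xopt)}$, you must lower-bound the accumulated in-epoch suboptimality, and without strong convexity no such lower bound exists: nothing prevents the iterates from becoming near-optimal after a few steps, in which case $\sum_t(F(x_t)-F(\xopt))$ is arbitrarily small while the gradient estimator, anchored at $\xfull=x^{(k)}$, genuinely retains per-step variance of order $L\,V^F_{\xopt}(x^{(k)})$ — over an epoch the induced squared displacement $T\eta_{k+1}^2L\,V^F_{\xopt}(x^{(k)})$ really can be $\Theta(R^2)$, so the floor is not slack that cleverer bookkeeping removes. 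In particular your claim that the gaps "remain $\Omega(\Delta_k)$ for many iterations" because the step size is small is false in general: small steps bound movement, not progress from below. (Your spectral-radius diagnosis of the naive two-variable recursion is qualitatively right, though the consequence is constant-factor growth per epoch, hence a $(\log n)^{O(1)}$ degradation over $K=\log\log n$ epochs rather than outright blow-up — still short of the claimed bound.) So the deferred balancing step cannot be carried out as planned; with the bounded-domain reading it is also unnecessary, and the rest of your argument then closes exactly as in the paper.
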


\begin{proof}[Proof of~\Cref{lem:SVRG-warmstart}]

We prove the argument by math induction and let $c^{(k)} = n^{-1+2^{-k}}$. Note that for the base case $k=0$, $F(x^{(0)})-F(\xopt)\le \frac{c^{(0)}}{2}LR^2$ by Eq.~\eqref{eq:prop-smoothness-2}. Now suppose the above inequality holds for $k$, i.e. $F(x^{(k)})-F(\xopt)\le \tfrac{c^{(k)}}{2} LR^2$. Then for epoch $k$ by guarantee of~\Cref{prop:SVRG} together  given choice of $\eta_{k+1} = \tfrac{1}{8L\sqrt{n c^{(k)}}}$ and $T = 32n$ we have
\[
\E F\Par{x^{(k+1)}} - F\Par{x^\star} \le \frac{2}{\eta_{k+1} T} \Veu_{x^\star}\Par{x^{(k)}} + 4 \eta_{k+1} LV^F_{x^\star}\Par{x^{(k)}}\le \frac{LR^2}{4}\sqrt{\frac{c^{(k)}}{n}}+ \frac{LR^2}{4}\sqrt{\frac{c^{(k)}}{n}} = \frac{c^{(k+1)}}{2}LR^2.
\]
where for the last inequality we note that series $c^{(k)}$ satisfies $c^{(k+1)} = \sqrt{c^{(k)}/n}$. 
\end{proof}

Consequently, after $K = O(\log\log n)$ epochs, we have 
\[
c^{(K)}\le \frac{2}{n}\implies \E F\Par{x^{(K)}} - F\Par{x^\star}\le \frac{L}{n}R^2\le \alpha R^2,
\]
for any $\alpha\ge L/n$, which immediately proves the following corollary.

\warmstartfs*

Now we give the formal proof of~\Cref{thm:OptCatfs}, the main theorem showing one can use our accelerated scheme $\ouralg$ to solve the finite-sum minimization problem~\eqref{def:problem-fs} efficiently.

\OptCatfs*

\begin{proof}[Proof of~\Cref{thm:OptCatfs}]
We first consider the objective function $F$ without strong convexity. The correctness of the algorithm follows directly from~\Cref{thm:improved-catalyst}, together with~\Cref{coro:approxprox-fs} and~\Cref{coro:SVRG-warmstart}. For the query complexity, calling $\WarmStart$-$\SVRG$ to implement the procedure of $\WarmStart_{F,\lambda}(R^2)$  requires gradient queries $O\Par{n\log \log n}$ following~\Cref{coro:SVRG-warmstart}. The main~\Cref{alg:catalyst} calls $O\Par{R\sqrt{\lambda/\epsilon}}=O(R\sqrt{Ln^{-1}\epsilon^{-1}})$ of procedure $\ApproxProx_{F,\lambda}$, which by implementation of $\SVRGone$ each requires $O(n+L/\lambda)=O(n)$ gradient queries following~\Cref{coro:approxprox-fs}. Summing them together gives the claimed gradient complexity in total. 

When the objective $F$ is $\gamma$-strongly-convex, the proof follows by the same argument as above and the guarantee of restarted $\ouralg$ in~\Cref{thm:improved-catalyst}.
\end{proof}

\notarxiv{
\subsection{Additional Details on Empirical Results}\label{ssec:experiments-more}

Here we provide additional details for the empirical results in~\Cref{ssec:experiments}.

\paragraph{SVRG implementation.} We implement the SVRG iterates as in \Cref{alg:SVRG}, using $T=2n$ and $\eta=4$ (i.e., the inverse of the smoothness of each function). However, instead of outputting the average of all iterates, we return the average of the final $T/2=n$ iterates.

\paragraph{Catalyst implementation.} Our implementation follows closely Catalyst C1* as described in~\cite{lin2017catalyst}, where for the subproblem solver we use repeatedly called \Cref{alg:SVRG} with the parameters and averaging modification described above, checking the C1 termination criterion between each call.

\paragraph{RECAPP implementation.} Our RECAPP implementation follows \Cref{alg:catalyst,alg:unbiased-min-MLMC}, with \Cref{alg:SVRG} and \Cref{alg:SVRG-warmstart} implemented \ApproxProx and \WarmStart, respectively, and \Cref{alg:SVRG} configured and modified and described above. In \Cref{alg:unbiased-min-MLMC} we set the parameters $j_0=0$ and we test $p\in\{0,0.1,0.25,0.5\}$. The setting $p=0$ which corresponds to setting $\tilde{x}_{t+1} = x_{t+1}$ in~\Cref{alg:catalyst}) is a baseline meant to test whether MLMC is helpful at all. For $p>0$ we change the parameter $T$ in \Cref{alg:SVRG} such that the \emph{expected} amount of gradient computations is the same as for $p=0$. Slightly departing from the pseudocode of \Cref{alg:catalyst}, we take $x_{t+1}$ to be $x^{(J)}$ computed in \Cref{alg:unbiased-min-MLMC}, rather than $x^{(0)}$, since it is always a more accurate proximal point approximation. We note that our algorithm still has provable guarantees (with perhaps different constant factors) under this configuration.

\paragraph{Parameter tuning.} For RECAPP and Catalyst, we tune the proximal regularization parameter $\lambda$ (called $\kappa$ in~\cite{lin2017catalyst}). For each problem and each algorithm, we test $\lambda$ values of the form $\alpha L / n$, where $L=0.25$ is the objective smoothness, $n$ is the dataset size and $\alpha$ in the set $\{0.001, 0.003, 0.01, 0.03, 0.1, 0.3, 1.0, 3.0, 10.0\}$. We report results for the best $\lambda$ value for each problem/algorithm pair. 
}

\section{Proofs for~\Cref{sec:minimax}}\label{apdx:minimax}

 We first consider a special case of standard mirror-prox-type methods~\citep{nemirovski2004prox} with Euclidean $\ell_2$-divergence on $x$ and $y$ domains separately, i.e. $V_{x,y}(x',y') = \Veu_{x}(x')+\Veu_{x}(y')$ . This ensures each step of the methods can be implemented efficiently. Below we state its guarantees, which is standard from literature and we include here for completeness.

\begin{restatable}[$T$-step guarantee of $\MirrorProx$, cf. also~\citet{nemirovski2004prox}]{lemma}{lemsmmirrorprox}\label{lem:mirror-prox}
	Let any $\phi(x,y), (x,y)\in\xset\times\yset$ be a convex-concave, $L$-smooth function, $\MirrorProx(\phi,L,\xinit,\yinit, T)$ in \Cref{alg:mirror-prox} with initial points $(\xinit, \yinit)$ and step size $\eta= 1/L$ outputs a point $(x_T,y_T)$ satisfying for any $x,y\in\xset\times \yset$
	\begin{equation}\label{eq:mirror-prox-non-sm}
	\begin{aligned}
	\phi(x_T,y)- \phi(x,y_T)\le \frac{L}{T}\Par{\Veu_{\xinit}\Par{x}+\Veu_{\yinit}\Par{y}}.
	\end{aligned}
	\end{equation}
\end{restatable}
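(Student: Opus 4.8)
The plan is to recognize $\MirrorProx$ as the classical extragradient/mirror-prox iteration applied to the monotone variational inequality associated with $\phi$, and then run the standard telescoping argument. First I would encode the saddle-point structure through the monotone operator $G(z) \defeq (\grad_x \phi(x,y), -\grad_y\phi(x,y))$ acting on $z = (x,y) \in \xset\times\yset$, equipped with the separable divergence $V_z(z') = \Veu_x(x') + \Veu_y(y')$. Convexity-concavity of $\phi$ makes $G$ monotone, while $L$-smoothness makes $G$ $L$-Lipschitz in the induced Euclidean norm. In this language each iteration first computes an extrapolation $w_t = (u_t, v_t)$ via a Euclidean prox step from $z_t = (x_t, y_t)$ using $G(z_t)$, and then the new iterate $z_{t+1} = (x_{t+1}, y_{t+1})$ via a prox step from the \emph{same} center $z_t$ using $G(w_t)$.

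The heart of the proof is a per-step inequality. I would invoke the standard three-point property of the Euclidean prox map: if $w = \arg\min_{z} \inner{g}{z} + V_{\bar z}(z)$ over a convex set, then $\inner{g}{w - z} \le V_{\bar z}(z) - V_w(z) - V_{\bar z}(w)$ for every $z$, which follows from first-order optimality and the identity $\inner{w - \bar z}{z - w} = V_{\bar z}(z) - V_w(z) - V_{\bar z}(w)$. Applying this to the update step (with $g = \eta G(w_t)$, center $z_t$, comparator $z$) and to the extrapolation step (with $g = \eta G(z_t)$, center $z_t$, comparator $z_{t+1}$), then writing $\inner{\eta G(w_t)}{w_t - z} = \inner{\eta G(w_t)}{z_{t+1} - z} + \inner{\eta G(z_t)}{w_t - z_{t+1}} + \inner{\eta(G(w_t) - G(z_t))}{w_t - z_{t+1}}$ and bounding the last cross term via Young's inequality and $L$-Lipschitzness as $\eta L \norm{z_t - w_t}\norm{w_t - z_{t+1}} \le \frac{\eta^2 L^2}{2}\norm{z_t - w_t}^2 + \frac{1}{2} \norm{w_t - z_{t+1}}^2$. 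Provided $\eta \le 1/L$, all the produced divergence terms cancel, leaving the clean recursion $\inner{G(w_t)}{w_t - z} \le \frac{1}{\eta}\Par{V_{z_t}(z) - V_{z_{t+1}}(z)}$.

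Finally I would telescope this over $t = 0, \dots, T-1$; the divergences collapse to $\frac{1}{\eta}V_{z_0}(z) \le \frac{1}{\eta}\Par{\Veu_{\xinit}(x) + \Veu_{\yinit}(y)}$ after dropping the nonnegative $V_{z_T}(z)$. Convexity of $\phi$ in $x$ and concavity in $y$ give the linearization lower bound $\inner{G(w_t)}{w_t - z} \ge \phi(u_t, y) - \phi(x, v_t)$, so summing, dividing by $T$, and applying Jensen to the averaged extragradient iterate $(\bar u, \bar v) = \frac{1}{T}\sum_{t}(u_t, v_t)$ yields $\phi(\bar u, y) - \phi(x, \bar v) \le \frac{1}{\eta T}\Par{\Veu_{\xinit}(x) + \Veu_{\yinit}(y)}$; setting $\eta = 1/L$ is exactly the claim.

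The main obstacle is the per-step cancellation: one must arrange the extrapolation decomposition so that the three negative Bregman terms generated by the two prox inequalities ($-V_{z_t}(z_{t+1})$, $-V_{w_t}(z_{t+1})$, $-V_{z_t}(w_t)$) exactly absorb both the $+V_{z_t}(z_{t+1})$ created by the decomposition and the two $\frac{1}{2}\norm{\cdot}^2$ error terms from Young's inequality, leaving only the nonpositive residual $(\eta^2 L^2 - 1)V_{z_t}(w_t) \le 0$. A secondary point worth flagging is that the guarantee is genuinely ergodic — it holds for the averaged iterate, not the last one — so I would interpret the returned $(x_T, y_T)$ as these running averages (equivalently, accumulate $(\bar u, \bar v)$ inside the loop).
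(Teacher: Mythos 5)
Your proof is correct and is exactly the classical extragradient/mirror-prox analysis that the paper itself does not spell out: the appendix states \Cref{lem:mirror-prox} ``for completeness'' and defers entirely to \citet{nemirovski2004prox}, and your three-point-property decomposition, Young's-inequality cancellation leaving the residual $(\eta^2L^2-1)\Veu_{z_t}(w_t)\le 0$, telescoping, linearization via convexity-concavity, and Jensen step reconstruct precisely that standard argument. You are also right to flag the ergodic subtlety: the $O(L/T)$ gap bound holds only for the averaged extrapolation iterates $\frac{1}{T}\sum_t (u_t,v_t)$, not the last iterate the pseudocode literally returns, so reading $(x_T,y_T)$ as those running averages is the correct (and intended) interpretation.
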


Next we give complete proofs on the implementation of $\ApproxProx_{F,\mu}$ and $\WarmStart_{F,\mu}$, using $\MirrorProx$ (\Cref{alg:mirror-prox}) with proper choices of initialization $\xinit$, $\yinit$, and~$\WarmStart$-$\textsc{Minimax}$ (\Cref{alg:warmstart-minimax}), respectively.

\lemapproxproxminimax*

\begin{proof}[Proof of~\Cref{lem:approxprox-minimax}]
	We incur $\MirrorProx$ with $\phi(x,y) = f(x,y) + \frac{\mu}{2}\norm{x-s}^2$, smoothness $L+\mu,$ initial point $\xinit, \yinit$, and number of iterations $T = \frac{64(L+\mu)}{\mu} $. By guarantee~\eqref{eq:mirror-prox-non-sm} in~\Cref{lem:mirror-prox}, the algorithm outputs $x_T, y_T$ such that for any $x\in\xset$, $y\in\yset$
	\begin{align*}
		\phi(x_T,y)-\phi(x, y_T)\le \frac{1}{64}\mu \Par{\Veu_{\xinit}\Par{x}+\Veu_{\yinit}\Par{y}}.
	\end{align*}
	
	Suppose $\phi$ has $(x^\star,y^\star)$ as its unique saddle-point, in particular we pick $x = x^\star$ and $y =\ybr_{x_T}\defeq \arg\max_{y\in\yset} f(x_T,y) = \arg\max_{y\in\yset} \phi(x_T,y)$ in the above inequality to obtain
	\begin{equation}\label{eq:minimax-ineq}
		\begin{aligned}
	F_s^\mu\Par{x_T}-F_s^\mu\Par{x^\star} & = \phi\Par{x_T,\ybr_{x_T}} - \phi\Par{x^\star,y^\star} = \Par{\phi\Par{x_T,\ybr_{x_T}} - \phi\Par{x^\star,y_T}+\Par{\phi\Par{x^\star,y_T}} - \phi\Par{x^\star,y^\star}}\\
	 & \le \phi\Par{x_T,\ybr_{x_T}} - \phi\Par{x^\star,y_T}
	 \le \frac{1}{64}\mu \Par{\Veu_{\xinit}\Par{\xopt}+\Veu_{\yinit}\Par{\ybr_{x_T}}},
	\end{aligned}
	\end{equation}
	where for the first inequality we use the definition that $y^\star = \arg\max_{y\in\yset}\phi(x^\star,y)$.
Now for the LHS of~\eqref{eq:minimax-ineq}, we have
\begin{flalign}
	& F_s^\mu\Par{x_T}-F_s^\mu\Par{x^\star}\nonumber\\
	& \hspace{2em} \ge \Par{1-\frac{1}{32}}\Par{F_s^\mu\Par{x_T}-F_s^\mu\Par{x^\star}} +\frac{1}{32}\Par{\phi\Par{x_T,\ybr_{x_T}}-\phi\Par{x_T,\yopt}}\nonumber +\frac{1}{32}\Par{\phi\Par{x_T,\yopt}-\phi\Par{\xopt,\yopt}}\nonumber\\
	& \hspace{2em} \ge \Par{1-\frac{1}{32}}\Par{F_s^\mu\Par{x_T}-F_s^\mu\Par{x^\star}}+\frac{\mu}{32} \Veu_{\ybr_{x_T}}\Par{\yopt}.\label{eq:minimax-ineq-1}
\end{flalign}
	Plugging~\eqref{eq:minimax-ineq-1} back to~\eqref{eq:minimax-ineq} and rearranging terms, we obtain
		\begin{equation}\label{eq:minimax-almost-final}
\begin{aligned}
	\Par{F_s^\mu\Par{x_T}-F_s^\mu\Par{x^\star}} \le & \frac{\mu/64}{1-1/32}\Par{\Veu_{\xinit}\Par{\xopt}+\Veu_{\yinit}\Par{\ybr_{x_T}}}-\frac{\mu/32}{1-1/32}\Veu_{\ybr_{x_T}}\Par{\yopt}\\
	\stackrel{(\star)}{\le} & \frac{\mu/64}{1-1/32}\Veu_{\xinit}\Par{\xopt}+ \frac{\mu/32}{1-1/32}\Veu_{\yinit}\Par{\yopt} +\frac{\mu/32}{1-1/32}\Veu_{\ybr_{x_T}}\Par{\yopt} - \frac{\mu/32}{1-1/32}\Veu_{\ybr_{x_T}}\Par{\yopt}\\
	\le & \frac{1}{16}\mu\Par{\Veu_{\xinit}\Par{\xopt}+ \Veu_{\yinit}\Par{\yopt}},
\end{aligned}
\end{equation}
where we use $(\star)$ Cauchy-Schwarz inequality for Euclidean norm. 

Further, to bound RHS of~\eqref{eq:minimax-almost-final}, we note that by definition of $F$ and $\yinit \gets \brorcl_{f}(\xprev)$,
\begin{align*}
\mu\Veu_{\yinit}\Par{\yopt} = & \mu \Veu_{\ybr_{\xprev}}\Par{\yopt}\stackrel{(i)}{\le}  f\Par{\xprev,\ybr_{\xprev}} - f\Par{\xprev,\yopt}\\
 \stackrel{(ii)}{\le}  &f\Par{\xprev,\ybr_{\xprev}} -f\Par{\xopt,\yopt}-\langle\nabla _xf\Par{\xopt,\yopt},\xprev-\xopt\rangle  = V^F_{\xopt}\Par{\xprev},
\end{align*}
where we use $(i)$ strong convexity in $y$ of $-f$ and $(ii)$ convexity of $f(\cdot,\yopt)$.

Plugging this back in~\eqref{eq:minimax-almost-final}, we obtain 
\begin{align*}
F_s^\mu\Par{x_T}-F_s^\mu\Par{x^\star} \le \frac{\mu}{16}\Veu_{\xinit}\Par{\xopt}+\frac{1}{8}V^F_{\xopt}\Par{\xprev} \le \frac{1}{8}\Par{\mu\Veu_{\xopt}\Par{\xinit}+V^F_{\xopt}\Par{\xprev}}.
\end{align*}

Thus we prove that $\ApproxProx_{F,\mu}$ can be implemented via $\MirrorProx$ properly. The total complexity includes one call to $\brorcl_{f}(\cdot)$ and $O(T) = O(L/\mu)$ gradient queries as each iteration in $\MirrorProx$ requires two gradients.
\end{proof}

\lemwarmstartminimax*

\begin{proof}[Proof of~\Cref{lem:minimax-warm-start}]
	Given domain diameter $R,R'$ and the initialization $\xinit$, $\yinit$, we first use accelerated gradient descent (cf.~\citet{nesterov1983method}) to find a $\Theta(LR^2)$-approximate solution of $\max_{y\in\yset}f(\xinit,y)$ (which we set to be $\yinit'$) using $O\Par{\sqrt{L/\mu}\log\Par{R'/R}}$ gradient queries. We recall the definition of $\ybr_{\xinit}\defeq \arg\max_{y\in\yset}f(\xinit,y)$ and thus
	\begin{equation}\label{eq:minimax-opt-of-yinit}
	\begin{aligned}
		\mu\Veu_{\yinit'}\Par{\ybr_{\xinit}} & \le f(\xinit,\yinit')-f(\xinit,\ybr_{\xinit}) \le \frac{1}{2}LR^2.
	\end{aligned}
	\end{equation}
	
	Now we incur $\MirrorProx$ with objective $\phi(x,y) = f(x,y) + \frac{\mu}{2}\norm{x-\xinit}^2$, smoothness $L+\mu$, initial points $(\xinit,\yinit')$. We let $\xopt\Par{\phi},\yopt$ to denote its unique saddle point.  Thus, we have by iterating guarantee of~\eqref{eq:mirror-prox-non-sm} in~\Cref{lem:mirror-prox} with $T = O(L/\mu)$ iterations, after $K = O\Par{\log{L/\mu}}$ calls to $\MirrorProx$ we have
	\begin{align*}
	& \Veu_{x^{(K)}}\Par{x^\star\Par{\phi}}+ \Veu_{y^{(K)}}\Par{x^\star}\le  \frac{1}{40}\Par{\frac{\mu}{L}}^4\Par{\Veu_{\xinit}\Par{x^\star\Par{\phi}}+ \Veu_{\yinit'}\Par{y^\star}} \\
	& \quad\quad\quad\stackrel{(i)}{\le} \frac{1}{40}\Par{\frac{\mu}{L}}^4\Par{\Veu_{\xinit}\Par{x^\star\Par{\phi}}+ 2\Veu_{\yinit'}\Par{\ybr_{\xinit}}+2\Veu_{\ybr_{\xinit}}\Par{y^\star}}\\
	& \quad\quad\quad\stackrel{(ii)}{\le} \frac{1}{40}\Par{\frac{\mu}{L}}^4 \Par{\frac{1}{2}R^2+ \frac{LR^2}{\mu}+2\frac{L^2}{\mu^2}\Veu_{\xinit}\Par{\xopt\Par{\phi}}}\\
	& \quad\quad\quad\stackrel{}{\le}  \frac{1}{4}\Par{\frac{\mu}{L}}^4\frac{L^2}{\mu^2}R^2 =  \frac{\mu^2}{4L^2}R^2,
	\end{align*}
	where we use $(i)$ Cauchy-Schwarz inequality for Euclidean norms, and $(ii)$ condition \eqref{eq:minimax-opt-of-yinit} and the fact that $\ybr_{x}$ is $L/\mu$-Lipschitz in $x$.

Thus given $F(x) = \max_{y}f(x,y)$ being $(L + L^2/\mu)$-smooth, we have
\begin{align}\label{eq:minimax-bound-on-F}
F\Par{x^{(K)}}-F\Par{x^\star\Par{\phi}}\le \Par{L+L^2/\mu}\Veu_{\xopt\Par{\phi}}\Par{x^{(K)}}\le \frac{\mu}{2}R^2.
\end{align}
Note we also have
\begin{align*}
F\Par{x^\star\Par{\phi}}\le F^{\mu}_{\xinit}\Par{x^\star\Par{\phi}}\stackrel{(\star)}{\le} F^{\mu}_{\xinit}\Par{x^\star}= F\Par{x^\star}+\frac{\mu}{2}\norm{\xopt-\xinit}^2\le F\Par{x^\star}+\frac{\mu}{2}R^2,
\end{align*}
where we use $(\star)$ that $x^\star\Par{\phi}$ minimizes $F^{\mu}_{\xinit}(x)$. Plugging this back to~\eqref{eq:minimax-bound-on-F}, we obtain $F\Par{x^{(K)}}-F\Par{\xopt}\le \mu R^2$.

The gradient complexity of mirror-prox part is $O(KT) = O(L/\mu\log(L/\mu))$. Summing this together with the gradient complexity for accelerated gradient descent used in obtaining $\yinit'$ gives the claimed query complexity.

\end{proof}

\section{Generalization of Framework and Proof of~\Cref{thm:minimax-delta}}\label{apdx:minimax-delta}

In this section, we present a generalization of the framework, where we allow additive errors when implementing 
$\ApproxProx$ and $\UnbiasedProx$ (\Cref{def:approx-prox-delta} and~\ref{def:unbiased-prox-delta}). When the additive error is small enough, it would contributes to at most $O(\epsilon)$ additive error in the function error and thus generalize our framework (\Cref{alg:catalyst-delta} and~\Cref{prop:improved-catalyst-delta}). In comparison to prior works APPA/Catalyst~\citep{frostig2015regularizing,lin2015universal,lin2017catalyst}, in the application to solving max-structured problems our additive error comes from some efficient method with cheap total gradient costs, thus only contributing to the low-order terms in the oracle complexity (\Cref{thm:minimax-delta}). 

We first re-define the following procedures of $\ApproxProx$ and $\UnbiasedProx$, which also tolerates additive ($\delta$-)error. 

\begin{definition}[$\ApproxProx^\delta$]\label{def:approx-prox-delta}
	Given convex function $F$: $\xset\rightarrow\R$, regularization parameter $\lambda>0$, a centering point $s \in\xset$ and two points $\xinit, \xprev\in\xset$, $\ApproxProx_{F,\lambda}^\delta(s;\xinit,\xprev)$ is a procedure that outputs an approximate solution $x$ such that for $x^\star = \prox(s) = \arg\min_{x\in\xset} F^\lambda_s(x)$,
\begin{equation}\label{approx-prox-cond-gen}
\begin{aligned}
\E \Flam_s(x) - \Flam_s(x^\star)\le \frac{1}{8} & \Par{\lambda\Veu_{x^\star}\Par{\xinit}+V^F_{x^\star}(\xprev)}+\delta.
\end{aligned}
\end{equation}
\end{definition}

\begin{definition}[$\UnbiasedProx^\delta$]\label{def:unbiased-prox-delta}
	Given convex function $F$: $\xset\rightarrow\R$, regularization parameter $\lambda>0$, a centering point $s \in\xset$, two points $\xinit, \xprev\in\xset$, $\UnbiasedProx_{F,\lambda}^\delta\Par{s;\xprev}$ is a procedure that outputs an approximate solution $x$ such that $\E~x = x^\star= \prox(s) = \arg\min_{x\in\xset} F^\lambda_s(x)$, and 
\begin{equation}
\begin{aligned}
\E\norm{x-\xopt}^2 \le & \frac{1}{4\lambda}\Par{\lambda\Veu_{x^\star}\Par{s}+V^F_{x^\star}(\xprev)}+\frac{2\delta}{\lambda}.
\end{aligned}
\end{equation}
\end{definition}

\SetKwProg{Fn}{function}{}{}
\begin{algorithm2e}[h]
	\caption{$\ouralg$ with Additive Error}
	\label{alg:catalyst-delta}
	\DontPrintSemicolon
	\codeInput $F : \xset \rightarrow \R$, $\ApproxProx^\delta$, $\UnbiasedProx^\delta$\;
	{\bfseries Parameter:} $\lambda, R>0$, iteration number $T$, $\alpha_0=1$ \;
	Initialize $x_0\leftarrow \WarmStart_{F,\lambda}(R^2)$ \Comment*{To satisfy $\E~F(x_0)-F(\xopt)\le \lambda R^2$}\label{line:recapp-error-warmstart}
	\For{$t=0$ {\bfseries{\textup{to}}} $T-1$}{
		Update parameters $\alpha_{t+1}\in[0,1]$ to satisfy $\frac{1}{\alpha_{t+1}^2}-\frac{1}{\alpha_{t+1}} = \frac{1}{\alpha_t}$ \;
		$s_t\gets \Par{1-\alpha_{t+1}}x_t + \alpha_{t+1}v_t  $ \;
		$\delta_{t+1} \gets \frac{1}{4t^2}\alpha_t^2\lambda R^2$ \Comment*{Additive error decreases as $O(1/t^4)$ to ensure convergence}
		$x_{t+1}\gets \ApproxProx^{\delta_{t+1}}_{F,\lambda}(s_t; s_t, x_t)$ \;
		$\tx_{t+1}\gets \UnbiasedProx_{F,\lambda}^{\delta_{t+1}}(s_t; x_t)$ \Comment*{We implement this using $\MLMC^{\delta_{t+1}}(F, \lambda,s_t, x_t)$}
		$v_{t+1}\gets \proj_{\xset}\Par{v_t-\frac{1}{\alpha_{t+1}}\Par{s_t-\tx_{t+1}}}$ \;
	}
	\codeReturn $x_T$
	
	\Fn{$\MLMC^\delta(F, \lambda, s, \xprev)$}{
	$\delta_{0}\gets2^{-3}\delta$\;
$x^{(0)}\gets \ApproxProx^{\delta_{0}}_{F,\lambda}(s; s, \xprev)$ \;
	Sample random epoch number $J\sim  1+\geoRV\left(\tfrac{1}{2}\right)\in\{2,3,\cdots\}$ \;
	\For{$j=1$ \codeStyle{to} $J$}{
		$\delta_{j}\gets \frac{1}{4}\delta_{j-1}$\;
		$x^{(j)}\gets \ApproxProx^{\delta_{j}}_{F,\lambda}(s; x^{{(j-1)}}, x^{{(j-1)}})$\;
	}
	\codeReturn $x^{(1)}+2^J(x^{(J)}-x^{(J-1)})$ \;
}
\end{algorithm2e}	

\begin{algorithm2e}[h]
	\caption{Restarted $\ouralg$ with Additive Error}
	\label{alg:catalyst-delta-sc}
	\DontPrintSemicolon
	\codeInput $F : \xset \rightarrow \R$, $\ouralg$ with additive error \;
	{\bfseries Parameter:} $\lambda, R>0$, iteration number $T$, epoch number $K$, $\alpha_0=1$\;
	\label{line:catalyst:warm_start_start-sc-delta}Initialize $x^{(0)}\leftarrow \WarmStart_{F,\lambda}(R^2)$\Comment*{To satisfy $\E~F(x_0)-F(\xopt)\le \lambda R^2$}
	\For{$k=0$ {\bfseries{\textup{to}}} $K-1$}{
	\Comment*{Halving error to true optimizer and recurse}
Run $\ouralg$ (\Cref{alg:catalyst-delta}) on $F$ with $x_0=v_0=x^{(k)}$ without $\WarmStart$ (\Cref{line:recapp-error-warmstart}) for $T$ iterations}
	\codeReturn $x^{(K)}$
\end{algorithm2e}	

With the new definitions of $\delta$-additive proximal oracles and $\delta$-additive unbiased proximal point estimators, we can formally give the guarantee of~\Cref{alg:catalyst-delta} in~\Cref{prop:improved-catalyst-delta}. 

\begin{restatable}[$\ouralg$ with additive error]{proposition}{catalyst-delta}\label{prop:improved-catalyst-delta} For any convex function $F$: $\xset\rightarrow\R$, parameters $\lambda, R>0$, $\ouralg$ with additive error (\Cref{alg:catalyst-delta}) finds $x$ such that $\E F(x) - \min_{x'\in\xset}F(x')\le \eps$ within $O\Par{R\sqrt{\lambda/\eps}}$ iterations.  The algorithm uses one call to $\WarmStart$, and an expectation of oracle complexity 
\begin{align*}
O\Par{\sum_{t\in[T]}\sum_{j=0}^\infty\frac{1}{2^j}\calN \Par{\ApproxProx, F, \lambda,{2^{-2j}\delta_{t}}}},
\end{align*}
where we let $\delta_t = \frac{1}{4t^2}\alpha_t^2\lambda R^2=\Omega(\epsilon^2/(\lambda R^2))$, and use $\calN(\ApproxProx,F,\lambda,\delta)$ to denote some oracle complexity for calling each $\ApproxProx^{\delta}_{F,\lambda}$. 

For any $\gamma$-strongly-convex $F$: $\xset\rightarrow\R$, parameters $\lambda, R>0$, restarted $\ouralg$ (\Cref{alg:catalyst-delta-sc}) finds $x$ such that $\E F(x) - \min_{x'\in\xset}F(x')\le \eps$ , using one call to $\WarmStart$ and an expectation of oracle complexity 
\begin{align*}
O\Par{\sum_{k\in[K]}\sum_{t\in[T]}\sum_{j=0}^\infty\frac{1}{2^j}\calN \Par{\ApproxProx, F, \lambda,{2^{-2j}\delta^{(k)}_{t}}}},
\end{align*}
where $K = O(\log LR^2/\epsilon)$, $T = O\Par{\sqrt{\lambda/\gamma}}$, and $\delta^{(k)}_t = \frac{1}{2^k\cdot 4t^2}\alpha_t^2\lambda R^2 = \Omega\Par{\frac{1}{2^kt^4}\lambda R^2}$ for $t\in[T], k\in[K]$.
\end{restatable}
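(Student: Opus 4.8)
The plan is to retrace the proof of \Cref{thm:improved-catalyst}, now carrying the two additive terms (one from $\ApproxProx^\delta$ and one from $\UnbiasedProx^\delta$) through the potential argument and verifying that the prescribed schedule $\delta_{t+1}=\tfrac14 t^{-2}\alpha_t^2\lambda R^2$ makes their accumulated contribution $O(\lambda R^2)$, hence harmless. Concretely, I would first establish additive-error analogues of \Cref{lem:sapm-step-bound-1,lem:sapm-step-bound-2}. The bound of \Cref{lem:sapm-step-bound-1} is unchanged, since it involves only the exact points $x^\star_{t+1}=\prox(s_t)$ and $v^\star_{t+1}$. In \Cref{lem:sapm-step-bound-2}, the extra $+\delta_{t+1}$ in \eqref{approx-prox-cond-delta} feeds directly into the bound on $\E[F(x_{t+1})\mid\mathcal F_t]$, and the extra $+2\delta_{t+1}/\lambda$ in \Cref{def:unbiased-prox-delta} feeds into $\E[\tfrac{\lambda}{2}\norm{v_{t+1}-v^\star_{t+1}}^2\mid\mathcal F_t]$ via the $\alpha_{t+1}^{-2}$ scaling of the $v$-update. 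Tracking constants exactly as in the exact proof, the net per-step inequality becomes
\[
P_{t+1}\le P_t + C\,\alpha_{t+1}^{-2}\delta_{t+1}
\]
for an absolute constant $C$, where $P_t$ is the same potential as in \Cref{thm:improved-catalyst}.

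Telescoping over $t=0,\dots,T-1$ gives $P_T\le P_0+C\sum_{t<T}\alpha_{t+1}^{-2}\delta_{t+1}$. Using $\alpha_t\in[\tfrac{4/\sqrt3}{t+3},\tfrac{2}{t+2}]$ and $\alpha_t^2/\alpha_{t+1}^2\le 3$ (both recorded in the exact proof), the substitution $\delta_{t+1}=\tfrac14 t^{-2}\alpha_t^2\lambda R^2$ yields $\alpha_{t+1}^{-2}\delta_{t+1}\le \tfrac34 t^{-2}\lambda R^2$, so the accumulated error is $\sum_t\alpha_{t+1}^{-2}\delta_{t+1}=O(\lambda R^2)$ — exactly the summability that the $\delta_t=\Omega(t^{-4}\lambda R^2)$ schedule is engineered to provide. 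Combined with the $\WarmStart$ bound $F(x_0)-F(x')\le\lambda R^2$ this gives $P_T=O(\lambda R^2)$, and multiplying by $\alpha_T^2=O(T^{-2})$ recovers $\E F(x_T)-\min F=O(\lambda R^2/T^2)$; choosing $T=O(R\sqrt{\lambda/\epsilon})$ gives the claimed $\epsilon$-accuracy and iteration count.

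It then remains to verify that the $\MLMC^\delta$ subroutine of \Cref{alg:catalyst-delta} implements $\UnbiasedProx^\delta$ (\Cref{def:unbiased-prox-delta}) and to account for its cost. Following \Cref{lem:MLMC}, I would first derive the biased recursion $e_j\le\tfrac18 e_{j-1}+\delta_j$ for $e_j\defeq\E[\Flam_s(x^{(j)})-\Flam_s(x^\star)]$ with $\delta_j=4^{-j}\delta_0$; unrolling gives $e_j\le 8^{-j}e_0+2\delta_0 4^{-j}\to 0$, so $\E x^{(j)}\to x^\star$ and the single-sample telescoping estimator returned by $\MLMC^\delta$ is unbiased by the standard importance-reweighting identity (the bias of $\ApproxProx$ is irrelevant, since only the limit of the $x^{(j)}$ enters). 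For the variance I would bound $\E\norm{x^{(j)}-x^{(j-1)}}^2=O((8^{-j}e_0+4^{-j}\delta_0)/\lambda)$ and sum against the importance weights $p_j^{-1}=\Theta(2^{j})$; here the decay rate $\tfrac14$ of $\delta_j$ is precisely what is needed for $\sum_j 2^{j}4^{-j}\delta_0=\sum_j 2^{-j}\delta_0=O(\delta_0)$ to converge, producing the extra additive variance term $O(\delta/\lambda)$ demanded by \Cref{def:unbiased-prox-delta}. For the oracle count, loop level $j$ executes iff $J\ge j$, which has probability $\Theta(2^{-j})$, and that call runs $\ApproxProx$ at additive accuracy $\Theta(2^{-2j}\delta_t)$; summing over levels and over outer iterations $t$ gives the stated $\sum_t\sum_{j\ge0}2^{-j}\calN(\ApproxProx,F,\lambda,2^{-2j}\delta_t)$.

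For the $\gamma$-strongly-convex claim I would mirror \Cref{prop:catalyst-sc}: run \Cref{alg:catalyst-delta} for $T=O(\sqrt{\lambda/\gamma})$ inner iterations per epoch and prove by induction on the epoch index $k$ that $\E[F(x^{(k)})-F(x')+\tfrac{\lambda}{2}\norm{x^{(k)}-x'}^2]\le 3\cdot 2^{-(k-1)}\lambda R^2$, using $\gamma$-strong convexity to convert the halved function error into a halved distance. The per-epoch schedule $\delta^{(k)}_t=2^{-k}\tfrac14 t^{-2}\alpha_t^2\lambda R^2$ scales down with the per-epoch potential $\Theta(2^{-k}\lambda R^2)$, so the Step-2 summability estimate keeps the accumulated additive error a fixed fraction of that potential and the induction closes; taking $K=O(\log(\lambda R^2/\epsilon))$ epochs gives $\epsilon$-accuracy, and the oracle count is the per-epoch count of the previous step summed over $k$. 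The main obstacle is the $\MLMC^\delta$ analysis: one must simultaneously preserve unbiasedness despite a biased base oracle and keep the variance series convergent, and this hinges on the delicate balance between the $\tfrac12$ epoch-termination probability and the $\tfrac14$ additive-error decay — a slower decay of $\delta_j$ would make $\sum_j 2^{j}\delta_j/\lambda$ diverge and destroy the unbiased-bounded-variance guarantee.
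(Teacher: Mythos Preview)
Your proposal is correct and follows essentially the same route as the paper: carry the additive $\delta_{t+1}$ from $\ApproxProx^\delta$ and the $2\delta_{t+1}/\lambda$ from $\UnbiasedProx^\delta$ through the two-lemma decomposition (\Cref{lem:sapm-step-bound-1} unchanged, \Cref{lem:sapm-step-bound-2} acquiring the extra terms), obtain the per-step bound $P_{t+1}\le P_t+O(\alpha_{t+1}^{-2}\delta_{t+1})$, telescope, and use the $\delta_t$ schedule to make the accumulated error $O(\lambda R^2)$; the $\MLMC^\delta$ correctness/cost argument and the strongly-convex restart are handled exactly as you outline. The paper in fact pins down the constant as $2\delta_{t+1}/\alpha_{t+1}^2$ and uses the matched-index form $\delta_t=\tfrac{1}{4t^2}\alpha_t^2\lambda R^2$ so that $\sum_t 2\delta_t/\alpha_t^2=\sum_t\tfrac{1}{2t^2}\lambda R^2\le \lambda R^2$ directly, without invoking the ratio bound $\alpha_t^2/\alpha_{t+1}^2\le 3$ you use, but this is a cosmetic difference.
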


To prove the correctness of~\Cref{prop:improved-catalyst-delta}, we first show in~\Cref{lem:MLMC-delta} that $\MLMC^\delta$ implements an $\UnbiasedProx^\delta$ for given $F$, $\lambda>0$, with the corresponding inputs. In comparison with the $\delta=0$ case presented in~\Cref{sec:framework}, the key difference is we need to ensure when we sample a large index $j$ (with tiny probability), the algorithm calls $\ApproxProx^{\delta_j}$ to smaller additive error $\delta_j\approx \Theta(4^{-j})\cdot\delta$, so as to ensure it contributes in total a finite $O(\delta)$ additive term in the variance.

\begin{lemma}[$\MLMC$ turns $\ApproxProx^{O(\delta)}$ into $\UnbiasedProx^\delta$]\label{lem:MLMC-delta}
Given convex $F$: $\xset\rightarrow\R$, $\lambda>0$, $s,\xprev\in\xset$, function~$\MLMC^\delta(F,\lambda,s,\xprev)$ in~\Cref{alg:catalyst-delta} implements $\UnbiasedProx^\delta_{F,\lambda}(s;\xprev)$. Denote $\calN \Par{\ApproxProx, F, \lambda,{2^{-3}\delta}}$ as some oracle complexity for calling each $\ApproxProx^{\delta}_{F,\lambda}$, then the oracle complexity $\calN$ for $\UnbiasedProx^\delta_{F,\lambda}$ is $\E\calN = \calN \Par{\ApproxProx, F, \lambda,{2^{-3}\delta}}+\calN \Par{\ApproxProx, F, \lambda,{2^{-5}\delta}}+ \sum_{j=2}^\infty \frac{1}{2^{j-2}}  \calN \Par{\ApproxProx, F, \lambda,{2^{-(3+2j)}\delta}}$.
\end{lemma}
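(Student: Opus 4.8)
The plan is to adapt the proof of \Cref{lem:MLMC} to the additive-error setting, the one essential new point being that the geometric schedule $\delta_j=\tfrac14\delta_{j-1}$ is tuned so that the additive errors, once amplified by the MLMC weights, still sum to $O(\delta)$. Write $\xopt=\prox(s)$, $E_{\mathrm{b}}\defeq\lambda\Veu_{\xopt}\Par{s}+V^F_{\xopt}(\xprev)$, and $E_j\defeq\E\brk*{F^\lambda_s(x^{(j)})-F^\lambda_s(\xopt)}$. First I would combine the $\ApproxProx^{\delta_j}$ guarantee (\Cref{def:approx-prox-delta}) with the optimality identity $\lambda\Veu_{\xopt}(z)+V^F_{\xopt}(z)=V^{F^\lambda_s}_{\xopt}(z)\le F^\lambda_s(z)-F^\lambda_s(\xopt)$ used in \eqref{eq:MLMC_induct} to obtain $E_0\le\tfrac18 E_{\mathrm{b}}+\delta_0$ and $E_j\le\tfrac18 E_{j-1}+\delta_j$ for $j\ge1$. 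Unrolling with $\delta_j=4^{-j}\delta_0$ gives $E_j\le 8^{-(j+1)}E_{\mathrm{b}}+O(4^{-j}\delta_0)$, and $\lambda$-strong convexity of $F^\lambda_s$ converts this into $\E\norm{x^{(j)}-\xopt}^2\le\tfrac2\lambda E_j$.

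Unbiasedness then follows exactly as in \Cref{lem:MLMC}: since both the geometric and the additive parts of $E_j$ vanish as $j\to\infty$, we have $\E x^{(j)}\to\xopt$, so the telescoping identity $\xopt=\lim_j\E x^{(j)}$ together with the fact that the sampling law of $J$ cancels the weight applied to the increment $x^{(J)}-x^{(J-1)}$ gives $\E\hat x=\xopt$. It is precisely here that $\delta_j\to0$ is indispensable; a fixed additive tolerance would leave an irreducible bias.

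The crux, and the step I expect to be the main obstacle, is the variance bound. Following the computation in \Cref{lem:MLMC} I would split $\E\norm{\hat x-\xopt}^2$ into a contribution from the anchor term and a contribution $\propto\sum_{k}\P[J=k]\,w_k^2\,\E\norm{x^{(k)}-x^{(k-1)}}^2$ from the randomized increment, and bound $\E\norm{x^{(k)}-x^{(k-1)}}^2\le\tfrac4\lambda(E_k+E_{k-1})$. Each $E_k$ splits into a geometric piece $\propto 8^{-k}E_{\mathrm{b}}$ and an additive piece $\propto 4^{-k}\delta_0$. Against the MLMC weight $\P[J=k]\,w_k^2\propto 2^{k}$, the geometric piece produces the convergent series $\sum_k 2^{k}8^{-k}$, which---because the estimator is anchored deep enough to carry the small $8^{-j_0}$ prefactor of \Cref{lem:MLMC}---contributes at most $\tfrac1{4\lambda}E_{\mathrm{b}}$, while the additive piece produces $\sum_k 2^{k}4^{-k}=\sum_k 2^{-k}<\infty$, contributing $O(\delta_0/\lambda)=O(\delta/\lambda)$. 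The delicate point is that $\delta_j$ must decay at rate $\tfrac14$, strictly faster than the $\tfrac12$ amplification rate of the weights, for this additive series to converge at all; choosing $\delta_0=2^{-3}\delta$ then tunes the constant so that the additive contribution is at most $\tfrac{2\delta}\lambda$. Combining the two pieces yields $\E\norm{\hat x-\xopt}^2\le\tfrac1{4\lambda}E_{\mathrm{b}}+\tfrac{2\delta}\lambda$, matching \Cref{def:unbiased-prox-delta}.

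Finally, the oracle complexity follows by linearity over levels. The point $x^{(0)}$ is always computed, at tolerance $\delta_0=2^{-3}\delta$, and $x^{(j)}$ is computed precisely when $J\ge j$, at tolerance $\delta_j=2^{-(2j+3)}\delta$. Since $J\ge2$ almost surely, $x^{(1)}$ (tolerance $2^{-5}\delta$) is always computed, while $\P[J\ge j]=2^{-(j-2)}$ for $j\ge2$. Summing $\calN(\ApproxProx,F,\lambda,\delta_j)$ against these probabilities reproduces the stated expectation $\calN(\ApproxProx,F,\lambda,2^{-3}\delta)+\calN(\ApproxProx,F,\lambda,2^{-5}\delta)+\sum_{j=2}^\infty 2^{-(j-2)}\,\calN(\ApproxProx,F,\lambda,2^{-(2j+3)}\delta)$.
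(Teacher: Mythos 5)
Your proposal is correct and follows essentially the same route as the paper's proof of \Cref{lem:MLMC-delta}: the recursion $E_j\le\tfrac18E_{j-1}+\delta_j$ unrolled into a geometric piece $8^{-(j+1)}E_{\mathrm{b}}$ plus an additive piece $O(4^{-j}\delta)$, unbiasedness via telescoping (where $\delta_j\to0$ is indeed what kills the bias), and the variance split in which the level weights $\propto 2^k$ meet the convergent series $\sum_k 2^k8^{-k}$ and $\sum_k 2^k4^{-k}$, with $\delta_0=2^{-3}\delta$ tuning the additive contribution to $2\delta/\lambda$. Your complexity accounting (always compute levels $0$ and $1$, level $j\ge2$ with probability $2^{-(j-2)}$ at tolerance $2^{-(2j+3)}\delta$) reproduces the paper's expected-cost expression exactly.
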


\begin{proof}[Proof of~\Cref{lem:MLMC-delta}]
	Let $\xopt = \arg\min_{x\in\xset}F^\lambda_s(x)$, by definition of $\ApproxProx^\delta$, we have  
	\begin{align*}
		\text{for}~~j=0,~~\E~\left[ \frac{\lambda}{2}\norm{x^{(0)}-\xopt}^2\right]  & \le\E~\Flam_s\Par{x^{(0)}} - \Flam_s\Par{x^\star} \le \frac{1}{8}~\Par{\lambda\Veu_{x^\star}\Par{\xinit}+V^F_{x^\star}(\xprev)}+\frac{\delta}{8},\\
		\text{for}~~j\ge1,~~\E~\left[ \frac{\lambda}{2}\norm{x^{(j)}-\xopt}^2\right] &  \stackrel{(i)}{\le} \E~V^{F^\lambda_s}_{\xopt}\Par{x^{(j)}}  \le\E~\Flam_s\Par{x^{(j)}} - \Flam_s\Par{x^\star} \\
		& \le \frac{1}{8}~\E \Par{\lambda\Veu_{x^\star}\Par{x^{(j-1)}}+V^F_{x^\star}(x^{(j-1)})}+\frac{\delta}{2\cdot4^{j+1}}\\
		& \stackrel{(ii)}{\le} \frac{1}{8}~ \E\Par{V^{F^\lambda_s}_{x^\star}(x^{(j-1)})}+\frac{\delta}{2\cdot4^{j+1}}\\
		& \stackrel{}{\le} \frac{1}{8}~\E\Par{F^\lambda_s\Par{x^{(j-1)}}-F^\lambda_s\Par{\xopt}}+\frac{\delta}{2\cdot4^{j+1}}\\
		& \stackrel{(iii)}{\le}  \Par{\frac{1}{8}}^{j+1} \Par{\lambda\Veu_{x^\star}\Par{\xinit}+V^F_{x^\star}(\xprev)} + \delta\Par{\sum_{j'=0}^j\frac{1}{2\cdot 8^{j'}\cdot 4^{j+1}}},
	\end{align*}
	where we use $(i)$ the optimality of $\xopt$ which implies $\inprod{\nabla F(\xopt)}{x-\xopt}\ge 0$ for any $x\in\xset$,  $(ii)$ the equality that $\norm{a-b}^2+\norm{b-c}^2-2\inprod{c-b}{a-b} = \norm{a-c}^2$,  $(iii)$ the induction over $j$.
	
	In conclusion, this shows that $\E x^{(j)}\rightarrow \xopt$ as $j\rightarrow\infty$, and thus by choice of $p_j = 1/2^{j-1}$ for $j\ge2$, the algorithm returns a point $x$ satisfying
	\[
	\E~x = \E_J\left[x^{(1)}+2^J(x^{(J)}-x^{(J-1)})\right] = \lim_{j\rightarrow\infty} x^{(j)} = \xopt,
	\]
	which shows the output is an unbiased estimator of $\xopt$. 
	
	For the variance, we have by a same calculation as in the proof of~\Cref{lem:MLMC},
	\begin{align*}
	\E & \norm{x^{(1)}+2^J\left(x^{(J)}-x^{(J-1)}\right)-\xopt}^2 \le \frac{5}{2}\cdot\E\left[\norm{x^{(1)}-\xopt}^2\right]+ \sum_{j=2}^{\infty} \frac{9}{2}\cdot2^{j}\cdot\E\left[\norm{x^{(j)}-\xopt}^2\right]\\
	 & \quad\quad\quad\le \frac{2}{\lambda}\Par{\frac{5}{2}\Par{\frac{1}{8}}^{2}+\sum_{j=2}^\infty\frac{9/8}{2}\Par{\frac{1}{4}}^j}\Par{\lambda\Veu_{x^\star}\Par{\xinit}+V^F_{x^\star}(\xprev)}+\frac{5\delta}{16\lambda}+ \frac{9\delta}{8\lambda}\sum_{j=2}^\infty\frac{5}{4}\cdot\frac{1}{2^j}\\
	 & \quad\quad\quad\le \frac{1}{4\lambda}\Par{\lambda\Veu_{x^\star}\Par{\xinit}+V^F_{x^\star}(\xprev)}+\frac{2\delta}{\lambda},
	\end{align*}
	which proves the bound as claimed.
	
The query complexity is in expectation
\begin{align*}
& \E\calN = \sum_{j=2}^\infty \frac{1}{2^{j-1}}   \sum_{j'=0}^j\calN \Par{\ApproxProx, F, \lambda,{2^{-(3+2j')}\delta}} = \calN \Par{\ApproxProx, F, \lambda,{2^{-3}\delta}}\\
& \quad\quad\quad+\calN \Par{\ApproxProx, F, \lambda,{2^{-5}\delta}}+ \sum_{j=2}^\infty \frac{1}{2^{j-2}}  \calN \Par{\ApproxProx, F, \lambda,{2^{-(3+2j)}\delta}} .	
\end{align*}
\end{proof}

This shows that we can implement $\UnbiasedProx^\delta$ using $\ApproxProx^\delta$, similar to the case without additive error $\delta$, as in~\Cref{lem:MLMC}. Now we are ready to provide a complete proof of~\Cref{prop:improved-catalyst-delta}, which shows the correctness and complexity of~\Cref{alg:catalyst-delta}. 

\begin{proof}[Proof of~\Cref{prop:improved-catalyst-delta}]

First of all we recall the notation of filtration  
$\filt = \sigma(x_1, v_1, \ldots, x_t, v_t)$, $\xopt_{t} =\arg\min_{x\in\xset} F^\lambda_{s_{t-1}}(x)$, $\gopt_{t+1}=\lambda\left(s_{t}-\xopt_{t+1}\right)$, $v_{t+1}^\star = v_t- (\alpha_{t+1})^{-1} (s_t - \xopt_{t+1})$ and $x'$ as the minimizer of $F:\xset\rightarrow\R$ (see~\Cref{apdx:framework} for more detailed discussion).

The majority of the proof still lies in showing the potential decreasing lemma as in~\Cref{lem:sapm-step-bound}, while also taking into account the extra additive error $\delta$ when implementing oracles $\ApproxProx^\delta$ and $\UnbiasedProx^\delta$.

Following~\eqref{eq:catalyst-eq-1}, we recall the inequality that 
	\begin{flalign}
& \frac{1}{\alpha_{t+1}^2}\left(F\left(\xopt_{t+1}\right)-F\left(x'\right)\right)+\frac{\lambda}{2}\norm{v_{t+1}^\star-x'}^2\nonumber\\
  & \hspace{4em} \le \frac{1}{\alpha_{t}^2}\prn*{F(x_{t})-F(x')}+\frac{\lambda}{2}\norm{v_{t}-x'}^2-\frac{\lambda}{\alpha^2_{t+1}}\Veu_{\xopt_{t+1}}\Par{s_t}-\frac{1}{\alpha_{t}^2}V^F_{\xopt_{t+1}}\Par{x_t}.\label{eq:catalyst-eq-1-delta}
	\end{flalign}
	Thus, by definition of $x_{t+1}$, $\delta_{t+1}$ and $\ApproxProx^\delta$ we have that
	\begin{flalign*}
	\Ex*{F\left(x_{t+1}\right)|\filt}& \le\Ex*{F\left(x_{t+1}\right)+\frac{\lambda}{2}\left\Vert x_{t+1}-s_t\right\Vert ^{2}|\filt} \\
	& \le F\left(\xopt_{t+1}\right)+\frac{7}{8}\lambda\Veu_{\xopt_{t+1}}\Par{s_t} + \frac{5}{24} V^F_{x_{t+1}^\star}(x_t)+\delta_{t+1}
	\end{flalign*}
Similarly to~\eqref{eq:catalyst-eq-4} and its analysis, we also have by definition of $\UnbiasedProx^\delta$ that 
\begin{flalign*}
\Ex*{\frac{\lambda}{2}\norm{v_{t+1}-x'}^2|\filt} & = \norm{v_{t+1}^\star-x'}^2+\Ex*{\frac{\lambda}{2}\norm{v_{t+1}-v^\star_{t+1}}^2|\filt}\\
& \le \frac{\lambda}{2}\norm{v_{t+1}^\star-x'}^2+\frac{\lambda}{2\alpha_{t+1}^2}\Ex*{\norm{\tx_{t+1}-x^\star_{t+1}}^2|\filt}\\
& \le \frac{\lambda}{2}\norm{v_{t+1}^\star-x'}^2+\frac{\lambda}{2\alpha_{t+1}^2}\Par{\frac{\lambda \Veu_{x^\star_{t+1}}(s_t)}{4\lambda}+\frac{V^F_{x^\star_{t+1}}(x_t)}{4\lambda}+\frac{2\delta_{t+1}}{\lambda}}.
\end{flalign*}

Plugging these back into~\eqref{eq:catalyst-eq-1-delta}, we conclude that
	\begin{flalign*}
		& \frac{1}{\alpha_{t+1}^2}\left(\Ex*{F\left(x_{t+1}\right)|\filt}-F\left(x'\right)\right) +\frac{\lambda}{2}\Ex*{\left\Vert v_{t+1}-x'\right\Vert ^{2}|\filt}\\
		& \hspace{15em} \le \frac{1}{\alpha_{t}^2}\prn*{F(x_{t})-F(x')}+ \frac{\lambda}{2}\left\Vert v_{t}-x'\right\Vert ^{2}+\frac{2\delta_{t+1}}{\alpha_{t+1}^2}.
	\end{flalign*}
Recursively applying this bound for $t=0,1,\cdots, T-1$ and together with the $\WarmStart$ guarantee we have
\begin{flalign*}
& \frac{1}{\alpha_T^2} \left(\Ex*{F\left(x_{t+1}\right)|\filt}-F\left(x'\right)\right) +\frac{\lambda}{2}\Ex*{\left\Vert v_{t+1}-x'\right\Vert ^{2}|\filt}\\
& \hspace{15em} \le \frac{1}{\alpha_{0}^2}\prn*{F(x_{0})-F(x')}+ \frac{\lambda}{2}\left\Vert x_{0}-x'\right\Vert ^{2}+\sum_{t\in[T]}\frac{2\delta_{t}}{\alpha_{t}^2}\\
& \hspace{3em} \implies \frac{1}{\alpha_T^2}\left(\Ex*{F\left(x_{t+1}\right)|\filt}-F\left(x'\right)\right) \le \frac{3}{2}\lambda R^2+\sum_{t\in[T]}\frac{2\delta_{t}}{\alpha_{t}^2}\le 4\lambda R^2\\
& \hspace{3em} \implies \left(\Ex*{F\left(x_{t+1}\right)|\filt}-F\left(x'\right)\right)\le \epsilon,
\end{flalign*}
where we use the choice of $\delta_t = \frac{1}{4t^2}\alpha_t^2\lambda R^2$ and that $\sum_{t\in[T]}\frac{1}{t^2} \le \pi^2/6\le 2$. This shows the correctness of the algorithm.

The algorithm uses $O(1)$ call to $\WarmStart$. At each iteration $t+1$, by guarantee of implementing $\UnbiasedProx^\delta$ using $\MLMC$ in~\Cref{lem:MLMC-delta}, we have the query complexity with respect to $\ApproxProx^\delta$ is in expectation
\begin{align*}
	\calN \Par{\ApproxProx, F, \lambda,\delta_{t+1}}+\calN \Par{\ApproxProx, F, \lambda,{2^{-3}\delta_{t+1}}}+\calN \Par{\ApproxProx, F, \lambda,{2^{-5}\delta_{t+1}}}\\+ \sum_{j=2}^\infty \frac{1}{2^{j-2}}  \calN \Par{\ApproxProx, F, \lambda,{2^{-(3+2j)}\delta_{t+1}}} = O\Par{\sum_{j=0}^\infty\frac{1}{2^j}\calN \Par{\ApproxProx, F, \lambda,{2^{-2j}\delta_{t+1}}}}
\end{align*}
which implies the total oracle complexity through calling $\ApproxProx^\delta$ by summing over $t = 0,1,\cdots, T-1$.

The strongly-convex case follows by a similar analysis as in the proof of~\Cref{prop:catalyst-sc}. We show by in duction
\begin{equation}\label{eq:catalyst-delta-sc-guarantee}
	\E\left[F\Par{x^{(k)}}-F(x')+\frac{\lambda}{2}\norm{x^{(k)}-x'}^2\right]\le \frac{4}{2^{k-1}}\lambda R^2,~\text{for}~k=0,1,\cdots,K,
	\end{equation}
taking into account that by choice of $\delta^{(k+1)}_{t+1}$, the contribution of the additive errors is always bounded by $\frac{1}{2^k}\lambda R^2$. This choice also implies the expected oracle complexity due to calling $\ApproxProx^{\delta_t^{(k)}}$ differently at each epoch and iteration. 
\end{proof}

The additive errors allowed by this framework are helpful to the task of minimizing the max-structured convex objective $F(x) = \max_{y\in\yset}f(x,y)$. This is because we can then use accelerated gradient descent to solve~$\max_yf(x,y)$ for the best-response oracle needed in~\Cref{line:minimax-br-agd} to high accuracy before calling calling~\Cref{alg:mirror-prox} , and show that $\MirrorProx$ formally implements a $\ApproxProx^\delta$. The resulting gradient complexity has an extra logarithmic term on $\delta$, but only shows up on a low-order $\widetilde{O}(\sqrt{L/\mu})$ terms.

\begin{corollary}[Implementation of~$\ApproxProx^\delta$ for minimizing max-structured function]\label{coro:approxprox-minimax-delta}
Given the minimization of max-structured problem in~\eqref{def:problem-minimax}, a centering point $s$, points $\xinit$, $\xprev$, one can use accelerated gradient descent to solve to additive error $\delta$ for~\eqref{line:minimax-br-agd}  and use ~\Cref{lem:approxprox-minimax} to implement the procedure $\ApproxProx^\delta_{F,\mu}(s;\xinit,\xprev)$. It uses a total of $O\Par{L/\mu+\sqrt{L/\mu}\log(L(R')^2/\delta)}$ gradient queries.
\end{corollary}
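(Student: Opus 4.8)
The plan is to closely follow the argument of \Cref{lem:approxprox-minimax}, but to replace the exact best-response initialization $\yinit = \brorcl_F(\xprev)$ with an approximate maximizer $\yinit'$ computed by accelerated gradient descent, and then to track how the resulting inexactness contributes an additive error of order $\delta$. Concretely, I would first apply AGD~\citep{nesterov1983method} to the $\mu$-strongly-concave, $L$-smooth function $f(\xprev,\cdot)$ to produce a point $\yinit'$ with $f(\xprev,\ybr_{\xprev}) - f(\xprev,\yinit') \le 8\delta$. By $L$-smoothness over the $R'$-bounded domain $\yset$, the initial suboptimality is $O(L(R')^2)$, so AGD reaches this accuracy in $O(\sqrt{L/\mu}\log(L(R')^2/\delta))$ gradient queries. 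I would then run $\MirrorProx$ on $\phi(x,y) = f(x,y) + \mu\Veu_s(x)$ initialized at $(\xinit, \yinit')$ for $T = O(L/\mu)$ steps; this costs $O(L/\mu)$ gradient queries, and crucially the mirror-prox guarantee~\eqref{eq:mirror-prox-non-sm} is itself unaffected, since the iterations use exact gradients of $f$ and the only change from \Cref{lem:approxprox-minimax} is the initial point $y_0 = \yinit'$.

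The heart of the argument is the observation that the entire derivation of \Cref{lem:approxprox-minimax} up through~\eqref{eq:minimax-almost-final} never uses that $\yinit$ is an exact best response; it uses only the mirror-prox guarantee and triangle inequalities. Hence, with $\yinit'$ in place of $\yinit$, one still obtains (for $\lambda=\mu$)
\[
	F^\mu_s\Par{x_T} - F^\mu_s\Par{\xopt} \le \tfrac{1}{16}\mu\Veu_{\xopt}\Par{\xinit} + \tfrac{1}{16}\mu\Veu_{\yinit'}\Par{\yopt}.
\]
Only the final bound on $\mu\Veu_{\yinit'}\Par{\yopt}$ must be re-derived. By $\mu$-strong-concavity of $f(\xprev,\cdot)$ and the approximate optimality of $\yinit'$ I have $\mu\Veu_{\yinit'}\Par{\ybr_{\xprev}} \le f(\xprev,\ybr_{\xprev}) - f(\xprev,\yinit') \le 8\delta$; combining this with the exact-best-response bound $\mu\Veu_{\ybr_{\xprev}}\Par{\yopt}\le V^F_{\xopt}\Par{\xprev}$ already established inside the proof of \Cref{lem:approxprox-minimax}, via the triangle inequality $\Veu_{\yinit'}\Par{\yopt}\le 2\Veu_{\ybr_{\xprev}}\Par{\yopt}+2\Veu_{\yinit'}\Par{\ybr_{\xprev}}$, gives $\mu\Veu_{\yinit'}\Par{\yopt} \le 2V^F_{\xopt}\Par{\xprev} + 16\delta$. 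Substituting,
\[
	F^\mu_s\Par{x_T} - F^\mu_s\Par{\xopt} \le \tfrac{1}{16}\mu\Veu_{\xopt}\Par{\xinit} + \tfrac{1}{8}V^F_{\xopt}\Par{\xprev} + \delta \le \tfrac{1}{8}\Par{\mu\Veu_{\xopt}\Par{\xinit} + V^F_{\xopt}\Par{\xprev}} + \delta,
\]
which is exactly the $\ApproxProx^\delta_{F,\mu}(s;\xinit,\xprev)$ criterion of \Cref{def:approx-prox-delta}. Summing the two costs yields the claimed $O(L/\mu + \sqrt{L/\mu}\log(L(R')^2/\delta))$ gradient queries.

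The main obstacle I anticipate is the bookkeeping around the constant factors: the triangle inequality inflates $V^F_{\xopt}\Par{\xprev}$ by a factor of two, so the argument closes only because the exact analysis of \Cref{lem:approxprox-minimax} attains $\tfrac{1}{16}$ rather than the $\tfrac{1}{8}$ demanded by the error criterion, leaving precisely the slack needed to absorb this blowup. Confirming that this slack genuinely survives the earlier Cauchy–Schwarz steps of the original proof (and choosing the best-response accuracy $8\delta$ so that the additive term lands at exactly $\delta$) is the delicate part; the dependence on $\delta$ is otherwise benign, entering only logarithmically and only through the lower-order AGD term.
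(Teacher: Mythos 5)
Your proposal is correct and matches the paper's own proof essentially step for step: AGD to compute an approximate best response to $\xprev$, reuse of the analysis of \Cref{lem:approxprox-minimax} up through~\eqref{eq:minimax-almost-final} (which indeed never uses exactness of $\yinit$), and the same triangle-inequality split $\Veu_{\yinit'}\Par{\yopt}\le 2\Veu_{\ybr_{\xprev}}\Par{\yopt}+2\Veu_{\yinit'}\Par{\ybr_{\xprev}}$ absorbed by the $\tfrac{1}{16}$ slack. Your only deviation is explicit constant bookkeeping (targeting AGD accuracy $8\delta$ rather than $\Theta(\delta)$), which is immaterial.
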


\begin{proof}[Proof of~\Cref{coro:approxprox-minimax-delta}]
	Given the initialization $\xinit$, we first use accelerated gradient descent~\citet{nesterov1983method} to find a $\delta$-approximate solution of $\max_{y\in\yset}f(\xprev,y)$ (which we set to be $\yinit$). We recall the definition of $\ybr_{\xprev}\defeq \arg\max_{y\in\yset}f(\xprev,y)$ and thus
	\begin{equation}\label{eq:minimax-opt-of-yinit-delta}
	\begin{aligned}
		\mu\Veu_{\yinit}\Par{\ybr_{\xprev}} \le f(\xprev,\yinit)-f(\xprev,\ybr_{\xprev}) \le \delta
	\end{aligned}
	\end{equation}
	using $O(\sqrt{L/\mu}\log(L(R')^2/\delta))$ gradient queries.
	
Then, we invoke $\MirrorProx$ with $\phi(x,y) = f(x,y) + \frac{\mu}{2}\norm{x-s}^2$, initial point $\xinit, \yinit$, and number of iterations $T = \frac{64(L+\mu)}{\mu} $. The rest of the proof is essentially the same as in~\Cref{lem:approxprox-minimax}, with the only exception that when bounding RHS of~\eqref{eq:minimax-almost-final}, we note that by choice of $\yinit$ and the error bound in~\eqref{eq:minimax-opt-of-yinit-delta}, it becomes
\begin{align*}
& \frac{\mu}{2}\Veu_{\yinit}\Par{\yopt}\le \mu\Veu_{\ybr_{\xprev}}\Par{\yopt} +  \mu\Veu_{\yinit}\Par{\ybr_{\xprev}}\le  f\Par{\xprev,\ybr_{\xprev}} - f\Par{\xprev,\yopt}+ \delta\le  V^F_{\xopt}\Par{\xprev}+\delta.
\end{align*}

Plugging this new bound with additive error $\Theta(\delta)$ back in~\eqref{eq:minimax-almost-final}, we obtain 
\begin{align*}
F_s^\mu\Par{x_K}-F_s^\mu\Par{x^\star} & \le \frac{\mu}{16}\Veu_{\xinit}\Par{\xopt}+\frac{1}{8}V^F_{\xopt}\Par{\xprev}+\delta \le \frac{1}{8}\Par{\mu\Veu_{\xopt}\Par{\xinit}+V^F_{\xopt}\Par{\xprev}}+\delta.
\end{align*}

Thus the procedure implements $\ApproxProx^\delta_{F,\mu}(s;\xinit,\xprev)$. The total gradient complexity is the  complexity in $\MirrorProx$ same as~\Cref{lem:minimax-warm-start} plus the extra complexity in implementing $\brorcl_F\Par{\cdot}$ using accelerated gradient descent, which sums up to $O\Par{L/\mu+\sqrt{L/\mu}\log\Par{L(R')^2/\delta}}$ as claimed.
\end{proof}

\thmminimax*

\begin{proof}[Proof of~\Cref{thm:minimax-delta}]

For the non-strongly-convex case, $T = O\Par{R\sqrt{\mu/\epsilon}}$, the correctness of the algorithm follows directly from the non-strongly-convex case of~\Cref{prop:improved-catalyst-delta}, together with~\Cref{coro:approxprox-minimax-delta} and~\Cref{lem:minimax-warm-start}. For the query complexity, calling $\WarmStart$-$\textsc{Minimax}$  to implement the procedure of $\WarmStart$ to $\mu R^2$ error requires  $O\Par{L/\mu\log(L/\mu)+\sqrt{L/\mu}\log\Par{R'/R}}$ gradient queries by~\Cref{lem:minimax-warm-start}. Following~\Cref{prop:improved-catalyst-delta}, denote $\calN \Par{\ApproxProx, F, \mu,\delta}$ to be the gradient complexity of implementing $\ApproxProx^\delta_{F,\mu}$: we have $\calN \Par{\ApproxProx, F, \mu,\delta}=O\Par{L/\mu+\sqrt{L/\mu}\log(L(R')^2/\delta)}$ by~\Cref{coro:approxprox-minimax-delta}. Consequently, the total gradient complexity for implementing all $\ApproxProx^\delta$ is in expectation 
\begin{align*}
O\Par{\sum_{t\in[T]}\sum_{j=0}^\infty\frac{1}{2^j}\calN \Par{\ApproxProx, F, \mu,{2^{-2j}\delta_{t}}}} & = O\Par{R\sqrt{\frac{\mu}{\epsilon}}\Par{\frac{L}{\mu}+\sqrt{\frac{L}{\mu}}\log\Par{\frac{L(R'+R)^2}{\epsilon}}}}\\
& = O\Par{\frac{LR}{\sqrt{\mu\epsilon}}+\sqrt{\frac{LR^2}{\epsilon}}\log\Par{\frac{L(R'+R)^2}{\epsilon}}},
\end{align*}
where we use $\delta_t\ge \Omega\Par{\frac{1}{T^4}\mu R^2}
=\Omega(\epsilon/\sqrt{T})$ and choice of $T=O\Par{R\sqrt{\mu/\epsilon}}$.

Summing the gradient query complexity from both $\WarmStart$ and $\ApproxProx^\delta$ procedures give the final complexity.	

For the $\gamma$-strongly-convex case, the correctness of the algorithm follows directly from the strongly-convex case of~\Cref{prop:improved-catalyst-delta}, together with~\Cref{coro:approxprox-minimax-delta} and~\Cref{lem:minimax-warm-start}. The query complexity for calling one $\WarmStart$-$\textsc{Minimax}$ remains unchanged. Following~\Cref{prop:improved-catalyst-delta}, denote $\calN \Par{\ApproxProx, F, \mu,\delta}$ to be the gradient complexity of implementing $\ApproxProx^\delta_{F,\mu}$: we have $\calN \Par{\ApproxProx, F, \lambda,\delta}=O\Par{L/\mu+\sqrt{L/\mu}\log(L(R')^2/\delta)}$ by~\Cref{coro:approxprox-minimax-delta}. Consequently, the total gradient complexity for implementing all $\ApproxProx^\delta$ is in expectation 
\begin{align*}
& O\Par{\sum_{k\in[K]}\sum_{t\in[T]}\sum_{j=0}^\infty\frac{1}{2^j}\calN \Par{\ApproxProx, F, \mu,{2^{-2j}\delta_{t}^{(k)}}}}\\
& \quad\quad =  O\Par{\sqrt{\frac{\mu}{\gamma}}\log \Par{\frac{LR^2}{\epsilon}}\Par{\frac{L}{\mu}+\sqrt{\frac{L}{\mu}}\log\Par{\frac{\mu L(R'+R)^2}{\epsilon\gamma}}}}\\
& \quad\quad = O\Par{\frac{L}{\sqrt{\mu\gamma}}\log \Par{\frac{LR^2}{\epsilon}}+\sqrt{\frac{L}{\gamma}}\log\Par{\frac{\mu L(R'+R)^2}{\epsilon\gamma}}\log \Par{\frac{LR^2}{\epsilon}}},
\end{align*}
where we use $\delta_t^{(k)}\ge \Omega\Par{\frac{1}{2^K\cdot T^4}\mu R^2}$ and choice of  $K$ and $T$.

\end{proof}

 \notarxiv{
\section{Discussion}\label{sec:discussion}

This paper proposes an improvement of the APPA/Catalyst acceleration framework, providing an efficiently attainable Relaxed Error Criterion for the Accelerated Prox Point method ($\ouralg$) that eliminates logarithmic complexity terms from previous result while maintaining the elegant black-box structure of APPA/Catalyst.

The main conceptual drawback of our proposed framework (beyond its reliance on randomization) is that efficiently attaining our relaxed error criterion requires a certain degree of problem-specific analysis as well as careful subproblem solver initialization. In contrast, APPA/Catalyst rely on more standard and readily available linear convergence guarantees (which of course also suffice for $\ouralg$).

Nevertheless, we believe there are many more situations where efficiently meeting the relaxed criterion is possible. These include variance reduction for min-max problems, smooth min-max problems which are (strongly-)concave in $y$ but not convex in $x$, and problems amenable to coordinate methods. All of these are settings where APPA/Catalyst is effective~\citep{yang2020catalyst,frostig2015regularizing,lin2017catalyst} and our approach can likely be provably better. 

Moreover, even when proving improved rates is difficult, $\ApproxProx$ can still serve as an improved stopping criterion. This motivates further research into practical variants of $\ApproxProx$ that depend only on observable quantities (rather than, e.g.\ the distance to the true proximal point).

 }

\end{document}